\begin{document}

\title[\null]
{Extension of twisted Hodge metrics \\
for K\"ahler morphisms}

\author[\null]{Christophe Mourougane and Shigeharu TAKAYAMA}

\maketitle
\baselineskip=18pt


\theoremstyle{plain}
  \newtheorem{thm}{Theorem}[section]
  \newtheorem{main}[thm]{Main Theorem}
  \newtheorem{defthm}[thm]{Definition-Theorem}
  \newtheorem{prop}[thm]{Proposition}
  \newtheorem{lem}[thm]{Lemma}
  \newtheorem{cor}[thm]{Corollary}
  \newtheorem{conj}[thm]{Conjecture}
  \newtheorem{sublem}[thm]{Sublemma}
  \newtheorem{mainlem}[thm]{Main Lemma}
  \newtheorem{variant}[thm]{Variant}
\theoremstyle{definition}
  \newtheorem{dfn}[thm]{Definition}
  \newtheorem{exmp}[thm]{Example}
  \newtheorem{co-exmp}[thm]{Counter-Example}
  \newtheorem{prob}[thm]{Problem}
  \newtheorem{notation}[thm]{Notation}
  \newtheorem{quest}[thm]{Question}
  \newtheorem{setup}[thm]{Set up}
  \newtheorem{assum}[thm]{Assumption}
\theoremstyle{remark}
  \newtheorem{rem}[thm]{Remark}
  \newtheorem{com}[thm]{Comment}
\renewcommand{\theequation}{\thesection.\arabic{equation}}
\setcounter{equation}{0}

\newcommand{\BC}{{\mathbb{C}}}
\newcommand{\BN}{{\mathbb{N}}}
\newcommand{\BP}{{\mathbb{P}}}
\newcommand{\BQ}{{\mathbb{Q}}}
\newcommand{\BR}{{\mathbb{R}}}

\newcommand{\CalC}{{\mathcal{C}}}
\newcommand{\CalD}{{\mathcal{D}}}
\newcommand{\CE}{{\mathcal{E}}}
\newcommand{\CF}{{\mathcal{F}}}
\newcommand{\CH}{{\mathcal{H}}}
\newcommand{\CI}{{\mathcal{I}}}
\newcommand{\CJ}{{\mathcal{J}}}
\newcommand{\CK}{{\mathcal{K}}}
\newcommand{\CL}{{\mathcal{L}}}
\newcommand{\CM}{{\mathcal{M}}}
\newcommand{\CO}{{\mathcal{O}}}
\newcommand{\CS}{{\mathcal{S}}}
\newcommand{\CU}{{\mathcal{U}}}

\newcommand{\fm}{{\mathfrak{m}}}

\newcommand\ga{\alpha}
\newcommand\gb{\beta}
\def\th{\theta}
\newcommand\vth{\vartheta}
\newcommand\Th{\Theta}
\newcommand\ep{\varepsilon}
\newcommand\Ga{\Gamma}
\newcommand\w{\omega}
\newcommand\Om{\Omega}
\newcommand\Dl{\Delta}
\newcommand\del{\delta}
\newcommand\sg{\sigma}
\newcommand\vph{\varphi}
\newcommand\lam{\lambda}
\newcommand\Lam{\Lambda}

\newcommand\lra{\longrightarrow}
\newcommand\ra{\rightarrow}
\newcommand\ot{\otimes}
\newcommand\wed{\wedge}
\newcommand\ol{\overline}
\newcommand\isom{\, \wtil{\lra}\, }
\newcommand\sm{\setminus}

\newcommand\ai{\sqrt{-1}}
\newcommand\rd{{\partial}}
\newcommand\rdb{{\overline{\partial}}}
\newcommand\levi{\ai\rd\rdb}

\newcommand\wtil{\widetilde}
\newcommand\tf{\widetilde f}
\newcommand\tX{\widetilde X}
\newcommand\tx{\widetilde x}
\newcommand\tw{\widetilde \w}

\newcommand\what{\widehat}
\newcommand\hsg{\widehat \sg}

\newcommand\codim{\mbox{{\rm codim}}}
\newcommand\End{\mbox{{\rm End}}\, }
\newcommand\Exc{\mbox{{\rm Exc}}\, }
\newcommand\im{\mbox{{\rm im}}\, }
\newcommand\Ima{\mbox{{\rm Im}}\, }
\newcommand\Ker{\mbox{{\rm Ker}}\, }
\newcommand\Reg{\mbox{{\rm Reg}}\, }
\newcommand\Sing{\mbox{{\rm Sing}}\, }
\newcommand\Supp{\mbox{{\rm Supp}}\, }

\newcommand\Rq{R^qf_*(K_{X/Y} \ot E)}
\newcommand\F{R^qf_*(K_{X/Y} \ot E)}
\newcommand\FF{R^qf''_*(K_{X''/Y'} \ot E'')}

\newcommand\Xo{{X^\circ}}
\newcommand\Uo{{U^\circ}}
\newcommand\fo{{f^\circ}}
\newcommand\tauo{{\tau^\circ}}
\newcommand\go{{g_{\CO(1)}^\circ}}


\section{Introduction}

The subject in this paper is the positivity of direct image sheaves 
of adjoint bundles $\Rq$, for a K\"ahler morphism $f : X \lra Y$ endowed 
with a Nakano semi-positive holomorphic vector bundle $(E, h)$ on $X$.
In our previous paper \cite{MT2}, 
generalizing a result \cite{B} in case $q = 0$,
we obtained the Nakano semi-positivity of $\Rq$ with respect to 
a canonically attached metric, the so-called Hodge metric,
under the assumption that $f : X \lra Y$ is smooth.
However the smoothness assumption on $f$ is rather restrictive,
and it is desirable to remove it.
This is the aim of this paper.

To state our result precisely, let us fix notations and recall basic facts.
Let $f : X \lra Y$ be a holomorphic map of complex manifolds.
A real $d$-closed $(1,1)$-form $\w$ on $X$ is said to be 
{\it a relative K\"ahler form} for $f$, if for every point $y \in Y$, 
there exists an open neighbourhood $W$ of $y$ and 
a smooth plurisubharmonic function $\psi$ on $W$ such that 
$\w + f^*(\ai\rd\rdb \psi)$ is a K\"ahler form on $f^{-1}(W)$.
A morphism $f$ is said to be {\it K\"ahler}, if there exists 
a relative K\"ahler form for $f$ (\cite[6.1]{Tk}),
and $f : X \lra Y$ is said to be a {\it K\"ahler fiber space},
if $f$ is proper, K\"ahler, and surjective with connected fibers.

\begin{setup} \label{basic}
(General global setting.) \ 
(1)
Let $X$ and $Y$ be complex manifolds of $\dim X = n + m$ and 
$\dim Y = m$, and let $f : X \lra Y$ be a K\"ahler fiber space.
We do not fix a relative K\"ahler form for $f$, unless otherwise stated.
The {\it discriminant locus} of $f : X \lra Y$ is the minimum closed 
analytic subset $\Dl \subset Y$ such that $f$ is smooth over $Y \sm \Dl$.

(2)
Let $(E, h)$ be a Nakano semi-positive holomorphic vector bundle on $X$.
Let $q$ be an integer with $0 \le q \le n$.
By Koll\'ar \cite{Ko1} and Takegoshi \cite{Tk}, 
$\F$ is torsion free on $Y$,
and moreover it is locally free on $Y \sm \Dl$ (\cite[4.9]{MT2}).
In particular we can let $S_q \subset \Dl$ be 
the minimum closed analytic subset of $\codim_Y S_q \ge 2$ such that 
$\F$ is locally free on $Y \sm S_q$.
Let $\pi : \BP(\F|_{Y \sm S_q}) \lra Y \sm S_q$
be the projective space bundle, and let $\pi^*(\F|_{Y \sm S_q}) \lra \CO(1)$
be the universal quotient line bundle.

(3)
Let $\w_f$ be a relative K\"ahler form for $f$.
Then we have the Hodge metric $g$ on the vector bundle 
$\F|_{Y \sm \Dl}$ with respect to $\w_f$ and $h$ (\cite[\S 5.1]{MT2}).
By the quotient 
$\pi^*(\Rq|_{Y \sm \Dl}) \lra \CO(1)|_{\pi^{-1}(Y \sm \Dl)}$,
the metric $\pi^*g$ gives the quotient metric $\go$ on 
$\CO(1)|_{\pi^{-1}(Y \sm \Dl)}$.
The Nakano, even weaker Griffiths, semi-positivity of $g$
(by \cite[1.2]{B} for $q=0$, and by \cite[1.1]{MT2} for $q$ general) 
implies that $\go$ has a semi-positive curvature.
\qed
\end{setup}

In these notations, our main result is as follows
(see also \S \ref{Variants} for some variants).

\begin{thm} \label{MT}
Let $f : X \lra Y$, $(E, h)$ and $0 \le q \le n$ be as in Set up \ref{basic}.

(1) Unpolarized case.
Then, for every relatively compact open subset $Y_0 \subset Y$,
the line bundle $\CO(1)|_{\pi^{-1}(Y_0 \sm S_q)}$ 
on $\BP(\Rq|_{Y_0 \sm S_q})$ has a singular Hermitian metric 
with semi-positive curvature, and which is smooth on $\pi^{-1}(Y_0 \sm \Dl)$.

(2) Polarized case.
Let $\w_f$ be a relative K\"ahler form for $f$.
Assume that there exists a closed analytic set 
$Z \subset \Dl$ of $\codim_Y Z \ge 2$ such that 
$f^{-1}(\Dl)|_{X \sm f^{-1}(Z)}$ is a divisor
and has a simple normal crossing support (or empty).
Then the Hermitian metric $\go$ on $\CO(1)|_{\pi^{-1}(Y \sm \Dl)}$
can be extended as a singular 
Hermitian metric $g_{\CO(1)}$ with semi-positive curvature
of $\CO(1)$ on $\BP(\Rq|_{Y \sm S_q})$. 
\end{thm}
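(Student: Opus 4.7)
The plan is to establish Part (2) first under the simple normal crossing hypothesis, and then to deduce Part (1) by a Hironaka modification of the degenerate fibers. Both parts rest on the classical extension principle for plurisubharmonic functions: if $A \subset M$ is a closed analytic subset of codimension $\geq 1$ and $\varphi$ is a plurisubharmonic function on $M \setminus A$ that is locally bounded above near $A$, then the upper semi-continuous regularization extends $\varphi$ to a plurisubharmonic function on $M$. In terms of singular Hermitian metrics on a line bundle, this principle extends a semi-positively curved metric across $A$, provided its local weights are locally bounded above.

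\textbf{Part (2).}
Fix $y_0 \in \Delta \setminus Z$ and a small polydisk $W \subset Y$ around $y_0$; by the SNC hypothesis, $f^{-1}(\Delta) \cap f^{-1}(W)$ is a divisor with simple normal crossing support. For any local holomorphic section $\sigma$ of $\F$ near $y_0$, the weight of $g_{\mathcal{O}(1)}^\circ$ in the local frame of $\mathcal{O}(1)$ corresponding to $\sigma$ equals $-\log \|\sigma\|_g^2$ up to a smooth term. I would represent $\sigma$ by a relatively harmonic $E$-valued $(n,q)$-form on the smooth fibers via Takegoshi's representation theorem, then invoke Takegoshi--Koll\'ar type $L^2$-estimates in the explicit SNC local model to show that $\|\sigma\|_g^2$ remains locally bounded from below as one approaches $\Delta$. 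The extension principle above then produces a singular metric $g_{\mathcal{O}(1)}$ on $\mathcal{O}(1)|_{\pi^{-1}(Y \setminus S_q)}$ with semi-positive curvature, extending $g_{\mathcal{O}(1)}^\circ$; uniqueness of the local extensions together with the codimension $\geq 2$ of $S_q$ ensure gluing to a well-defined global object.

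\textbf{Part (1).}
Fix $Y_0 \Subset Y$. Apply Hironaka's theorem to obtain a projective bimeromorphic modification $\mu \colon \widetilde{X} \to X$ that is an isomorphism over $X \setminus f^{-1}(\Delta)$, such that $\widetilde{f} := f \circ \mu$ satisfies the SNC hypothesis of Part (2) outside some closed analytic $\widetilde{Z} \subset \Delta$ of codimension $\geq 2$ in $Y$. Choose a relative K\"ahler form for $\widetilde{f}$; since Nakano semi-positivity is preserved under holomorphic pullback, $(\mu^* E, \mu^* h)$ is Nakano semi-positive. By Grauert--Riemenschneider vanishing applied to $\mu$ together with the projection formula, $\mu_*(K_{\widetilde{X}/X}) = \mathcal{O}_X$ and $R^i \mu_*(K_{\widetilde{X}/X}) = 0$ for $i \geq 1$, so the Leray spectral sequence yields a canonical isomorphism $R^q \widetilde{f}_*(K_{\widetilde{X}/Y} \otimes \mu^* E) \cong \F$. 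Part (2) applied to $\widetilde{f}$ then directly produces the required singular metric on $\mathcal{O}(1)|_{\pi^{-1}(Y_0 \setminus S_q)}$, semi-positively curved and smooth over $Y_0 \setminus \Delta$.

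\textbf{Main obstacle.}
The technical heart is the local upper bound on the weight of $g_{\mathcal{O}(1)}^\circ$ near an SNC degeneration in Part (2)---equivalently, the statement that the Hodge metric $g$ does not blow up too fast as one approaches $\Delta$. Proving this demands careful $L^2$-analysis on the smooth fibers in a neighbourhood of a singular one, using explicit local holomorphic models of SNC degenerations; the Nakano semi-positivity of $(E,h)$ enters precisely to secure the correct signs in the Bochner--Kodaira--Nakano identities underlying Takegoshi's theorem. By contrast, the reduction step in Part (1) is conceptually more routine once the canonical identification of direct images under $\mu$ is in hand.
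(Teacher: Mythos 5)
Your overall architecture agrees with the paper's: localize on the base, prove that the relevant Hodge norms stay bounded below near $\Dl$, extend the weight by the plurisubharmonic extension theorem, cross the codimension-two sets $Z$ and $S_q$ by Hartogs, and reduce (1) to the SNC situation by a modification $\mu$ (the paper's Lemma \ref{decomp} and Proposition \ref{MT'}; note the paper's reason for restricting to $Y_0 \Subset Y$ is that $f\circ\mu$ is only \emph{locally} K\"ahler, so a relative K\"ahler form is only available over relatively compact subsets). The gap is in the step you label the ``technical heart.'' You propose to obtain the lower bound on $\|\sigma\|_g^2$ by $L^2$-estimates ``in the explicit SNC local model,'' but the SNC hypothesis does not make $f^*\Dl$ reduced: in the local model $t_m=z_{n+m}^{b_{n+m}}\prod z_j^{b_j}$ the multiplicities $b_j$ can exceed $1$, and the Fujita-type estimate (integrating the harmonic representative over a small polydisc transverse to a component of the central fiber where it does not vanish) breaks down precisely there, because no component of the fiber is a reduced branch along which $f$ is locally a coordinate projection. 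This is the ``major difficulty'' the paper isolates, and it is resolved not in the SNC model itself but by a weakly semi-stable reduction: a ramified base change $\tau:Y'\to Y$, normalization $X'$, and resolution $X''$, after which the non-exceptional part of ${f''}^*\Dl'$ is reduced (Lemma \ref{KM7.23}) and the Fujita argument applies upstairs (Lemmas \ref{Ft11}--\ref{Ft12}).

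That reduction forces three further pieces of work that your proposal does not account for and that are not routine: (i) the pullback ${\tau''}^*\w$ degenerates on $X''$, so Takegoshi's harmonic theory must be redeveloped for the degenerate K\"ahler form $\w''$ via an approximating sequence of genuine K\"ahler forms (Proposition \ref{Takegoshi}); (ii) one must know that a section not vanishing at $y'$ has harmonic representative not identically zero along some \emph{non-exceptional} component of the fiber, which the paper proves using the Cohen--Macaulay property of $\Om_{X'}^p$ on the toric singularities of the normalization (Lemmas \ref{CM} and \ref{non-v}); and (iii) the estimate obtained upstairs in the frame of $\FF$ must be transported back to the original Hodge metric through the comparison map $\vph:F'\to\tau^*F$, whose inverse blows up along $\Dl'$, and it must be made uniform both over the unit sphere of sections and over the moving rank-one quotient $L_a$ (the eigenvalue and projection estimates of \S\ref{5}). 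Your formula identifying the weight of $\go$ with $-\log\|\sigma\|_g^2$ for a single holomorphic section also elides this last uniformity: the weight at a point of $\BP(\F)$ is $-\log g(P_a(e_r),P_a(e_r))$ for the $g$-orthogonal projection onto the complement of a varying kernel, so a bound for one fixed section is not enough. As written, the proposal assumes the conclusion of the hardest lemma rather than proving it.
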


If in particular in Theorem \ref{MT}, 
$\Rq$ is locally free and $Y$ is a smooth
projective variety, then the vector bundle $\Rq$ is pseudo-effective
in the sense of \cite[\S 6]{DPS}.
%
The above curvature property of $\CO(1)$ leads to the following
algebraic positivity of $\Rq$.

\begin{thm} \label{algebraic}
Let $f : X \lra Y$ be a surjective morphism with connected fibers
between smooth projective varieties,
and let $(E, h)$ be a Nakano semi-positive holomorphic vector bundle 
on $X$.
Then the torsion free sheaf $\Rq$ is weakly positive over $Y \sm \Dl$
(the smooth locus of $f$), in the sense of Viehweg \cite[2.13]{Vi2}.
\end{thm}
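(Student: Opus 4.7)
By Viehweg's criterion \cite[2.13]{Vi2}, weak positivity of $\F = \Rq$ over $Y \sm \Dl$ amounts to: for every ample line bundle $H$ on $Y$, every $\alpha \ge 1$, and every $y_0 \in Y \sm \Dl$, there exists $\beta \ge 1$ such that $\what S^{\alpha\beta}(\F) \ot H^{\ot \beta}$ is globally generated at $y_0$. Using $\codim_Y S_q \ge 2$, Hartogs identifies $H^0(Y, \what S^k(\F) \ot H^{\ot \beta})$ with $H^0(P, \CO_P(k) \ot \pi^* H^\beta)$, where $P := \BP(\F|_{Y \sm S_q})$, and the stalk $\what S^{\alpha\beta}(\F)_{y_0}$ for $y_0 \in Y \sm \Dl$ equals $S^{\alpha\beta}(\F_{y_0}) = H^0(\pi^{-1}(y_0), \CO(\alpha\beta))$. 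The task therefore reduces to lifting every section of $\CO(\alpha\beta)$ over the smooth fiber $F := \pi^{-1}(y_0)$ to a global section of $\CO_P(\alpha\beta) \ot \pi^* H^\beta$ on $P$.

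The plan is to combine the semi-positive metric supplied by Theorem \ref{MT}(1) with an Ohsawa--Takegoshi type extension on a compact model of $P$. Applying Theorem \ref{MT}(1) with $Y_0 = Y$ (valid as $Y$ is projective) yields a singular Hermitian metric $g_{\CO(1)}$ on $\CO_P(1)$ of semi-positive curvature, smooth on $\pi^{-1}(Y \sm \Dl)$. Take a resolution $\sg : \wtil P \lra \ol P := \mathrm{Proj}_Y\bigl(\bigoplus_{k\ge 0} S^k\F\bigr)$ of the natural projective compactification $\ol P$, chosen so that $\sg$ is an isomorphism over $P$; since the exceptional set of $\sg$ lies above $S_q$ (an analytic subset of $Y$ of codimension $\ge 2$), the plurisubharmonic weight of $g_{\CO(1)}$ extends across it by the classical extension theorem for plurisubharmonic functions, producing a singular metric of semi-positive curvature on $\wtil L := \sg^* \CO_{\ol P}(1)$, still smooth over $(\pi\sg)^{-1}(Y \sm \Dl)$.

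For $\beta$ large relative to $\alpha$ and $H$, the line bundle $\wtil L^{\alpha\beta} \ot (\pi\sg)^* H^\beta$ carries a singular metric whose curvature current dominates $(\pi\sg)^* c_1(H^\beta)$ and whose multiplier ideal is trivial in a neighbourhood of the smooth fiber $F$. An Ohsawa--Takegoshi extension (or Nadel vanishing for the ideal sheaf of $F$) on $\wtil P$ then lifts every section of $\CO_F(\alpha\beta)$ to a global section of $\wtil L^{\alpha\beta} \ot (\pi\sg)^* H^\beta$; its pushforward through $\pi\sg$, extended across $S_q$ by Hartogs, supplies the generating sections of $\what S^{\alpha\beta}(\F) \ot H^{\ot \beta}$ at $y_0$. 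The main technical obstacle is the control of the singular metric near the exceptional locus of $\sg$: one must verify that its plurisubharmonic weight remains locally bounded above there (so that the classical extension applies) and that the multiplier ideal along $F$ remains trivial after extension; both rely on the codimension $\ge 2$ hypothesis for $S_q$ together with the smoothness of $g_{\CO(1)}$ over $\pi^{-1}(Y \sm \Dl)$ guaranteed by Theorem \ref{MT}(1).
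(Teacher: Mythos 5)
Your overall strategy (reduce weak positivity via Viehweg's criterion to lifting sections of $\CO(\alpha\beta)\ot\pi^*H^{\beta}$ from a fiber $\BP(F_y)$, $y\in Y\sm\Dl$, using the singular semi-positive metric from Theorem \ref{MT}(1) together with an $L^2$ extension/vanishing argument) is the same as the paper's. But there is a genuine gap at the step where you compactify. You pass to a resolution $\wtil P$ of $\mathrm{Proj}_Y(\bigoplus S^k\F)$ and claim the plurisubharmonic weight of $g_{\CO(1)}$ extends across the locus lying over $S_q$ ``by the classical extension theorem.'' That theorem is automatic only when the exceptional set has codimension $\ge 2$; here the set $(\pi\sg)^{-1}(S_q)\cup\Exc\sg$ is in general a \emph{divisor} in $\wtil P$ (the fibers of $\mathrm{Proj}$ jump in dimension where $\F$ fails to be locally free, and the resolution itself introduces exceptional divisors), so you need the weight to be locally bounded above there, i.e.\ a positive lower bound for the metric near $S_q$. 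Nothing in Theorem \ref{MT}(1) — which only produces a metric on $\CO(1)|_{\pi^{-1}(Y_0\sm S_q)}$ — gives such a bound; establishing lower bounds of this kind is precisely the hard analytic content of the whole paper, and it is done only near $\Dl\sm S_q$, not over $S_q$. You acknowledge this as ``the main technical obstacle'' but the codimension-$\ge 2$ hypothesis you invoke does not resolve it, because codimension $\ge 2$ in $Y$ does not imply codimension $\ge 2$ in $\wtil P$.

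The paper avoids this issue entirely by \emph{not} compactifying: it works on the noncompact manifold $Z=\BP(F)\sm\pi^{-1}(S_q)$, observes that $Z$ is Zariski open in a smooth projective variety and hence carries a complete K\"ahler metric (Demailly \cite[0.2]{D82}), and applies Demailly's $L^2$ existence theorem for $\rdb$ on complete K\"ahler manifolds \cite[5.1]{D82}. The codimension condition on $S_q$ is then used only through reflexivity of $\what S^{ab}(\F)\ot A^{\ot b}$ to identify sections over $Y$ with sections over $Y\sm S_q$ — never to extend the metric. Two further points you would need to supply even after fixing the main gap: (i) the adjunction $\CO(ab)\ot\pi^*A^{\ot b}=K_Z\ot L$ forces $L=\CO(ab+r)\ot\pi^*(A^{\ot b}\ot K_Y^{-1}\ot(\what\det\F)^{-1})$, so the ample $A$ must be chosen with $A\ot K_Y^{-1}\ot(\what\det\F)^{-1}$ ample (on $\wtil P$ you would additionally have to absorb the discrepancy divisors of $\sg$ into this bookkeeping); and (ii) the vanishing of the correction term on the fiber is arranged by the explicit weight $\phi=\log(\sum h_A(s_i,s_i))^m$ built from sections of $A$ cutting out $y$, which is what makes the multiplier ideal equal to $\CI_{\BP(F_y)}$ near the fiber — this needs to be constructed, not just asserted.
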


Here is a brief history of the semi-positivity of direct image
sheaves, especially in case the map $f : X \lra Y$ is not smooth.
The origin is due to Fujita \cite{Ft} for $f_*K_{X/Y}$ over a curve,
in which he analyzed the singularities of the Hodge metric.
After \cite{Ft}, there are a lot of works mostly in algebraic geometry
to try to generalize \cite{Ft}, for example
by Kawamata \cite{Ka1}\,\cite{Ka2}\,\cite{Ka3}, Viehweg \cite{Vi1},
Zucker \cite{Z}, Nakayama \cite{N1}, Moriwaki \cite{Mw}, 
Fujino \cite{Fn}, Campana \cite{C}.
Their methods heavily depend on the theory of a variation 
of Hodge structures.
While Koll\'ar \cite{Ko1} and Ohsawa \cite[\S 3]{Oh} reduce 
the semi-positivity to their vanishing theorems.
We refer to \cite{EV}\,\cite[V.\S 3]{N2}\,\cite{Vi2} for further
related works.
There are more recent related works from the Bergman kernel point of view,
by Berndtsson-P\u{a}un \cite{BP1}\,\cite{BP2} and Tsuji \cite{Ts}.
Their interests are the positivity of a relative canonical bundle
twisted with a line bundle with a singular Hermitian metric of
semi-positive curvature, or its zero-th direct image,
which are slightly different from ours in this paper.

The position of this paper is rather close to 
the original work of Fujita.
We work in the category of K\"ahlerian geometry.
We will prove that a Hodge metric defined over $Y \sm \Dl$
can be extended across the discriminant locus $\Dl$, 
which is a local question on the base.
Because of the twist with a Nakano semi-positive vector bundle $E$
which may not be semi-ample, 
one can not take (nor reduce a study to) 
the variation of Hodge structures approach.
The algebraic approach quoted above only concludes that
the direct image sheaves have algebraic semi-positivities, such as
nefness, or weak positivity.
It is like semi-positivity of integration of the curvature along
subvarieties.
These algebraic semi-positivities already requires a global property 
on the base, for example (quasi-)projectivity.
In the algebraic approach, to obtain a stronger result,
they sometimes pose a normal crossing condition of the discriminant locus
$\Dl \subset Y$ of the map, and/or a unipotency of local monodromies.
We are free from these conditions, but we must admit that our method 
does not tell local freeness nor nefness of direct images sheaves.
We really deal with Hodge metrics, and 
we do not use the theory of a variation of Hodge structures, 
nor global geometry on the base, in contrast to the algebraic approach.

In connection with a moduli or a deformation theory,
a direct image sheaf on a parameter space defines a canonically
attached sheaf quite often, and then the curvature of the Hodge metric
describes the geometry of the parameter space.
Then, especially as a consequence of our previous paper \cite{MT2},
the Nakano semi-positivity of the curvature on which the family is smooth,
is quite useful in practice.
If there exists a reasonable compactification of the parameter space,
our results in this paper can be applied to obtain boundary properties.
There might be further applications in this direction, we hope.
While the algebraic semi-positivity is more or less Griffiths 
semi-positivity, which has nice functorial properties
but is not strong enough especially in geometry.

Our method of proof is to try to generalize the one in \cite{Ft}.
The main issue is to obtain a positive lower bound of
the singularities of a Hodge metric $g$.
It is like a uniform upper estimate for a family of plurisubharmonic
functions $-\log g(u,u)$ around $\Dl \subset Y$,
where $u$ is any nowhere vanishing local section of $\Rq$.
In case $\dim Y = 1$ and arbitrary $q \ge 0$, 
we can obtain rather easily the results we have stated, 
by combining \cite{Ft} and our previous work \cite{MT2}.
In case $\dim Y \ge 1$, a major difficulty arises.
If the fibers of $f$ are reduced, it is not difficult to apply again
the method we took in case $\dim Y = 1$.
However in general, a singular fiber is not a divisor anymore, 
and in addition it can be non-reduced.
To avoid such an analytically uncomfortable situation,
we employ a standard technique in algebraic geometry;
a semi-stable reduction and an analysis of singularities which
naturally appear in the semi-stable reduction process (\S \ref{3}).
A Hodge metric after a semi-stable reduction would be better
and would be handled by known techniques, because fibers become reduced.
Then the crucial point in the metric analysis is 
a comparison of the original Hodge metric
and a Hodge metric after a semi-stable reduction.
As a result of taking a ramified cover and a resolution of singularities
in a semi-stable reduction,
we naturally need to deal with a degenerate K\"ahler form,
and then we are forced to develop a theory of relative harmonic forms
(as in \cite{Tk}) with respect to the degenerate K\"ahler form (\S \ref{4}).
After a series of these observations, we bound singularities of 
the Hodge metric, and obtain a uniform estimate to extend the Hodge metric
(\S \ref{5}).
The proof is not so simple to mention more details here,
because we need to consider a uniform estimate, when
a rank one quotient $\pi_L : \Rq \lra L$ moves and a section of
the kernel of $\pi_L$ moves.
There is a technical introduction \cite{MT3}, where we explain
the case $\dim Y = 1$, or the case where the map $f$ has reduced fibers.

{\it Acknowledgement}.
The second named author would like to express his thanks
for Professor Masanori Ishida for answering questions on
toric geometry.


\section{Hodge Metric} \label{2}

\subsection{Definition of Hodge metric}

Let us start by recalling basic definitions and facts.
Let $f : X \lra Y$ be a K\"ahler fiber space as in Set up \ref{basic}.
For a point $y \in Y \sm \Dl$, we denote by $X_y = f^{-1}(y),
\w_y = \w|_{X_y}, E_y = E|_{X_y}, h_y = h|_{X_y}$,
and for an open subset $W \subset Y$, we denote by $X_W = f^{-1}(W)$.
We set $\Omega_{X/Y}^p = \bigwedge^p (\Om_X^1/(\Ima f^*\Om_Y^1))$
rather formally for the natural map $f^*\Om_Y^1 \lra \Om_X^1$, 
because we will only deal with
$\Omega_{X/Y}^p$ where $f$ is smooth.
For an open subset $U \subset X$ where $f$ is smooth,
and for a differentiable form $\sg \in A^{p,0}(U, E)$, 
we say $\sg$ is {\it relatively holomorphic}
and write $[\sg] \in H^0(U, \Omega_{X/Y}^p \ot E)$,
if for every $x \in U$, 
there exists an coordinate neighbourhood $W$ of $f(x) \in Y$
with a nowhere vanishing $\theta \in H^0(W, K_Y)$ such that 
$\sg \wed f^*\theta \in H^0(U \cap X_W, \Omega_X^{p+m} \ot E)$
(\cite[\S 3.1]{MT2}).

We remind the readers of the following basic facts,
which we will use repeatedly.
See \cite[6.9]{Tk} for more general case when $Y$ may be singular, 
\cite[4.9]{MT2} for (3), and also \cite{Ko1}.

\begin{lem} \label{torsion free}
Let $f : X \lra Y$ and $(E, h)$ be as in Set up \ref{basic}.
Let $q$ be a non-negative integer.
Then 
(1) $\Rq$ is torsion free,  
(2) Grauert-Riemenschneider vanishing:\ 
$\Rq = 0$ for $q > n$, and
(3) $\Rq$ is locally free on $Y \sm \Dl$.
\end{lem}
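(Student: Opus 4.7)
The plan is to deduce the three assertions from Takegoshi's relative $L^2$-harmonic theory \cite{Tk} for proper K\"ahler morphisms with Nakano semi-positive coefficient bundles, combined with classical base-change on the smooth locus. I would establish (3) first, then (1), and finally derive (2) for free.

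For (3), over $Y \sm \Dl$ the map $f$ restricts to a proper holomorphic submersion and $(E,h)$ restricts to a Nakano semi-positive bundle on each compact K\"ahler fiber $(X_y,\w_y)$. The fiberwise $\rdb$-Laplacian $\Box_y$ acting on $A^{n,q}(X_y, E_y)$ varies smoothly with $y$, so $\dim \Ker \Box_y$ is upper semicontinuous; a Hodge-type identification of $H^q(X_y, K_{X_y} \ot E_y)$ with this harmonic space gives the complementary semi-continuity, hence local constancy of the fiber cohomology dimension. Grauert base-change then produces local freeness of $\F$ on $Y \sm \Dl$ with fibers these harmonic spaces; this is carried out in detail in \cite[4.9]{MT2}.

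For (1), over any Stein open $W \subset Y$ every section of $\F|_W$ admits a canonical $L^2$-harmonic representative as an $E$-valued form on $X_W$ with respect to a complete K\"ahler metric on $X_W$, by Takegoshi \cite[\S 6]{Tk}. The harmonic projector commutes with multiplication by $f^*\vph$ for $\vph \in \CO_Y(W)$, so a local section $s$ killed by a non-zero $\vph$ produces a harmonic form vanishing on the non-empty open set $f^{-1}(\{\vph \ne 0\})$; unique continuation for the elliptic operator $\Box$ then forces $s = 0$. Koll\'ar \cite{Ko1} gives a parallel argument in the algebraic case, based on his relative vanishing theorem.

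Finally (2) is immediate from (1) and (3): the fibers on $Y \sm \Dl$ have dimension $n$, so $H^q(X_y, K_{X_y} \ot E_y) = 0$ for $q > n$ on dimensional grounds, hence by (3) the sheaf $\F$ vanishes on $Y \sm \Dl$; by (1) torsion freeness propagates this vanishing to all of $Y$. The one substantive technical ingredient is the invocation of Takegoshi's machinery for (1), where the K\"ahler hypothesis on $f$ is genuinely used; the rest is formal.
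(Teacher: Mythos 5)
Your proposal is correct and follows essentially the same route as the paper, which for this lemma offers no argument of its own but simply refers to Takegoshi \cite[6.9]{Tk}, to \cite[4.9]{MT2} for (3), and to Koll\'ar \cite{Ko1} --- exactly the sources whose content you unpack. The only (harmless) deviations are that you derive (2) formally from (1) and (3) via the generic fibre dimension and torsion-freeness rather than quoting Takegoshi's vanishing directly, and that your phrase ``the harmonic projector commutes with multiplication by $f^*\vph$'' should be understood as the statement that Takegoshi's harmonic space represents the cohomology as an $H^0(Y,\CO_Y)$-module (\cite[5.2.i]{Tk}) --- a genuine theorem resting on the Nakano semi-positivity of $(E,h)$, not a formal property of harmonic projection.
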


Using Grauert-Riemenschneider vanishing, a Leray spectral sequence
argument shows that $\Rq$ does not depend on 
smooth bimeromorphic models of $X$.
Choices of a smooth bimeromorphic model of $X$ and of 
a relative K\"ahler form for the new model give rise to
a Hermitian metric on the vector bundle $\Rq|_{Y \sm \Dl}$ 
as follows.

\begin{dfn}  \label{hodge}
(Hodge metric \cite[\S 5.1]{MT2}.) \
In Set up \ref{basic}, assume that $f$ is smooth, 
$Y$ is Stein with $K_Y \cong \CO_Y$  (with a nowhere vanishing 
$\theta_Y \in H^0(Y, K_Y)$), and $X$ is K\"ahler.
A choice of a K\"ahler form $\w$ on $X$ gives an injection
$S_\w := S_f^q : \Rq \lra f_*(\Om_{X/Y}^{n-q} \ot E)$.
Then for every pair of vectors $u_y, v_y \in \Rq_y$, we define
$$
	g(u_y, v_y) 
	= \int_{X_y} (c_{n-q} /q!) \
		\w_y^q \wed S_\w(u_y) \wed h_y\ol{S_\w(v_y)}.
$$
Here $c_p = \ai^{p^2}$ for every integer $p \ge 0$.
Since $f$ is smooth, these pointwise inner products define 
a smooth Hermitian metric $g$ on $\Rq$, 
which we call the {\it Hodge metric} with respect to $\w$ and $h$. 
\qed
\end{dfn}

Details for the construction of the map $S_\w$ will be provided
in Step 2 in the proof of Proposition \ref{Takegoshi}.
In Definition \ref{hodge}, another choice of a K\"ahler form 
$\w'$ on $X$ gives another metric $g'$ on $\Rq$. 
However in case $\w$ and $\w'$ relate with $\w|_{X_y} = \w'|_{X_y}$ 
for any $y \in Y$, these metrics coincide $g = g'$ (\cite[5.2]{MT2}).
Thus a Hodge metric is defined for a polarized smooth K\"ahler 
fiber space in Set up \ref{basic}.
In case when $q = 0$, the Hodge metric does not depend on 
a relative K\"ahler form.
In fact, it is given by
$$
 g(u_y, v_y) 
 = \int_{X_y} c_n u_y \wed h_y\ol{v_y}
$$
for $u_y, v_y \in H^0(X_y, K_{X_y} \ot E_y)$.


\subsection{Localization}

We consider the following local setting, around a codimension 1
general point of $\Dl \subset Y$ 
(possibly after a modification of $X$).

\begin{setup} \label{local}
(Generic local, relative normal crossing setting.) \
Let $f : X \lra Y$, $(E, h)$ and $0 \le q \le n$ be as in Set up \ref{basic}.
Let us assume further the following:\

(1)
The base $Y$ is (biholomorphic to) 
a unit polydisc in $\BC^m$ with coordinates $t = (t_1, \ldots, t_m)$.
Let $K_Y \cong \CO_Y$ be a trivialization by a nowhere vanishing
section $dt = dt_1 \wed \ldots \wed dt_m \in H^0(Y, K_Y)$.

(1.i)
$f$ is flat, and the discriminant locus $\Dl \subset Y$ is 
$\Dl = \{t_m = 0\}$ (or $\Dl = \emptyset$),

(1.ii)
the effective divisor $f^*\Dl$ has a simple normal crossing support,
and

(1.iii)
the morphism $\Supp f^*\Dl \lra \Dl$ is relative
normal crossing (see below).

(2)
$\Rq \cong \CO_Y^{\oplus r}$, i.e.,
globally free and trivialized of rank $r$.

(3)
$X$ admits a K\"ahler form $\w$.
Let $g$ be the Hodge metric on $\Rq|_{Y \sm \Dl}$ with respect to 
$\w$ and $h$.

We may replace $Y$ by slightly smaller polydiscs, or 
may assume everything is defined over a slightly larger polydisc. 
\qed
\end{setup}

In the above, $\Supp f^*\Dl \lra \Dl$ is relative
normal crossing means that, around every $x \in X$, 
there exists a local coordinate $(U; z = (z_1, \ldots, z_{n+m}))$ 
such that $f|_U$ is given by $t_1 = z_{n+1}, \ldots, t_{m-1} = z_{n+m-1}, 
t_m = z_{n+m}^{b_{n+m}} \prod_{j=1}^{n} z_j^{b_j}$ with 
non-negative integers $b_j$ and $b_{n+m}$.

Then the following version of Theorem \ref{MT}\,(2)
is our main technical statement.

\begin{thm} \label{bdd}
Let $f : (X, \w) \lra Y \subset \BC^m$, $(E, h)$ and $0 \le q \le n$ be as in 
Set up \ref{local}.
The pull-back metric $\pi^*g$ of the Hodge metric $g$ on 
$\Rq|_{Y \sm \Dl}$ with respect to $\w$ and $h$ gives 
the quotient metric $\go$ on $\CO(1)|_{\pi^{-1}(Y \sm \Dl)}$.
The smooth Hermitian metric $\go$ 
extends as a singular Hermitian metric $g_{\CO(1)}$ on $\CO(1)$
with semi-positive curvature.
\end{thm}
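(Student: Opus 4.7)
The assertion is equivalent, in each local chart of $\BP(\F)$ equipped with a local frame $e$ of $\CO(1)$, to the psh potential $-\log\go(e,e)$ being locally bounded from above near $\pi^{-1}(\Delta)$: this is the classical criterion for a psh function to extend across an analytic subset, and semi-positivity of the extended curvature is then automatic from its positivity on the complement. The global trivialization $\F\cong\CO_Y^{\oplus r}$ of Set up \ref{local}(2) identifies $\BP(\F)$ with $\BP^{r-1}\times Y$. Writing $G(y) = (g(s_i,s_j)(y))$ for the Hermitian matrix of the Hodge metric in the trivializing frame $(s_1,\ldots,s_r)$, a direct computation of the quotient metric shows that on the affine chart with $a_1 = 1$, the local frame $e = [\pi^*s_1]$ of $\CO(1)$ satisfies
$$
  -\log\go(e,e)([a],y) \;=\; \log\bigl(a^* G(y)^{-1} a\bigr).
$$
Bounding the right side above uniformly in $a$ on compact subsets of the affine chart as $y\to\Delta$ is equivalent to a uniform positive lower bound on the smallest eigenvalue of $G(y)$---equivalently, to $g(v,v)(y) \ge c\,|v|^2$ for every $v\in\F_y$ with $c>0$ independent of $y$ near $\Delta$.

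I plan to obtain this uniform estimate through a semi-stable reduction. Fix a finite cover $\tau : Y'\to Y$ totally ramified of large order $N$ along $\Delta$; normalize the fibered product $X\times_Y Y'$ and resolve its singularities to obtain a new K\"ahler fiber space $f'':X''\to Y'$ whose preimage of $\Delta' = \tau^{-1}(\Delta)$ is a reduced divisor with simple normal crossings. Pull back $(E,h)$ to a Nakano semi-positive bundle $(E'',h'')$ on $X''$. Grauert--Riemenschneider vanishing (Lemma \ref{torsion free}) together with the birationality away from $\Delta'$ identifies $\FF$ with $\tau^*\F$ on $Y'\setminus\Delta'$. The pull-back of the relative K\"ahler form to $X''$ becomes degenerate along the exceptional locus of the resolution, and Takegoshi's relative harmonic theory \cite{Tk}, on which the Hodge metric construction rests, must therefore be extended to this degenerate setting: this is the role of Section \ref{4}. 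With such a theory in hand, harmonic $L^2$-representatives express the new Hodge metric $g''$, and a quantitative comparison with $\tau^*g$ is set up through the birational identification, the ramification index $N$, and the multiplicities of the exceptional divisors.

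On the semi-stable model, the reduced SNC central fibre is amenable to a Fujita-type analysis: harmonic representatives extend holomorphically across the central fibre and their $L^2$-norms acquire at worst logarithmic corrections, yielding an estimate of the form $-\log g''(u'',u'') \le C\log(1/|t'_m|)+C$ on punctured polydiscs of $Y'$. Pushing this inequality back through $\tau$ and the metric comparison gives the sought upper bound on the potential of $g$ over $Y$. Uniformity as the point $([\pi_L], y)$ varies on compact subsets of $\BP(\F)$ follows from the global trivialization of $\F$ together with the continuous dependence of harmonic representatives on parameters. The technical heart of the argument---and, I expect, the principal obstacle---is the metric comparison through the resolution of singularities: it demands both the degenerate harmonic theory of Section \ref{4} and a careful bookkeeping of the vanishing orders of harmonic representatives along the exceptional divisors, where the original non-reduced fibre structure ultimately resurfaces.
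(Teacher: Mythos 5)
Your overall architecture --- reduce to a lower bound for the Hodge metric, pass to a semi-stable reduction, extend Takegoshi's harmonic theory to the degenerate pulled-back K\"ahler form, and compare the two Hodge metrics through the inclusion $\FF \hookrightarrow \tau^*\F$ --- is exactly the paper's (Sections \ref{3}--\ref{5}), and your reduction of the statement to a uniform positive lower bound on the smallest eigenvalue of the Gram matrix $G(y)$ is sound. Indeed, combined with the isometry of Lemma \ref{isometry} and the boundedness of the matrix $\Phi$ representing $\vph$, a uniform bound $g'(u_s,u_s)\ge N|s|^2$ upstairs would give such an eigenvalue bound downstairs directly, which would even streamline the bookkeeping of the paper's Lemma \ref{bdd4}.

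The genuine gap is in the key estimate. You propose to prove $-\log g''(u'',u'')\le C\log(1/|t'_m|)+C$, i.e.\ a lower bound for the Hodge metric that may degenerate like $|t'_m|^{C}$. That is not enough: a psh function on $(Y\sm\Dl)\times D_\ep$ bounded above by $C\log(1/|t_m|)$ with a fixed $C>0$ need not extend as a psh function across $\pi^{-1}(\Dl)$ (it only yields that the curvature current is bounded below by $-C$ times the current of integration on $\pi^{-1}(\Dl)$), so the logarithmic loss would destroy exactly the semi-positivity you need. What the paper proves (Lemmas \ref{Ft11}--\ref{Ft12}, following Fujita) is a genuinely uniform bound $g'(u_s,u_s)(t')\ge N>0$ for all unit vectors $s$ and all $t'$ near $y'$, with no logarithmic correction. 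The mechanism for this is the one ingredient missing from your outline: the non-vanishing Lemma \ref{non-v}. Holomorphic extension of the harmonic representative $*\CH(u)$ across the central fibre is not sufficient, since a priori it could vanish identically along ${f''}^{-1}(y')$; one must show it is nonzero along some \emph{non-$\mu$-exceptional} component $B''_j$ of ${f''}^*\Dl'$ --- these are the components made reduced by the base change of order $\ell=\mathrm{lcm}\,(b_j)$ (not merely ``large $N$''; and note the exceptional part $B''_{exc}$ need not be reduced). Near a general point of such a $B''_j$ the map $f''$ is given in coordinates by a projection, and the fibre integral over a fixed coordinate polydisc is then bounded below by a constant independent of $t'$. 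Proving this non-vanishing is where the analysis of the singularities of the normalization $X'$ enters (toric singularities, hence $\mu_*\Om^p_{X''}=\Om^p_{X'}$ with $\Om^p_{X'}$ Cohen--Macaulay, giving the regular-sequence argument of Corollary \ref{CMc} and Lemma \ref{non-v}); without it the uniform estimate, and hence the extension, does not follow.
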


We shall see our main result:\ Theorem \ref{MT} by 
taking Theorem \ref{bdd} for granted, in the rest of this section.
For a general K\"ahler fiber space $f : X \lra Y$,
we can reduce the study of a Hodge metric to 
the study which is local on $Y$ as in Set up \ref{local},
possibly after taking blowing-ups of $X$.

\begin{lem} \label{decomp}
Let $f : X \lra Y$, $(E, h)$ and $0 \le q \le n$ be as in 
Set up \ref{basic}.
Let $Y_0 \subset Y$ be a relatively compact open subset. 
Let $Z_0 \subset \Dl$ be a closed analytic subset of 
$\codim_Y Z_0 \ge 2$ such that $\Dl \sm Z_0$ is a smooth divisor
(or empty). 
Possibly after restricting everything on a relatively compact 
open neighbourhood over $Y_0$, let $\mu : X' \lra X$ be a birational map
from a complex manifold $X'$, which is obtained by a finite number 
of blowing-ups along non-singular centers, and which is biholomorphic 
over $X \sm f^{-1}(\Dl)$, such that
$f^*(\Dl \sm Z_0)$ is a divisor with simple normal crossing support
on $X \sm f^{-1}(Z_0)$.
Let $\w_{f'}$ be a relative K\"ahler form 
for $f' := f \circ \mu$ over $Y_0$.
Then 

(1)
there exist 
(i) 
a closed analytic subset $Z \subset \Dl$ of $\codim_Y Z \ge 2$,
(ii) 
an open covering $\{W_i\}_i$ of $Y_0 \sm Z$, and
(iii)
a K\"ahler form $\w_i$ on $X'_{W_i} = {f'}^{-1}(W_i)$ for every $i$,
such that 
(a) 
for every $i$, $W_i$ is biholomorphic to the unit polydisc, and 
the induced $f'_i : (X'_{W_i}, \w_i) \lra W_i \subset \BC^m$, 
$(\mu^*E, \mu^*h)|_{X'_{W_i}}$ and $0 \le q \le n$ satisfy 
all the conditions in Set up \ref{local}, and that
(b)
$\w_i|_{X'_y} = \w_{f'}|_{X'_y} $ for every $i$ and $y \in W_i$.
Moreover one can take $\{W_i\}_i$ so that, the same is true, 
even if one replaces all $W_i$ by slightly smaller concentric polydiscs.

(2)
Via the isomorphism $\Rq \cong R^qf'_*(K_{X'/Y} \ot \mu^*E)$,
the Hodge metric on $R^qf'_*(K_{X'/Y} \ot \mu^*E)|_{Y_0 \sm \Dl}$
with respect to $\w_{f'}$ and $\mu^*h$ induces a smooth Hermitian metric 
$g$ with Nakano semi-positive curvature on $\Rq|_{Y_0 \sm \Dl}$. 
\end{lem}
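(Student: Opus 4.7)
The plan for (1) is to enlarge $Z_0$ to a suitable $Z$, then cover $Y_0 \sm Z$ by polydiscs $W_i$ with local data satisfying Set up \ref{local}. Flatness of $f'$ over $Y \sm Z_0$ is automatic, since the SNC hypothesis on ${f'}^*(\Dl \sm Z_0)$ forces every fiber of $f'$ over $Y \sm Z_0$ to be purely $n$-dimensional, and miracle flatness between the smooth spaces $X'$ and $Y$ then applies. Hence I would take $Z \subset \Dl$ to be the union of $Z_0$ with two further codim-$\ge 2$ closed subsets of $Y$: the locus over which $R^qf'_*(K_{X'/Y} \ot \mu^*E)$ fails to be locally free, which is of $\codim_Y \ge 2$ by Lemma \ref{torsion free}, and the image under $f'$ of the locus in $X'$ where the relative SNC form of Set up \ref{local}(1.iii) fails. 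The latter is generically automatic: at a general point $x$ of $\Supp {f'}^*(\Dl \sm Z_0)$, SNC supplies coordinates on $X'$ in which $\Supp {f'}^*t_m$ is a union of coordinate hyperplanes, and generically the pull-backs ${f'}^*t_1,\dots,{f'}^*t_{m-1}$ extend the remaining monomials to a full local coordinate chart of the prescribed monomial form.

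For every $y \in Y_0 \sm Z$, I would choose a coordinate polydisc $W \subset Y$ centered at $y$ so small that $W \cap \Dl$ is empty or the smooth divisor $\{t_m=0\}$ and that $R^qf'_*(K_{X'/Y} \ot \mu^*E)|_W$ is free; the relative compactness of $Y_0$ then extracts a finite subcover $\{W_i\}$, and the properties above persist under concentric shrinking. On each $W_i$ the very definition of a relative K\"ahler form yields a smooth plurisubharmonic $\psi_i$ with $\w_i := \w_{f'} + {f'}^*(\levi \psi_i)$ K\"ahler on $X'_{W_i}$, and $\w_i|_{X'_y} = \w_{f'}|_{X'_y}$ holds because ${f'}^*(\levi \psi_i)$ vanishes on every fiber, settling (b). All items of Set up \ref{local} are then satisfied by the data $(X'_{W_i}, \w_i) \lra W_i$ together with $(\mu^*E, \mu^*h)|_{X'_{W_i}}$, which gives (a).

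For (2), Grauert--Riemenschneider vanishing furnishes $R^i\mu_* K_{X'} = 0$ for $i>0$, and the projection formula then yields $R^i\mu_*(K_{X'} \ot \mu^*E) = 0$ for $i>0$ together with $\mu_*(K_{X'} \ot \mu^*E) \cong K_X \ot E$; a Leray argument produces the canonical identification $R^qf'_*(K_{X'/Y} \ot \mu^*E) \cong \Rq$ invoked in the statement. Over $Y_0 \sm \Dl$ the modification $\mu$ is a biholomorphism above $X \sm f^{-1}(\Dl)$, so $(\mu^*E, \mu^*h)$ is Nakano semi-positive there and $f' : (f')^{-1}(Y_0 \sm \Dl) \lra Y_0 \sm \Dl$ is a smooth K\"ahler morphism. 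The main theorem of \cite{MT2} then delivers a smooth Hodge metric on $R^qf'_*(K_{X'/Y} \ot \mu^*E)|_{Y_0 \sm \Dl}$ with Nakano semi-positive curvature, which transports to the required $g$ on $\Rq|_{Y_0 \sm \Dl}$. The fiberwise identity $\w_i|_{X'_y} = \w_{f'}|_{X'_y}$ combined with the independence recorded after Definition \ref{hodge}, namely that a Hodge metric depends only on the restriction of the K\"ahler form to fibers, guarantees consistency across the $W_i$'s.

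The step I expect to be the main obstacle is the codim-$\ge 2$ claim for item (1.iii), i.e.\ verifying rigorously that the strengthened relative SNC form required by Set up \ref{local} holds outside a codim-$\ge 2$ subset of $Y$ without performing any further modification of $X'$. Ordinary SNC of ${f'}^*(\Dl \sm Z_0)$ guarantees only that ${f'}^*t_m$ is monomial in suitable coordinates; completing to a chart in which the remaining $t_i$'s become themselves coordinate functions requires controlling the image in $Y$ of the locus where the differentials $d({f'}^*t_1),\dots,d({f'}^*t_{m-1})$ degenerate modulo the ideal of components of ${f'}^*t_m$, and making this genericity argument precise is the delicate technical point.
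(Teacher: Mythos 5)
Your proof is correct and follows essentially the same route as the paper's own (very terse) proof: enlarge $Z_0$ by the codimension-$\ge 2$ loci where local freeness or the generic local conditions of Set up \ref{local} fail, cover $Y_0 \sm Z$ by coordinate polydiscs, use the definition of a relative K\"ahler form to produce the $\w_i$ (so that (b) is automatic since ${f'}^*(\levi\psi_i)$ vanishes on fibers), and identify $R^qf'_*(K_{X'/Y}\ot\mu^*E)$ with $\Rq$ via Grauert--Riemenschneider vanishing and a Leray argument. The only quibble is your claim that the SNC hypothesis alone forces every fiber over $Y\sm Z_0$ to be purely $n$-dimensional --- a fiber component of dimension $n+1$ sitting inside the divisor ${f'}^{-1}(\Dl)$ is not excluded when $m\ge 2$ --- but the fiber-dimension jumping locus maps into a codimension-$\ge 2$ subset of $Y$ (otherwise ${f'}^{-1}$ of a codimension-one component would have dimension $n+m$), so it can simply be absorbed into $Z$ along with the locus where the relative normal crossing condition (1.iii) fails, and your argument stands.
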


\begin{proof}
In general, a composition $f \circ \mu$ of $f$ and a blow-up
$\mu : X' \lra X$ along a closed complex submanifold of $X$,
is only locally K\"ahler (\cite[6.2.i-ii]{Tk}).
(We do not know if $f \circ \mu$ is K\"ahler.
This is the point, why we need to mention 
``on every relatively compact open subset $Y_0 \subset Y$'' 
in Theorem \ref{MT}\,(1).)
Hence our modification $f' : X' \lra Y$ is locally K\"ahler,
and we can take a relative K\"ahler form $\w_{f'}$ for $f'$ over $Y_0$.
As we explained before, we have
$R^q(f \circ \mu)_*(K_{X'/Y} \ot \mu^*E) = \Rq$ by Lemma \ref{torsion free}. 
%

To see (1), we note Lemma \ref{torsion free} that 
$R^qf'_*(K_{X'/Y} \ot \mu^*E)$ is locally free in codimension 1 on $Y$.
We then take $Z \supset Z_0$ to be the union of all subvarieties 
along which one of (1) -- (2) in Set up \ref{local} fails for $f'$.
Others are almost clear (by construction).
\end{proof}

The following is a more precise statement of Theorem \ref{MT}\,(1).

\begin{prop} \label{MT'}
Let $f : X \lra Y$, $(E, h)$ and $0 \le q \le n$ be as in 
Set up \ref{basic}.
Let $Y_0 \subset Y$ be a relatively compact open subset. 
After taking a modification $\mu : X' \lra X$ 
(on a neighbourhood of $X_0 = f^{-1}(Y_0)$)
and a relative K\"ahler
form $\w_{f'}$ for $f' = f \circ \mu$ over $Y_0$ as in Lemma \ref{decomp},
the Hermitian metric $g$ on $\Rq|_{Y_0 \sm \Dl}$ in Lemma \ref{decomp}\,(2) 
induces the quotient metric $g^\circ_{\CO(1)|_{\pi^{-1}(Y_0 \sm \Dl)}}$ on 
$\CO(1)|_{\pi^{-1}(Y_0 \sm \Dl)}$ with semi-positive curvature.
Then the smooth Hermitian metric 
$g^\circ_{\CO(1)|_{\pi^{-1}(Y_0 \sm \Dl)}}$ 
extends as a singular Hermitian metric 
$g_{\CO(1)|_{\pi^{-1}(Y_0 \sm S_q)}}$ on 
$\CO(1)|_{\pi^{-1}(Y_0 \sm S_q)}$ with semi-positive curvature.
\end{prop}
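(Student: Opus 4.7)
The strategy is to reduce to Theorem \ref{bdd} via Lemma \ref{decomp}, glue the resulting local extensions, and then extend across a residual codimension-two analytic set using a Hartogs-type theorem for plurisubharmonic functions. First, I would invoke Lemma \ref{decomp} on $Y_0$: after passing to the modification $\mu : X' \lra X$ and fixing a relative K\"ahler form $\w_{f'}$ for $f' = f \circ \mu$ over $Y_0$, this provides a closed analytic subset $Z \subset \Dl$ with $\codim_Y Z \geq 2$ (which we enlarge, harmlessly, so that $S_q \cap Y_0 \subset Z$), an open covering $\{W_i\}$ of $Y_0 \sm Z$ by polydiscs, and K\"ahler forms $\w_i$ on $X'_{W_i}$ such that each $f'|_{X'_{W_i}}$ together with $(\mu^*E, \mu^*h)|_{X'_{W_i}}$ satisfies all conditions of Set up \ref{local}, and moreover $\w_i|_{X'_y} = \w_{f'}|_{X'_y}$ for every $y \in W_i$. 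Applying Theorem \ref{bdd} on each $W_i$ then furnishes a singular Hermitian metric $g^{(i)}_{\CO(1)}$ on $\CO(1)|_{\pi^{-1}(W_i)}$ with semi-positive curvature, extending the smooth quotient metric induced on $\pi^{-1}(W_i \sm \Dl)$ by the Hodge metric associated to $\w_i$.

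The next step is to glue these local extensions. For overlapping $W_i, W_j$ and $y \in W_i \cap W_j \cap (Y_0 \sm \Dl)$, the fibrewise equalities $\w_i|_{X'_y} = \w_{f'}|_{X'_y} = \w_j|_{X'_y}$, together with the invariance observation after Definition \ref{hodge} stating that the Hodge metric depends only on the fibrewise restrictions of the K\"ahler form, imply that the Hodge metrics on $R^qf'_*(K_{X'/Y} \ot \mu^*E)$ induced by $\w_i$ and by $\w_j$ coincide, and hence so do their induced smooth quotient metrics on $\CO(1)$. Thus the two singular extensions $g^{(i)}_{\CO(1)}$ and $g^{(j)}_{\CO(1)}$ agree on the dense open subset $\pi^{-1}(W_i \cap W_j \sm \Dl)$ of $\pi^{-1}(W_i \cap W_j)$. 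Any two singular Hermitian metrics with semi-positive curvature on a line bundle that agree on the complement of a proper analytic subset necessarily coincide, since their local psh weights are uniquely determined by their values off a polar set. Therefore the $g^{(i)}_{\CO(1)}$ patch to a single singular Hermitian metric $g^\bullet_{\CO(1)}$ on $\CO(1)|_{\pi^{-1}(Y_0 \sm Z)}$ with semi-positive curvature.

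Finally, I would extend $g^\bullet_{\CO(1)}$ across $\pi^{-1}(Z \sm S_q)$, which is an analytic subset of codimension at least two in the smooth ambient manifold $\BP(\Rq|_{Y_0 \sm S_q})$. In a local trivialization of $\CO(1)$ near any point of this subset, the weight of $g^\bullet_{\CO(1)}$ is plurisubharmonic on the complement of a codimension $\geq 2$ analytic set; by the classical Hartogs extension theorem for plurisubharmonic functions---in codimension $\geq 2$ the submean inequality on generic transverse discs forces automatic local upper boundedness, and any psh function so bounded admits a unique psh extension---it extends to a psh weight on the full neighborhood. Assembling these local extensions yields the singular Hermitian metric $g_{\CO(1)|_{\pi^{-1}(Y_0 \sm S_q)}}$ on $\CO(1)|_{\pi^{-1}(Y_0 \sm S_q)}$ with semi-positive curvature, as required. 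The deep input is of course Theorem \ref{bdd}; within this particular argument the only delicate point is the gluing step, whose validity rests on the essentially canonical nature of the psh extensions produced by Theorem \ref{bdd}, which is what reconciles the freedom in the choice of the local K\"ahler forms $\w_i$.
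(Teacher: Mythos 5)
Your proposal is correct and follows essentially the same route as the paper: localize via Lemma \ref{decomp}, apply Theorem \ref{bdd} on each polydisc $W_i$, and extend across the codimension-two set by Hartogs-type extension of plurisubharmonic weights. Your explicit gluing argument (using $\w_i|_{X'_y}=\w_{f'}|_{X'_y}$ and the fact that psh weights agreeing off an analytic set coincide) is a correct filling-in of a step the paper leaves implicit.
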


\begin{proof}[Proof of Theorem \ref{MT}]
(1)
It is enough to show Proposition \ref{MT'}.
We use the notations in Lemma \ref{decomp}.
We apply Theorem \ref{bdd} on each $W_i \subset Y_0 \sm Z$.
Then  we see, at this point, the smooth Hermitian metric $\go$ on 
$\CO(1)|_{\pi^{-1}(Y_0 \sm \Dl)}$ extends as a singular Hermitian metric 
$g'_{\CO(1)}$ on $\CO(1)|_{\pi^{-1}(Y_0 \sm Z)}$ with semi-positive curvature.
Then by Hartogs type extension,
the singular Hermitian metric $g'_{\CO(1)}$ on $\CO(1)|_{\pi^{-1}(Y_0 \sm Z)}$
extends as a singular Hermitian metric 
$g_{\CO(1)}$ on $\CO(1)|_{\pi^{-1}(Y_0)}$
with semi-positive curvature.

(2)
We can find a closed analytic subset $Z' \subset \Dl$
of $\codim_Y Z' \ge 2$, containing $Z$,
so that we can describe $f : X \sm f^{-1}(Z') \lra Y \sm Z'$
as a union of Set up \ref{local} as in 
Lemma \ref{decomp} without taking any modifications $\mu : X' \lra X$.
Then we obtain the Hodge metric on $\Rq|_{Y \sm \Dl}$
with respect to $\w_f$ and $h$.
The rest of the proof is the same as (1).
\end{proof}





\section{Semi-Stable Reduction} \label{3}

Now our aim is to show Theorem \ref{bdd}.
We shall devote this and next two sections for the proof.
Throughout these three sections, we shall discuss under Set up \ref{local}
and also \S \ref{ssred} below.

\subsection{Weakly semi-stable reduction} \label{ssred}

(\cite[Ch.\ II]{KKMS}\,\cite[\S 7.2]{KM}\,\cite[\S 6.4]{Vi2}.) \
Let 
$$
	f^*\Dl = \sum_j b_j B_j
$$ 
be the prime decomposition.
Let $Y'$ be another copy of a unit polydisc in $\BC^m$ with coordinates
$t' = (t_1', \ldots, t_{m-1}', t_m')$. 
Let $\ell$ be the least common multiple of all $b_j$.
Let $\tau : Y' \lra Y$ be a ramified covering given by
$(t_1', \ldots, t_{m-1}', t_m') \mapsto (t_1', \ldots, t_{m-1}', {t'_m}^\ell)$,
and $\Xo = X \times_Y Y'$ be the fiber product.
Let $\nu : X' \lra \Xo$ be the normalization, and 
$\mu : X'' \lra X'$ be a resolution of singularities,
which is biholomorphic on the smooth locus of $X'$.
\begin{equation*} 
\begin{CD}
    X''  @>{\mu}>>  X' @>{\nu}>> \Xo= X \times_Y Y' @>{\tauo}>> X \\
    @Vf''VV           @Vf'VV           @V{\fo}VV                @VVfV  \\ 
    Y'   @>>{id}>   Y'   @>>{id}>   Y'               @>>{\tau}>   Y
\end{CD}
\end{equation*} 

Then there are natually induced objects:\
$\tauo : \Xo\lra X, \tau' : X' \lra X, \tau'' : X'' \lra X$, 
$\fo : \Xo\lra Y', f' : X' \lra Y', f'' : X'' \lra Y'$,
$E^\circ = {\tauo}^*E, E' = {\tau'}^*E, E'' = {\tau''}^*E$,
and $h''= {\tau''}^*h$ the induced Hermitian metric on $E''$ 
with Nakano semi-positive curvature.
We denote by $j_{\Xo} : \Xo \subset X \times Y'$ the inclusion map,
and by $p_X : X \times Y' \lra X$ and $p_{Y'} : X \times Y' \lra Y'$ 
the projections.
We may also denote by 
$$
	F = \Rq, \ \ \ F' = \FF, 
$$
where $F$ is globally free (Set up \ref{local}), 
and $F'$ is torsion free (Lemma \ref{torsion free}).
Let $\Dl' = \{t'_m = 0\} \subset Y'$.
The discriminant loci of $\fo, f' , f''$ are
contained in $\Dl'$.
We can write 
$$
	{f''}^*\Dl' = \sum_j B''_j + B''_{exc},
$$
where $\sum B''_j$ is the prime decomposition of 
the non-$\mu$-exceptional divisors in ${f''}^*\Dl'$, and 
$B''_{exc}$ is the sum of $\mu$-exceptional divisors in ${f''}^*\Dl'$.
As we will see in Lemma \ref{KM7.23}, 
all coefficients in $\sum B''_j$ are 1.
(As in \cite{KKMS}, ${f''}^*\Dl'$ may be semi-stable in codimension 1.
However we do not need this stronger result for $B''_{exc}$.)

We add a remark on the choice of the smooth model $X''$.
We can assume, possibly after replacing $Y$ by a smaller polydisc,
that $X''$ can be obtained in the following way.
We take an embedded resolution 
$\del : \wtil{X \times Y'} \lra X \times Y'$ of $\Xo$, 
by a finite number of blowing-ups along smooth centers,
which are biholomorphic outside $\Sing \Xo$.
Let us denote by $X'' \subset \wtil{ X \times Y'}$ the smooth model
of $\Xo$, and by $\mu : X'' \lra X'$ the induced morphism.
We may assume further that $\Supp {f''}^*\Dl'$ is simple normal crossing.


\subsection{Direct image sheaves and analysis of singularities}

We will employ algebraic arguments to compair direct image sheaves
on $Y$ and $Y'$, and to study the singularities on $X'$.
We start with an elementary remark.

\begin{lem} \label{locemb}
The normal variety $X'$ is smooth on 
$X' \sm {\tau'}^{-1}(\Sing f^{-1}(\Dl))$,
and the induced map $j_\Xo \circ \nu : X' \lra X \times Y'$ is 
locally embedding around every point on 
$X' \sm {\tau'}^{-1}(\Sing f^{-1}(\Dl))$.
\end{lem}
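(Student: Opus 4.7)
The plan is to verify both assertions by passing to the explicit local normal form provided by Set up \ref{local}, in which $f$ is a monomial in suitable coordinates, and then to factor the defining equation of the fibre product. Fix $x' \in X' \sm {\tau'}^{-1}(\Sing f^{-1}(\Dl))$, set $x = \tau'(x') \in X$, and pick coordinates $(z_1,\ldots,z_{n+m})$ near $x$ so that $t_j = z_{n+j}$ for $j<m$ and $t_m = z_{n+m}^{b_{n+m}} \prod_{j=1}^n z_j^{b_j}$. I split into the two cases $x \notin f^{-1}(\Dl)$ and $x$ a smooth point of $f^{-1}(\Dl)$.

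If $x \notin f^{-1}(\Dl)$, then $t_m(x) \neq 0$, so $\tau$ is \'etale near $f(x)$, whence $\tauo : \Xo \lra X$ is \'etale near the fibre over $x$. Thus $\Xo$ is already smooth there, $\nu$ is a local isomorphism, and $j_\Xo$ is a closed embedding; both assertions are immediate.

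The substantive case is when $x$ is a smooth point of $f^{-1}(\Dl)$. Then the SNC assumption on $\Supp f^*\Dl$ forces exactly one exponent $b_i$ (for a unique index $i \in \{1,\ldots,n,n+m\}$) to be positive at $x$; the other factors $z_j^{b_j}$ with $b_j > 0$ do not vanish at $x$ and are therefore units on a small polydisc neighbourhood. Collecting them gives $t_m = z_i^{b_i}\, u$ with $u$ a nonvanishing holomorphic function. Choose a holomorphic $b_i$-th root $v$ of $u$ on the polydisc and set $w = v\, z_i$, which is a genuine new holomorphic coordinate since $v(x) \neq 0$. Because $b_i \mid \ell$, write $\ell = b_i k$; then in $X \times Y'$ the defining relation of $\Xo$ coming from the last coordinate factors as
$$
w^{b_i} - {t_m'}^{\ell} \; = \; \prod_{\zeta^{b_i} = 1} \bigl(w - \zeta\, {t_m'}^k\bigr).
$$
Combined with the linear relations $z_{n+j} = t_j'$ ($j<m$), this realises $\Xo \subset X \times Y'$ locally as the union of $b_i$ smooth hypersurfaces meeting only along the codimension-two locus $\{w = t_m' = 0\}$. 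Normalization separates these branches, so near $x'$ the map $\nu$ is an isomorphism onto one of them, and $j_\Xo \circ \nu$ exhibits a neighbourhood of $x'$ as a smooth closed submanifold of $X \times Y'$; this yields both conclusions simultaneously.

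The only substantive issue I anticipate is the unit factor $u$ that unavoidably appears whenever several $b_j$ are positive but only one component of $\{t_m = 0\}$ passes through $x$. Absorbing it into a $b_i$-th root, which is possible because we work on a simply connected polydisc, is the one analytic manipulation needed and reduces the problem to the purely formal factorization of $w^{b_i} - {t_m'}^{\ell}$ displayed above; everything else is a bookkeeping application of the defining relations of the fibre product together with the elementary fact that the normalization of a transverse union of smooth hypersurfaces is the disjoint union.
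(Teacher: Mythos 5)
Your proposal is correct and follows essentially the same route as the paper: reduce to the local monomial normal form at a smooth point of $f^{-1}(\Dl)$ (absorbing the unit factor into a new coordinate, which the paper does implicitly via its ``appropriate local coordinate''), factor $w^{b_i}-{t_m'}^{\ell}$ into $b_i$ smooth branches of the fibre product, and observe that normalization is their disjoint union, each of which embeds in $X\times Y'$. The only cosmetic differences are that you treat the off-discriminant case explicitly and allow the distinguished coordinate to be any of $z_1,\ldots,z_n,z_{n+m}$ rather than relabelling it as the last one.
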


\begin{proof}
We take a smooth point $x_0$ of $f^{-1}(\Dl)$.
If $x_0 \in B_j$ in $f^*\Dl = \sum b_jB_j$, the map $f$ is given by
$z = (z_1, \ldots, z_{n+m}) \mapsto 
t = (z_{n+1}, \ldots, z_{n+m-1}, z_{n+m}^{b_j})$
for an appropriate local coordinate $(U; z = (z_1, \ldots, z_{n+m}))$
around $x_0$.
Then $\Uo = U \times_Y Y'$ is defined by 
$\Uo = \{(z, t') \in U \times Y';\ f(z) = \tau(t')\}$,
namely $z_{n+1} = t'_1, \ldots, z_{n+m-1} = t'_{m-1}, 
z_{n+m}^{b_j} = {t'_m}^\ell$.
We write $\ell = b_jc_j$ with a positive integer $c_j$.
Let $\ep$ be a $b_j$-th primitive root of unity.
Then $\Uo$ is a union of 
$$
	U^\circ_p = \{(z, t') \in U \times Y';\  z_{n+1} = t'_1, \ldots, 
		z_{n+m-1} = t'_{m-1}, z_{n+m} = \ep^p {t'_m}^{c_j} \}
$$
for $p = 1, \ldots, b_j$.
Each $U^\circ_p$ itself is smooth, and the normalization $U'$ of $\Uo$
is just a disjoint union $\amalg_{p=1}^{b_j} U^\circ_p$.
\end{proof}

The normal variety $X'$ is almost smooth.
For example the following properties are known.

\begin{lem} \label{KM7.23}
\cite[7.23]{KM} (\cite[Ch.\ II]{KKMS}).
(1) The canonical divisor $K_{X'}$ is Cartier,
(2) $X'$ has at most toric, abelian quotient singularities,
(3) a pair $(X', 0)$ is canonical,  and a pair $(X', D')$ is log-canonical, 
where $D' = {f'}^*\Dl'$ which is reduced.
\end{lem}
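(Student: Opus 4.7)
The plan is to reduce (1)--(3) to an explicit local toric computation. By the relative normal crossing hypothesis of Set up \ref{local} and Lemma \ref{locemb}, around any point $x \in X$ where $k$ components $B_{j_1},\ldots,B_{j_k}$ of $f^*\Dl$ meet with multiplicities $b_{j_1},\ldots,b_{j_k}$, one can choose coordinates so that $f$ locally reads $t_m = z_{j_1}^{b_{j_1}} \cdots z_{j_k}^{b_{j_k}}$, while the remaining $t_i$'s come from the other $z$-coordinates. Therefore $\Xo = X \times_Y Y'$ is cut out in the smooth ambient space by the single toric binomial equation $z_{j_1}^{b_{j_1}} \cdots z_{j_k}^{b_{j_k}} = {t'_m}^{\ell}$, times a transverse smooth factor, and $X'$ is its normalization. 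Away from the intersection of at least two $B_{j_i}$'s, Lemma \ref{locemb} already gives smoothness, so it suffices to analyze this toric local model.

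The central step is to read off the normalization combinatorially. The hypersurface is an affine toric variety attached to an unsaturated sub-semigroup of $M := \mathbb{Z}\langle z_{j_1},\ldots,z_{j_k}, t'_m\rangle / \mathbb{Z}(b_{j_1},\ldots,b_{j_k},-\ell)$, and its normalization $U' \subset X'$ corresponds to the saturated cone $\sg$ in $N_{\BR} = \mathrm{Hom}(M,\BR)$ cut out by the positivity of the images of $z_{j_i}$ and $t'_m$. Assertion (2) is then immediate: every affine normal toric variety is a finite abelian quotient of a smooth toric variety, the group being the cokernel of $\mathbb{Z}\langle \text{primitive generators of } \sg\rangle \hookrightarrow N$. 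The key numerical observation for (1) is that all primitive ray generators of $\sg$ pair to $1$ with the linear functional $v \mapsto \langle v, \ol{t'_m}\rangle$, where $\ol{t'_m}$ is the image of $t'_m$ in $M$; this is exactly what the choice $\ell = \mathrm{lcm}(b_j)$ achieves. By the toric formula $K_{U'} = -\sum D_\rho$, it forces $K_{X'}$ to be Cartier, trivialized locally by the character $\chi^{-\ol{t'_m}}$.

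For (3), with the hyperplane property in hand, the Weil divisor ${f'}^*\Dl' = \sum_\rho \langle v_\rho, \ol{t'_m}\rangle D_\rho$ is reduced (it equals the full toric boundary $\sum D_\rho$), so $K_{X'} + D' \equiv 0$. The standard toric discrepancy formula on any toric resolution $\mu : X'' \to X'$ then gives the log discrepancy of a ray with primitive generator $v$ as $\langle v, \ol{t'_m}\rangle$; positivity of $\ol{t'_m}$ on $\sg$ yields log canonicity of $(X', D')$. Canonicity of $(X', 0)$ is the same statement shifted by $1$, i.e.\ $\langle v, \ol{t'_m}\rangle \geq 1$ for every nonzero lattice point of $\sg$, which says that the ``bottom'' of $\sg$ is the simplex spanned by its primitive generators; this is a combinatorial consequence of the binomial origin of $\sg$.

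The main obstacle I expect is the bookkeeping that identifies $\sg$ correctly and verifies that its primitive ray generators all lie on the hyperplane $\{\langle \cdot, \ol{t'_m}\rangle = 1\}$: this ``semi-stability in codimension one'' is the numerical heart of the statement, and the step where the choice $\ell = \mathrm{lcm}(b_j)$ enters decisively. Everything else is a standard consequence of the toric formalism of \cite[Ch.\ II]{KKMS}, as compiled in \cite[7.23]{KM}.
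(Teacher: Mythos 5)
The paper does not prove this lemma at all: it is stated purely as a citation of \cite[7.23]{KM} (ultimately \cite[Ch.\ II]{KKMS}), and the surrounding text only uses its conclusions. What you have written is, in essence, the standard toric proof of that cited result, so your route differs from the paper only in that you supply the computation the authors delegate to the references. Your computation is sound: in the local model the relevant lattice is $N=\{v\in\mathbb{Z}^{k+1}:\sum_i b_{j_i}v_i=\ell v_{k+1}\}$, the cone $\sg\subset N_\BR$ cut out by $v_i\ge 0$ has exactly the $k$ extreme rays with primitive generators $v^{(i)}=(0,\dots,\ell/b_{j_i},\dots,0,1)$ (primitive because the last coordinate is $1$, and integral precisely because $b_{j_i}\mid\ell$, which is where $\ell=\mathrm{lcm}(b_j)$ enters), and these all lie on the hyperplane $\langle\,\cdot\,,\ol{t'_m}\rangle=1$; this gives $K_{X'}=\mathrm{div}(\chi^{-\ol{t'_m}})$, hence (1) and the reducedness of $D'$, while the observation that any nonzero $v\in\sg\cap N$ has $v_{k+1}\ge 1$ (since $v_{k+1}=0$ forces $v=0$) yields (3) via the toric discrepancy formula. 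Two points should be made explicit to close the argument. First, the claim that every affine normal toric variety is a finite abelian quotient of a smooth one is false in general; you need $\sg$ to be simplicial, which does hold here because $\sg$ is $k$-dimensional with exactly $k$ extreme rays, so (2) is fine but only after this check. Second, when $\gcd(b_{j_1},\dots,b_{j_k})>1$ the local fiber product $\{z^{b}={t'_m}^{\ell}\}$ is reducible (the quotient lattice $\mathbb{Z}^{k+1}/\mathbb{Z}(b,-\ell)$ has torsion of exactly that order), so the normalization is a disjoint union of components and your toric description applies to each component with character lattice the torsion-free quotient; this phenomenon is already visible in the codimension-one case treated in Lemma \ref{locemb}. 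Neither point affects the conclusions, and your identification of the hyperplane property of the ray generators as the numerical heart of the lemma is exactly right.
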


Since canonical singularities are Cohen-Macaulay,
combined with Lemma \ref{KM7.23}\,(1), we see $X'$ is Gorenstein
(refer \cite[\S 2.3]{KM} including definitions).

\begin{lem} \label{inj}
(cf.\ \cite[Lemma 3.2]{Vi1}\,\cite[V.3.30]{N2}.) \ 
There exists a natural inclusion map
$$
  \vph : F' = \FF \lra  \tau^*F = \tau^* \F,
$$
which is isomorphic over $Y' \sm \Dl'$.
\end{lem}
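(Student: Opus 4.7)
The plan is to build $\vph$ through the intermediate fiber product $\Xo$, factoring via the Grothendieck dualizing sheaves on the singular varieties $X'$ and $\Xo$, and then to force injectivity from the torsion-freeness of $F'$. Since $f$ is flat (Set up \ref{local}(1.i)) and $\tau:Y' \lra Y$ is finite flat between smooth polydiscs, $\Xo$ is Cohen--Macaulay (as a finite flat cover of the smooth $X$), and flat base change yields a canonical isomorphism
$$\tau^* F \;\cong\; R^q\fo_*(\w_{\Xo/Y'} \ot E^\circ), \qquad \w_{\Xo/Y'} \,=\, {\tauo}^*K_{X/Y}.$$

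Next I would compute $F'$ by two Leray spectral sequences. Since $X'$ is Gorenstein with canonical singularities (Lemma \ref{KM7.23}), Grauert--Riemenschneider applied to the K\"ahler resolution $\mu$, together with the standard discrepancy formula, gives $R^{>0}\mu_*K_{X''}=0$ and $\mu_*K_{X''}=K_{X'}$; the projection formula for $E''=\mu^*E'$ upgrades these to the analogous statements for $K_{X''}\ot E''$. As $\nu$ is finite, $R^{>0}\nu_*$ vanishes on coherent sheaves, so the Leray spectral sequence for $f''=\fo\circ\nu\circ\mu$ collapses to
$$F' \;\cong\; R^q\fo_*\bigl(\nu_*K_{X'} \ot E^\circ \ot {\fo}^*K_{Y'}^{-1}\bigr).$$
The heart of the construction is then the Grothendieck trace for the finite birational $\nu:X'\lra\Xo$, a natural morphism $\mathrm{Tr}_\nu : \nu_*K_{X'} \lra \w_{\Xo}$ which is an isomorphism wherever $\nu$ is (in particular over $\Xo \sm \Sing \Xo$). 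Because $\w_{\Xo}$ is torsion-free (Cohen--Macaulayness of $\Xo$), $\mathrm{Tr}_\nu$ is injective. Tensoring by the locally free $E^\circ \ot {\fo}^*K_{Y'}^{-1}$ and applying $R^q\fo_*$, and then composing with the flat-base-change isomorphism above, defines the desired $\vph : F' \lra \tau^* F$.

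Over $Y' \sm \Dl'$, the morphism $\mu\circ\nu$ is a biholomorphism and $\tau$ is \'etale, so every step in the composition is an isomorphism there; hence $\vph$ is an isomorphism on $Y'\sm\Dl'$ and $\Ker\vph$ is a torsion subsheaf of $F'$ supported on $\Dl'$. But $F'$ is torsion-free on $Y'$ by Lemma \ref{torsion free}(1) applied to the K\"ahler fiber space $f'':X''\lra Y'$ with the Nakano semi-positive bundle $(E'',h'')$, so $\Ker\vph=0$ and $\vph$ is the claimed inclusion. The step I expect to be the main obstacle is the careful verification that the Grothendieck trace $\nu_*K_{X'}\hookrightarrow\w_{\Xo}$ is naturally compatible with the flat base change along $\tau$, so that $\vph$ is globally well-defined and reduces to the evident identification over the smooth locus -- this requires juggling dualizing sheaves on the singular intermediate varieties $\Xo$ and $X'$ simultaneously.
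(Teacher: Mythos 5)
Your proposal is correct and follows essentially the same route as the paper: the map is built from the evaluation/trace homomorphism $\nu_*\w_{X'/Y'}=\mathcal{H}om_\Xo(\nu_*\CO_{X'},\w_{\Xo/Y'})\lra\w_{\Xo/Y'}$ for the finite normalization $\nu$, combined with $R^{>0}(\nu\circ\mu)_*K_{X''}=0$ and $K_{X''}=\mu^*K_{X'}+C$ to identify $F'$, flat base change along $\tau$ to identify $\tau^*F$, and torsion-freeness of $F'$ to promote generic injectivity to injectivity. One inessential slip: torsion-freeness of the \emph{target} $\w_\Xo$ does not by itself give injectivity of $\mathrm{Tr}_\nu$ (you would want torsion-freeness of the source $\nu_*K_{X'}$ plus the fact that the kernel is supported where $\nu$ fails to be an isomorphism), but this injectivity is not needed anywhere, since the final injectivity of $\vph$ is correctly deduced from the torsion-freeness of $\FF$.
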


\begin{proof}
Recall that dualizing sheaves when they exist are flat and compatible 
with any base change \cite[(9)]{Kl}.
The morphism $\nu$ being finite, there exists a dualizing sheaf 
$\w_{X'/\Xo}$ such that 
$\nu_* \w_{X'/\Xo} = \mathcal{H}om_\Xo (\nu_* \CO_{X'}, \CO_\Xo)$.
By base change, $\w_{\Xo/Y'}= {\tauo}^* K_{X/Y}$ is an invertible 
dualizing sheaf for $\fo$. 
Because $Y'$ is smooth, $\w_\Xo = \w_{\Xo/Y'} \ot {\fo}^* K_{Y'}$ 
is an invertible dualizing sheaf for $\Xo$.
In particular $\Xo$ is Gorenstein, 
in fact $\Xo$ is locally complete intersection. 
Now, by composition \cite[(26.vii)]{Kl}, 
$\w_{X'/Y'} = \w_{X'/\Xo} \ot \nu^*\w_{\Xo/Y'}$ is a dualizing sheaf for $f'$.
Because $\w_{\Xo/Y'}$ is locally free, the projection formula reads
$\nu_* \w_{X'/Y'}
= (\nu_* \w_{X'/\Xo}) \ot \w_{\Xo/Y'}
= \mathcal{H}om_\Xo (\nu_* \CO_{X'}, \CO_\Xo \ot \w_{\Xo/Y'})
= \mathcal{H}om_\Xo (\nu_* \CO_{X'}, \w_{\Xo/Y'})$.
Then we have a natural homomorphism $\ga : \nu_*\w_{X'/Y'} \lra \w_{\Xo/Y'}$.
Since $X'$ is Gorenstein and canonical (Lemma \ref{KM7.23}),
we have $K_{X''} = \mu^*K_{X'} + C$
for an effective $\mu$-exceptional divisor $C$, and hence
$\nu_*\mu_*K_{X''/Y'} = \nu_*\mu_*(\mu^*\w_{X'/Y'} \ot \CO_{X''}(C)) 
= \nu_* \w_{X'/Y'}$.
Then the map $\ga$ induces 
$\nu_*\mu_*K_{X''/Y'} \lra \w_{\Xo/Y'} = {\tauo}^*K_{X/Y}$.
We apply $R^qf^\circ_*$ to obtain a map
$R^qf^\circ_*(\nu_*\mu_*(K_{X''/Y'} \ot E'')) 
\lra R^qf^\circ_*({\tauo}^* (K_{X/Y} \ot E))$.

Since $\nu \circ \mu : X'' \lra \Xo$ is birational, we have 
$R^q(\nu \circ \mu)_*K_{X''} = 0$ for $q > 0$ (\cite[6.9]{Tk}).
Noting $E'' = (\nu \circ \mu)^* E^\circ$, we have
$R^qf''_*(K_{X''} \ot E'') 
= R^qf^\circ_*(R^0(\nu \circ \mu)_*(K_{X''} \ot E''))$.
This gives $R^qf^\circ_*(\nu_*\mu_*(K_{X''/Y'} \ot E'')) = \FF$.
On the other hand, since $\tau$ is flat, the base change map
$\tau^* \F \lra R^qf^\circ_*({\tauo}^* (K_{X/Y} \ot E))$ is isomorphic.
Thus we obtain a sheaf homomorphism
$$
  \vph : \FF \lra \tau^* \F.
$$
It is not difficult to see $\vph$ is isomorphic over $Y' \sm \Dl'$,
and hence the kernel of $\vph$ is a torsion sheaf on $Y'$.
The injectivity of $\vph$ is then a consequence of the torsion
freeness of $\FF$, by Lemma \ref{torsion free}.
\end{proof}

As we saw in Lemma \ref{KM7.23}, the singularities of $X'$ are mild.
However we need informations not only on the canonical sheaf of $X'$,
but also on the sheaf of holomorphic $p$-forms on $X'$. 
There are two canonical choices of the definition on a normal variety.
Fortunately both of them coincide for our $X'$.
In the rest of this subsection, $p$ denotes a non-negative integer.

\begin{dfn}
For every $p$, we define the sheaf of holomorphic $p$-forms on $X'$ by 
$\Om_{X'}^p := j_* \Om_{X'_{reg}}^p$,
where $j : X'_{reg} \lra X'$ is the open immersion of the regular part.
\end{dfn}

\begin{lem} \label{toric} 
\cite[1.6]{Da}\,\cite[1.11]{S}.  
$\mu_*\Om_{X''}^p = \Om_{X'}^p$ holds.
\end{lem}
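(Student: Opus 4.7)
The plan is to reduce the identity to the cited toric extension theorems. Since $\mu$ is biholomorphic over the smooth locus $X'_{\mathrm{reg}}$ of $X'$, both sheaves in question restrict to $\Om_{X'_{\mathrm{reg}}}^p$ there. As $\Om_{X'}^p$ is defined as $j_*\Om_{X'_{\mathrm{reg}}}^p$, the inclusion $\mu_*\Om_{X''}^p \subseteq \Om_{X'}^p$ follows at once from the universal property of $j_*$. The whole content of the lemma is therefore the reverse inclusion, and since both sides are coherent I can work locally near a fixed singular point $x' \in X'$.

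For the reverse inclusion my plan is to pass to a local toric model. By Lemma \ref{KM7.23}\,(2), a neighbourhood of $x'$ in $X'$ is biholomorphic to an abelian quotient of a smooth toric variety, which I write as $V/G$ with $V$ smooth and $G$ a finite abelian group acting. A germ of $\Om_{X'}^p$ at $x'$ corresponds to a $G$-invariant holomorphic $p$-form on the open set $V_0 \subset V$ lying over $(V/G)_{\mathrm{reg}}$; since $V \sm V_0$ has codimension $\ge 2$ in the smooth variety $V$, Hartogs' theorem extends this to a $G$-invariant holomorphic $p$-form on all of $V$. After a further smooth blow-up of $X''$ if necessary (which does not alter $\mu_*\Om_{X''}^p$, as $p$-forms extend across an exceptional divisor of a blow-up with smooth centre in a smooth variety), I may assume that on the chosen neighbourhood $\mu$ is dominated by a toric resolution $\rho : W \lra V/G$. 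The Danilov--Steenbrink identity $\rho_*\Om_W^p = \Om_{V/G}^p$ (\cite[1.6]{Da}\,\cite[1.11]{S}) then yields the reverse inclusion near $x'$ and concludes the proof.

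The main obstacle, modest as it is, lies in arranging this local toric comparison: namely, that the analytic resolution $\mu$ produced by the general embedded-resolution procedure of \S \ref{ssred} can be compared, on a neighbourhood of each singular point of $X'$, with some toric resolution of the local abelian quotient singularity. I would handle this by dominating both $\mu$ and a fixed toric resolution by a common further smooth blow-up, and then invoking the invariance of $\mu_*\Om^p$ under such blow-ups to transport the Danilov--Steenbrink identity onto $X''$. No new analytic input is required beyond the cited toric extension results.
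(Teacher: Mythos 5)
Your argument is correct and rests on the same foundation as the paper's: the identity is local on $X'$, the easy inclusion $\mu_*\Om_{X''}^p\subseteq\Om_{X'}^p$ comes from $\mu$ being biholomorphic over $X'_{reg}$ together with the definition $\Om_{X'}^p=j_*\Om_{X'_{reg}}^p$, and the substance is supplied by the local toric structure of $(X',x')$ from Lemma \ref{KM7.23} together with the Danilov--Steenbrink theorem. Where you diverge is that the paper reads \cite[1.6]{Da} (and \cite[1.11]{S}) as already valid for an \emph{arbitrary} resolution of a toric germ, so its proof ends with the citation; you instead assume the cited identity only for a toric resolution $\rho$ of the local model and then transport it to the given $\mu$ by dominating both by a common smooth model and using that $\sigma_*\Om^p$ is unchanged under proper bimeromorphic morphisms of manifolds (Hartogs across a set of codimension $\ge 2$). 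That extra reduction is sound -- it is essentially how the ``any resolution'' form of the cited theorems is proved -- but it is redundant given the strength in which the paper invokes them; what it buys is a more self-contained argument that needs the toric input only for one explicitly constructed resolution. Two small points of wording: the phrase ``$\mu$ is dominated by a toric resolution'' is backwards relative to what you actually carry out (you dominate both $\mu$ and $\rho$ by a common model, which is the right construction), and the Hartogs extension of the $G$-invariant form to all of $V$ is never used -- the extension over the resolution is exactly what Danilov--Steenbrink provides for $\rho$, and the common domination then hands it to $X''$.
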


Due to \cite[1.6]{Da}, this lemma is valid not only for our 
$X'$ and $X''$ here, but also more general toric variety $X'$ and 
any resolution of singularities $\mu : X'' \lra X'$.
Our $X'$ is not an algebraic variety, however at every point $x' \in X'$,
there exists an affine toric variety $Z$ with a point $0$
such that $(X', x') \cong (Z, 0)$ as germs of complex spaces.
Hence this lemma follows from \cite[1.6]{Da}.
This is also implicitly contained in the proof of \cite[Lemma 3.9]{Ishida}.

Another key property which we will use, due to Danilov, is the following

\begin{lem} \label{CM}
The sheaf $\Om_{X'}^p$ is Cohen-Macaulay (CM for short),
i.e., at each point $x' \in X'$, the stalk $\Om_{X', x'}^p$ is 
CM as a module over a noetherian local ring 
$(\CO_{X', x'}, \mathfrak m_{X', x'})$.
\end{lem}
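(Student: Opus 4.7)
The plan is to reduce the assertion to a local, algebraic statement about affine normal toric varieties, where Danilov's theorem applies directly. Cohen-Macaulayness is a local property, so it suffices to verify at each $x' \in X'$ that the stalk $\Om_{X', x'}^p$ is CM as an $\CO_{X', x'}$-module. From the explicit local description in the proof of Lemma \ref{locemb} together with Lemma \ref{KM7.23}(2), one has, at each $x' \in X'$, a germ isomorphism of complex spaces $(X', x') \cong (Z, 0)$ where $Z$ is an affine normal toric variety (concretely, a cyclic quotient of a smooth chart by the $b_j$-th roots of unity acting on $z_{n+m}$). Since $\Om_{X'}^p$ is defined intrinsically as $j_* \Om^p$ on the smooth locus, this germ isomorphism carries $\Om_{X', x'}^p$ to the analytic stalk $\Om_{Z, 0}^p$.

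Second, I would pass from the analytic to the algebraic setting on $Z$. The completion of the analytic local ring at $0 \in Z$ coincides with the completion of the algebraic local ring of $Z$ at $0$, and the same holds for the stalks of $\Om^p$: both analytic and algebraic versions admit the identical monomial (semigroup-graded) description in terms of the cone defining $Z$, obtained as $\mu_* \Om^p$ from any toric resolution $\mu : Z'' \lra Z$ via Lemma \ref{toric}. Since CMness of a finitely generated module over a Noetherian local ring is preserved and detected under faithfully flat completion, it suffices to establish the algebraic statement on $Z$.

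The final step is then an appeal to Danilov's theorem \cite{Da}, which asserts precisely that $\Om_Z^p$ is Cohen-Macaulay on any affine normal toric variety $Z$; the proof there proceeds by explicit local cohomology computation adapted to the cone combinatorics, using the toric decomposition of $\mu_* \Om_{Z''}^p$ in terms of characters of the torus. The only genuinely delicate point in this outline is the first reduction to the toric germ: one must check that the intrinsic definition $\Om_{X'}^p = j_* \Om^p$ on the regular part is compatible with the chosen germ isomorphism, but this is automatic since $j_*$ applied to a sheaf on the smooth locus is manifestly functorial with respect to isomorphisms of germs of normal complex spaces. No further analytic input beyond the passage to completions is required, so apart from citing \cite{Da} the argument is essentially bookkeeping.
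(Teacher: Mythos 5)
Your overall strategy --- localize, identify the germ $(X',x')$ with the germ of an affine toric variety, transfer the intrinsic sheaf $\Om^p = j_*\Om^p_{reg}$ through that identification, and invoke Danilov --- is the same as the paper's. The analytic-to-algebraic reduction via completions is a reasonable piece of extra care that the paper leaves implicit.

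However, there is a genuine gap at the final step: you quote Danilov's theorem as saying that $\Om_Z^p$ is Cohen--Macaulay on \emph{any} affine normal toric variety $Z$. That is not the statement of the theorem; in that generality it is Danilov's \emph{conjecture}, not a result one can cite. What is actually proved (and what the paper uses, via \cite[3.10]{Oda}) is the Cohen--Macaulayness of $\Om_Z^p$ for \emph{simplicial} toric varieties. The paper therefore inserts an essential intermediate step that your argument omits: by Lemma \ref{KM7.23}\,(2) the germ $(X',x')$ is an abelian quotient singularity, hence the corresponding cone $\sg \subset N_\BR$ is simplicial (\cite[3.7]{Dais}), and only then does Danilov's theorem apply. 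Without verifying simpliciality, the appeal to \cite{Da} does not establish the lemma. A secondary, more minor inaccuracy: the ``concrete'' local model you extract from Lemma \ref{locemb} (a cyclic quotient acting on $z_{n+m}$ over a smooth point of $f^{-1}(\Dl)$) is not where the singularities of $X'$ live --- Lemma \ref{locemb} shows $X'$ is in fact \emph{smooth} there, the normalization being a disjoint union of smooth charts; the genuinely singular (toric, abelian quotient) points lie over $\Sing f^{-1}(\Dl)$ and are governed by the KKMS/Koll\'ar--Mori analysis, not by that explicit chart.
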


\begin{proof}
Let $x' \in X'$.
Since $X'$ has a toric singularity at $x'$,
there exists an affine toric variety $Z$ with a point $0$
such that $(X', x') \cong (Z, 0)$ as germs of complex spaces.
Let $\sg$ be a cone in a finite dimensional vector space $N_\BR$ 
corresponding $Z$ (or a fan $F$ in $N_\BR$ corresponding $Z$).
Since $(X', x') \cong (Z, 0)$ is an abelian quotient singularity
(Lemma \ref{KM7.23}), the cone $\sg$ is simplicial (\cite[3.7]{Dais}).
Then by a result of Danilov (\cite[3.10]{Oda}), 
$\Om_{X'}^p$is CM.
%
\end{proof}

\begin{cor} \label{CMc}
Let $y' \in \Dl'$, and let $(t'_1, \ldots, t'_m)$ be 
(other) coordinates of $Y'$ centered at at $y'$ such that 
$\Dl' = \{t'_m = 0\}$.
Then the central fiber $X'_{y'} \subset X'$ defined by
${f'}^*t'_1 = \cdots =  {f'}^*t'_m = 0$ as a complex subspace 
is pure $n$-dimensional and reduced (\cite[7.23\,(1)]{KM}).
Let $x' \in X'_{y'}$.
Let $s_{m+1}, \ldots, s_{m+n} \in 
\mathfrak m_{X', x'} \subset \CO_{X', x'}$
be a sequence of holomorphic functions such that
$\dim_{x'} (X'_{y'} \cap \{s_{m+1} = \cdots = s_{m+k} = 0\})
= n-k$ for any $1 \le k \le n$.
Then ${f'}^*t'_1, \ldots, {f'}^*t'_m, s_{m+1}, \ldots, s_{m+n}$
is an $\Om_{X', x'}^p$-regular sequence.
\end{cor}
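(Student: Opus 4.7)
The plan is to combine the Cohen-Macaulayness of $\Om^p_{X',x'}$ (Lemma \ref{CM}) with the classical commutative-algebra fact that every system of parameters of a Cohen-Macaulay module is a regular sequence on that module. Concretely, I would reduce the corollary to establishing two auxiliary statements: (a) $\Om^p_{X',x'}$ has Krull dimension $n+m$ over $\CO_{X',x'}$, and (b) the prescribed $n+m$ functions generate an $\fm_{X',x'}$-primary ideal, i.e.\ form a system of parameters for $\CO_{X',x'}$ at $x'$. Once (a) and (b) are in hand, the CM property of Lemma \ref{CM} delivers the regular-sequence conclusion immediately.

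For (a) I would argue that $\Om^p_{X'} = j_*\Om^p_{X'_{reg}}$ has full support on the irreducible variety $X'$, simply because $\Om^p_{X'_{reg}}$ is locally free of positive rank on the $(n+m)$-dimensional manifold $X'_{reg}$; hence $\dim \Om^p_{X',x'}=\dim \CO_{X',x'}=n+m$. For (b) the first $m$ functions ${f'}^*t'_1,\ldots,{f'}^*t'_m$ cut out, as a reduced and pure $n$-dimensional complex subspace, the central fibre $X'_{y'}$ (using the parenthetical assertion of \cite[7.23]{KM}). The hypothesis on the $s_j$ then forces $\dim_{x'}(X'_{y'}\cap\{s_{m+1}=\cdots=s_{m+n}=0\})=0$, so the full ideal generated by our sequence is $\fm_{X',x'}$-primary, which is exactly the system of parameters condition.

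With (a) and (b) verified, I would invoke the standard criterion (e.g.\ Matsumura, \emph{Commutative Ring Theory}, Theorem 17.4): for a finitely generated module $M$ over a Noetherian local ring $(R,\fm)$ with $\dim M=\dim R$, $M$ is Cohen-Macaulay if and only if every system of parameters of $R$ is an $M$-regular sequence. Applied to $M=\Om^p_{X',x'}$ and to the given sequence, this yields the conclusion, and it automatically supplies the intermediate regularity on the partial quotients. I do not anticipate a genuine obstacle: the substantive input is the CM property already isolated in Lemma \ref{CM}, together with the partial dimension conditions built into the hypothesis on the $s_j$, and the rest is bookkeeping about support and dimension of the reflexive sheaf $\Om^p_{X'}$.
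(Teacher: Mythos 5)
Your proposal is correct and follows essentially the same route as the paper: the paper likewise combines the Cohen--Macaulayness of $\Om^p_{X',x'}$ from Lemma \ref{CM} with the criterion (cited there as \cite[5.1]{KM} and \cite[III.4.3]{AK}) that for a CM module a sequence of the right length is regular precisely when the quotient has zero-dimensional support, which is exactly your system-of-parameters argument. The only cosmetic difference is that the paper leaves the full-support/dimension bookkeeping for $\Om^p_{X',x'}$ implicit, whereas you spell it out.
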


\begin{proof}
Since we already know that $\Om_{X', x'}^p$ is CM,
it is enough to check that
$$
\dim_{x'} \Supp 
\left(\Om_{X', x'}^p / 
({f'}^*t'_1, \ldots, {f'}^*t'_m, s_{m+1}, \ldots, s_{m+n})
\Om_{X', x'}^p
\right) = 0.
$$
(cf.\ \cite[5.1 (1) iff (2)]{KM}\,\cite[III.4.3]{AK}.) \
This is clear by our choice of $s_{m+1}, \ldots, s_{m+n}$.
\end{proof}

\subsection{Non-vanishing}

Recall ${f''}^*\Dl' = \sum B''_j + B''_{exc}$,
where $\sum B''_j$ is the prime decomposition of the non-$\mu$-exceptional
divisors in ${f''}^*\Dl'$, and 
$B''_{exc}$ is the sum of $\mu$-exceptional divisors.

\begin{lem} \label{non-v}
Let $v \in H^0(X'', \Om_{X''}^{n+m-q} \ot E''))$.
Let $y' \in \Dl'$ such that $\Supp {f''}^*\Dl' \lra \Dl'$ is 
relative normal crossing around $y'$.
Assume that $v$ does not vanish at $y'$ as an element of an 
$H^0(Y', \CO_{Y'})$-module, i.e., $f_*''v$ is non-zero in 
$f''_*(\Om_{X''}^{n+m-q} \ot E'')
 /(\mathfrak m_{Y',y'} f''_*(\Om_{X''}^{n+m-q} \ot E''))$.
Then there exists a non-$\mu$-exceptional component $B''_j$ in ${f''}^*\Dl'$ 
such that $v$ does not vanish identically along 
$B''_j \cap {f''}^{-1}(y')$.
\end{lem}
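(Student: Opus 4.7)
The plan is to argue by contrapositive: assume $v|_{B''_j \cap {f''}^{-1}(y')} = 0$ for every non-$\mu$-exceptional component $B''_j$ of ${f''}^*\Dl'$, and deduce that $f''_* v$ lies in $\mathfrak m_{Y',y'} \cdot f''_*(\Om^{n+m-q}_{X''} \ot E'')$. The first move is to descend to $X'$: by Lemma~\ref{toric} and the projection formula, $\mu_*(\Om^{n+m-q}_{X''} \ot E'') = \Om^{n+m-q}_{X'} \ot E'$, so setting $v' := \mu_* v$ gives a section on $X'$ satisfying $f''_* v = f'_* v'$, and the two pushforward sheaves coincide. The task reduces to showing $f'_* v' \in \mathfrak m_{Y',y'} \cdot f'_*(\Om^{n+m-q}_{X'} \ot E')$ at $y'$.

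By Corollary~\ref{CMc}, the scheme-theoretic fiber $X'_{y'}$ is pure $n$-dimensional and reduced; let $\{W_i\}$ denote its irreducible components. Since $y' \in \Dl'$, we have $X'_{y'} \subset \bigcup_j B'_j$ set-theoretically, so each $W_i$ sits inside some component $B'_{j(i)}$, and as $\dim W_i = n$ while $\dim B'_{j(i)} = n+m-1$, $W_i$ is a top-dimensional component of $B'_{j(i)} \cap X'_{y'}$. The map $\mu$ restricts to a birational map $B''_{j(i)} \to B'_{j(i)}$ (since $B''_{j(i)}$ is non-$\mu$-exceptional), so the generic point $\eta_i$ of $W_i$ lifts outside the $\mu$-exceptional locus, where $v$ and $v'$ correspond under $\mu$. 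The hypothesis $v|_{B''_{j(i)} \cap {f''}^{-1}(y')} = 0$ therefore gives the vanishing of $v'$ at each $\eta_i$.

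To upgrade these pointwise vanishings to $v'|_{X'_{y'}} = 0$, fix $x' \in X'_{y'}$ and set $M := (\Om^{n+m-q}_{X'} \ot E')_{x'}$, which is Cohen-Macaulay of dimension $n+m$ by Lemma~\ref{CM}. By Corollary~\ref{CMc}, $({f'}^*t'_1, \ldots, {f'}^*t'_m)$ is the beginning of an $M$-regular sequence, so $\bar M := M/({f'}^*t'_1, \ldots, {f'}^*t'_m)M$ is Cohen-Macaulay of dimension $n$ and supported on $X'_{y'}$. Hence every associated prime of $\bar M$ is a minimal prime of its support, i.e.\ corresponds to one of the $\eta_i$ through $x'$, and the standard embedding $\bar M \hookrightarrow \prod_i \bar M_{\eta_i}$ forces the class of $v'_{x'}$ in $\bar M$ to vanish, i.e.\ $v'_{x'} \in ({f'}^*t'_1, \ldots, {f'}^*t'_m) M$. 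Since $X'_{y'}$ is reduced, its ideal sheaf equals ${f'}^*\mathfrak m_{Y',y'} \cdot \CO_{X'}$, so $v' \in {f'}^*\mathfrak m_{Y',y'} \cdot (\Om^{n+m-q}_{X'} \ot E')$ on a neighbourhood of ${f'}^{-1}(y')$; applying the projection formula for the proper map $f'$ yields $f'_* v' \in \mathfrak m_{Y',y'} \cdot f'_*(\Om^{n+m-q}_{X'} \ot E')$ at $y'$, contradicting the hypothesis.

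The main obstacle is the third step, reducing the pointwise vanishings at the $\eta_i$ to $v'|_{X'_{y'}} = 0$ and thence to divisibility by $\mathfrak m_{Y',y'}$. The Cohen-Macaulayness of $\Om^p_{X'}$ (Lemma~\ref{CM}) together with the regular-sequence property (Corollary~\ref{CMc}) are essential: they ensure that $\bar M$ has no embedded associated primes, so that vanishing at generic points of the $W_i$ propagates to global vanishing on the reduced fiber. Attempting the argument directly on $X''$ rather than descending to $X'$ would be hopeless, since the scheme-theoretic fiber $X''_{y'}$ is typically nonreduced (the $\mu$-exceptional components enter with multiplicities $>1$), blocking any analogous ``generic point'' control.
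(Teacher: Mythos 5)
Your proof is correct and follows the paper's argument essentially step for step: contrapositive, descent to $X'$ via $\mu_*\Om^p_{X''}=\Om^p_{X'}$ (Lemma \ref{toric}), reduction to the vanishing of the class of $\mu_*v$ modulo $({f'}^*t'_1,\ldots,{f'}^*t'_m)$, and the Cohen--Macaulayness of $\Om^p_{X'}$ as the decisive input. The only difference is cosmetic and sits in the last step: where you invoke unmixedness (a CM module has no embedded associated primes, hence injects into the product of its localizations at the minimal primes of its support, so vanishing at the generic points of the components of the reduced fiber suffices), the paper reaches the same conclusion by cutting with general functions $s_{m+1},\ldots,s_{m+n}$ and exhibiting a zero divisor inside the regular sequence provided by Corollary \ref{CMc} --- two packagings of the same piece of algebra.
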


\begin{proof}
Let us denote by $p = n+m-q$.
We have $\mu_*v \in H^0(X', (\mu_*\Om_{X''}^p) \ot E')$.
Recalling Lemma \ref{toric} that $\mu_*\Om_{X''}^p = \Om_{X'}^p$,
we then have
$f''_*v \in H^0(Y', f''_*(\Om_{X''}^p \ot E''))
= H^0(Y', f'_* (\Om_{X'}^p \ot E'))$.
Assume on the contrary that $v$ does vanish identically along 
$B''_j \cap {f''}^{-1}(y')$ for all $j$.
Then it is enough to show that 
$\mu_*v \in H^0(X', {f'}^{-1}\mathfrak m_{Y', y'} \cdot (\Om_{X'}^p \ot E'))$.
In fact it implies that $f'_*(\mu_* v)$ vanishes at $y'$,
and gives a contradiction to that $f''_*v = f'_*(\mu_*v) \in 
H^0(Y', f''_*(\Om_{X''}^p \ot E''))$ does not vanish at $y'$.
Let 
$$
\ga := (\mu_*v)|_{X'_{y'}} 
  \in H^0(X'_{y'}, \left(\Om_{X'}^p / 
	({f'}^*t'_1, \ldots, {f'}^*t'_m)\Om_{X'}^p \right) \ot E').
$$
Then, $\ga = 0$ leads to a contradiction as we want.

We would like to show that the support of $\ga$ is empty. 
Assume on the contrary that there is a point $x' \in X'_{y'}$ 
such that $d := \dim_{x'} \Supp \ga \ge 0$.
Noting that $\mu : X'' \lra X'$ is isomorphic around every point
on $\Reg X'_{y'}$, we see $\Supp \ga \subset \Sing X'_{y'}$, 
because of our assumption that $v$ vanishes identically along 
$B''_j \cap {f''}^{-1}(y')$ for all $j$.
In particular $d < n$.
We take general $s_{m+1}, \ldots, s_{m+n} \in 
\mathfrak m_{X', x'} \subset \CO_{X', x'}$ such that
$\dim_{x'} (X'_{y'} \cap \{s_{m+1} = \cdots = s_{m+k} = 0\})
= n-k$ for any $1 \le k \le n$, and 
$\dim_{x'} (\Supp \ga \cap \{s_{m+1} = \cdots = s_{m+k} = 0\})
= d-k$ for any $1 \le k \le d$.
By the CM property of $\Om_{X', x'}^p$:\ Corollary \ref{CMc},
${f'}^*t'_1, \ldots, {f'}^*t'_m, s_{m+1}, \ldots, s_{m+n}$
form an $\Om_{X', x'}^p \ot E'$-regular sequence.

Assume $d \ge 1$.
We set $\Sigma_d := X'_{y'} \cap \{s_{m+1} = \cdots = s_{m+d} = 0\}$
around $x'$ on which $s_{m+1}, \ldots, s_{m+n}$ are defined,
and consider $\ga|_{\Sigma_d} \in H^0(\Sigma_d,
(\Om_{X'}^p / ({f'}^*t'_1, \ldots, {f'}^*t'_m$, $
	s_{m+1}, \ldots$, $s_{m+d}) \Om_{X'}^p) \ot E')$.
Then $\Supp(\ga|_{\Sigma_d})$ is contained in the zero locus of 
the function $s_{m+d+1}$ around $x'$.
Since $\ga|_{\Sigma_d}$ is non-zero, (some power of $s_{m+d+1}$ and hence) 
$s_{m+d+1}$ is a zero divisor for 
$(\Om_{X', x'}^p / ({f'}^*t'_1, \ldots, {f'}^*t'_m$, 
	$s_{m+1}$, $\ldots, s_{m+d}) \Om_{X', x'}^p) \ot E'_{x'}$,
see \cite[\S 2.2]{GR} R\"uckert Nullstellensatz, cf.\ \cite[II.Ex.5.6]{Ha}.
This gives a contradiction to the fact that
${f'}^*t'_1, \ldots, {f'}^*t'_m$, $s_{m+1}$, $\ldots$, $s_{m+d+1}$
is an $\Om_{X', x'}^p \ot E'_{x'}$-regular sequence.

We also obtain a contradiction assuming $d= 0$,
by a similar manner as above without cutting out by $s_{m+1}$ and so on.
\end{proof}


\section{Hodge Metric on the Ramified Cover}  \label{4}

We still discuss in Set up \ref{local} and \S \ref{ssred}.
To compare the Hodge metric $g$ of $F = \F$ on $Y \sm \Dl$ and
a Hodge metric of $F' = \FF$ on $Y' \sm \Dl'$, 
we need to put an appropriate metric on $X''$.
We can not take arbitrary K\"ahler metric on $X''$ of course.
The problem is that the pull-back ${\tau''}^*\w$ on $X''$ is not 
positive definite any more.
To overcome this problem, we introduce a modified degenerate K\"ahler metric
and a sequence of auxiliary K\"ahler metrics.

\subsection{Degenerate K\"ahler metric}


We consider a direct sum
$$
	\tw := p_X^*\w + p_{Y'}^*\ai \sum dt'_j \wed d\ol{t'_j},
$$
which is a K\"ahler form on $X \times Y'$.
Via the map $j_{\Xo} \circ \nu \circ \mu : X'' \lra X \times Y'$, we let
$$
	\w'' := (j_{\Xo} \circ \nu \circ \mu)^* \tw = (\del^*\tw)|_{X''}
	= {\tau''}^*\w + {f''}^*\ai \sum dt'_j \wed d\ol{t'_j}
$$ 
be a $d$-closed semi-positive $(1,1)$-form on $X''$,
which we may call a degenerate K\"ahler form.
We do not take ${\tau''}^*\w$ as a degenerate K\"ahler form on $X''$,
because it may degenerate totally along ${f''}^{-1}(\Dl')$.
While it is not the case for $\w''$, as we see in the next lemma.
We will denote by $\Exc \mu \subset X''$
the exceptional locus of the map $\mu$.

\begin{lem} \label{positive}
There exists a closed analytic subset 
$V'' \subset X''$ of $\codim_{X''} V'' \ge 2$ and $f''(V'') \subset \Dl'$
such that $\w''$ is a K\"ahler form on $X'' \sm (V'' \cup \Exc \mu)$.
\end{lem}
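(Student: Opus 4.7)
The plan is to exploit the fact that $\w''$ is by construction the pullback $\iota^*\tw$ under the map $\iota := j_{\Xo} \circ \nu \circ \mu : X'' \lra X \times Y'$, and that $\tw$ is a genuine K\"ahler form on $X \times Y'$. Since $\w''$ is already $d$-closed and semi-positive, it is positive definite, hence K\"ahler, at a point $x'' \in X''$ precisely when $d\iota_{x''}$ is injective. The task therefore reduces to locating the analytic locus in $X''$ where $d\iota$ drops rank, and verifying that, after throwing away $\Exc \mu$, what remains has codimension at least two in $X''$ and is mapped into $\Dl'$ by $f''$.

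First I would invoke Lemma \ref{locemb}, which says that $X'$ is smooth and $j_{\Xo} \circ \nu : X' \lra X \times Y'$ is a local embedding on the open set $X' \sm W'$, where $W' := {\tau'}^{-1}(\Sing f^{-1}(\Dl))$. Outside $\Exc \mu$ the map $\mu$ is a local biholomorphism, so on $X'' \sm (\mu^{-1}(W') \cup \Exc\mu)$ the composition $\iota$ is a local biholomorphism followed by a local embedding, hence itself a local embedding. Consequently $\w'' = \iota^*\tw$ is strictly positive on this open set.

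It remains to verify the codimension and the fibre conditions. The simple normal crossing hypothesis in Set up \ref{local}\,(1.ii) forces $\codim_X \Sing f^{-1}(\Dl) \ge 2$, and since $\tau'$ is finite we get $\codim_{X'} W' \ge 2$. Define $V''$ to be the union of those irreducible components of $\mu^{-1}(W')$ that are not contained in $\Exc \mu$; each such component is generically mapped biholomorphically by $\mu$ onto a component of $W'$, so $\codim_{X''} V'' \ge 2$. By construction $\mu^{-1}(W') \cup \Exc\mu = V'' \cup \Exc\mu$, so $\w''$ is K\"ahler on $X'' \sm (V'' \cup \Exc\mu)$. Using $f'' = f' \circ \mu$ and the commutativity $f \circ \tau' = \tau \circ f'$, one checks $f''(V'') \subset f'(W') \subset \tau^{-1}(\Dl) = \Dl'$. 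The only potentially delicate point is the codimension bookkeeping through the birational $\mu$, but this is painless here because we are entitled to discard $\Exc \mu$ at the outset; aside from that, the lemma is a direct consequence of Lemma \ref{locemb} and the SNC assumption on $f^*\Dl$.
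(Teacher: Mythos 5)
Your argument is correct and follows essentially the same route as the paper: both proofs reduce to Lemma \ref{locemb}, take the bad locus to be $V'={\tau'}^{-1}(\Sing f^{-1}(\Dl))$ (of codimension $\ge 2$, mapping into $\Dl'$, and containing $\Sing X'$), and then pass to $X''$ by discarding $\Exc\mu$ via the identity $\mu^{-1}(V')\cup\Exc\mu=V''\cup\Exc\mu$. The only cosmetic difference is that the paper verifies positivity of $(j_\Xo\circ\nu)^*\tw$ on $X'\sm V'$ by the explicit local computation on the sheets $U^\circ_p$, whereas you invoke the equivalent abstract fact that the pullback of a K\"ahler form under a map with injective differential is again K\"ahler; your codimension bookkeeping is also slightly more explicit than the paper's but matches it.
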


\begin{proof}
We look at $V' = {\tau'}^{-1}(\Sing f^{-1}(\Dl))$ first, 
which is a closed analytic subset of $X'$ of $\codim_{X'} V' \ge 2$ 
with $f'(V') \subset \Dl'$ and $V' \supset \Sing X'$ by Lemma \ref{locemb}.
We can see that $(j_\Xo \circ (\nu|_{X' \sm V'}))^*\tw$ is
positive definite (i.e., a K\"ahler form) on $X' \sm V'$ as follows.
We continue the argument in the proof of Lemma \ref{locemb},
and use the notations there.
On each $U^\circ_p$ in $U' = \amalg_{p=1}^{b_j} U^\circ_p \subset X'$,
the $(1,1)$-from $(j_\Xo \circ \nu|_{U'})^* \tw$ is $\tw|_{U^\circ_p}$, 
and needless to say it is K\"ahler.
Then our assertion follows from this observation, because
we can write $\mu^{-1}(V') \cup \Exc \mu = V'' \cup \Exc \mu$
for some $V'' \subset X''$ as in the statement.
\end{proof}

The replacement of ${\tau''}^*\w$ by $\w''$ may cause troubles
when we compair Hodge metrics on $Y \sm \Dl$ and $Y' \sm \Dl'$.
However it is not the case by the following isometric lemma.

\begin{lem} \label{isometry}
Let $t \in Y \sm \Dl$ and take one $t' \in Y' \sm \Dl'$ such that 
$\tau(t') = t$, and let $\vph_{t'} : F'_{t'} \lra (\tau^*F)_{t'} = F_t$
be the isomorphism of fibers in Lemma \ref{inj}.
The fiber $F_t$ (resp.\ $F'_{t'}$) has a Hermitian inner product:\
the Hodge metric $g = g_\w$ with respect to $\w$ and $h$ 
(resp.\ $g' = g_{\w''}$ with respect to $\w''$ and $h''$).
Then $\vph_{t'}$ is an isometry with respect to these inner products.
\end{lem}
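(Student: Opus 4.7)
\textbf{Plan of proof of Lemma \ref{isometry}.}
The plan is to reduce the equality of Hodge metrics to a change-of-variables identity for integrals over a single smooth fiber, by showing that over $Y \sm \Dl$ the map $\tau''$ restricts to a biholomorphism of fibers compatible with every piece of structure used in Definition \ref{hodge}.

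First I would establish: since $\tau : Y' \lra Y$ is étale over $Y \sm \Dl$, the base change $\tauo : \Xo \lra X$ is étale over $X \sm f^{-1}(\Dl)$, so in a neighborhood of $\fo{}^{-1}(t')$ the variety $\Xo$ is smooth, the normalization $\nu$ is the identity, and $\mu$ is biholomorphic on the smooth locus. Hence $\tau''$ restricts to a biholomorphism $\rho := \tau''|_{X''_{t'}} : X''_{t'} \isom X_t$ with $(E'', h'')|_{X''_{t'}} = \rho^*\bigl((E,h)|_{X_t}\bigr)$. Next, from the definition of $\w''$ the term ${f''}^*\ai\sum dt'_j \wed d\ol{t'_j}$ vanishes on the fiber, so $\w''|_{X''_{t'}} = \rho^*(\w|_{X_t})$. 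Moreover, $\theta_{Y'} := dt'$ and $\theta_Y := dt$ are related by $\tau^*\theta_Y = \ell\, {t'_m}^{\ell-1}\theta_{Y'}$, a nowhere vanishing holomorphic multiple on $Y' \sm \Dl'$.

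Next, I would identify $\vph_{t'}$ concretely. Over $Y' \sm \Dl'$ the construction in the proof of Lemma \ref{inj} reduces to the standard smooth base change isomorphism, so $\vph_{t'}$ coincides with the isomorphism $H^q(X''_{t'}, K_{X''_{t'}} \ot E''_{t'}) \isom H^q(X_t, K_{X_t} \ot E_t)$ induced by pullback along $\rho^{-1}$ (equivalently, pushforward along $\rho$). For $u_{t'} \in F'_{t'}$ set $\tilde u_{t'} := S_{\w''}(u_{t'})$ and $\tilde u_t := S_\w(\vph_{t'}(u_{t'}))$. The operator $S_\w$ is characterized (see Step 2 of the proof of Proposition 2.5 referenced in the paper) by producing the unique relatively holomorphic $(n-q,0)$-form with $E$-values representing the class, together with compatibility conditions involving $\w$ and $h$; by uniqueness applied to the biholomorphism $\rho$ with matched K\"ahler and Hermitian data, one obtains $\tilde u_{t'} = \rho^*\tilde u_t$ (up to the holomorphic multiplicative factor coming from $\theta_{Y'}$ versus $\tau^*\theta_Y$, which cancels when paired with its conjugate in the integrand).

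Finally, the change-of-variables formula for the biholomorphism $\rho$ yields
\begin{equation*}
g'(u_{t'}, v_{t'})
  = \int_{X''_{t'}} \frac{c_{n-q}}{q!}\,(\w''|_{X''_{t'}})^q \wed \tilde u_{t'} \wed h''|_{X''_{t'}}\ol{\tilde v_{t'}}
  = \int_{X_t} \frac{c_{n-q}}{q!}\,\w_y^q \wed \tilde u_t \wed h_y\ol{\tilde v_t}
  = g(\vph_{t'}(u_{t'}), \vph_{t'}(v_{t'})),
\end{equation*}
proving the asserted isometry. The main obstacle is the middle step: checking that the operator $S_\w$ really is natural under the biholomorphism $\rho$, i.e.\ that the relatively holomorphic $(n-q,0)$-representative on $X''_{t'}$ selected by $S_{\w''}$ corresponds under $\rho$ to the representative on $X_t$ selected by $S_\w$. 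This reduces to the uniqueness statement characterizing $S_\w$, together with keeping careful track of the trivializations $\theta_Y$ on $Y$ and $\theta_{Y'}$ on $Y'$ and the factor $\tau^*\theta_Y/\theta_{Y'}$ that relates them.
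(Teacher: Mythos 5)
Your proposal follows essentially the same route as the paper's proof: over $Y \sm \Dl$ the map $\tau''$ identifies the fiber spaces locally, the extra term ${f''}^*\ai\sum dt'_j \wed d\ol{t'_j}$ in $\w''$ vanishes on fibers, and the Hodge metric only depends on the fiberwise restriction of the K\"ahler form. The paper packages this slightly more economically --- it takes a small $W' \isom W$ over which $f'' : X''_{W'} \lra W'$ and $f : X_W \lra W$ are isomorphic as fiber spaces, observes that $\vph_{t'}$ is then tautologically an isometry for $g_{{\tau''}^*\w}$, and invokes \cite[5.2]{MT2} to conclude $g_{\w''} = g_{{\tau''}^*\w}$ --- but the content is the same, and your identification of $\vph_{t'}$ with the base-change isomorphism over the \'etale locus of $\tau$ is correct.

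One step is misstated, although the conclusion survives. A nowhere vanishing holomorphic factor $c$ coming from $\tau^*(dt) = \ell\,{t'_m}^{\ell-1}\,dt'$ would \emph{not} ``cancel when paired with its conjugate in the integrand'': if one had $\tilde u_{t'} = c\,\rho^*\tilde u_t$ with $|c| \ne 1$, the integrand would acquire $|c|^2$ and the two metrics would genuinely differ. The correct resolution is that no such factor arises in the first place. The map $\vph$ of Lemma \ref{inj} is built from canonical identifications of (relative) dualizing sheaves, so no frame of $K_Y$ or $K_{Y'}$ enters its definition; and the assignment $u \mapsto [\sg_u]$ (equivalently $S_\w$) is independent of the choice of frame of the canonical bundle of the base, as noted in Step 2 of the proof of Proposition \ref{Takegoshi} --- the frame enters once in the identification $\Rq \cong R^qf_*(K_X \ot E)$ and once inversely when one divides the harmonic representative by $f^*\theta_Y$. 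With that correction, your change-of-variables computation goes through and yields the asserted isometry.
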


\begin{proof}
We take a small coordinate neighbourhood $W$ (resp.\ $W'$) around $t$ 
(resp.\ $t'$) such that $\tau|_{W'} : W' \lra W$ is isomorphic, and that
$f'' : X''_{W'} \lra W'$ and $f : X_W \lra W$ are isomorphic
as fiber spaces over the identification $\tau|_{W'} : W' \isom W$.
The Hermitian vector bundle $(E'', h'')$ is ${\tau''}^*(E, h)$
by definition.
If we put a Hermitian inner product $g_{{\tau''}^*\w}$ on $F_{t'}$ 
with respect to ${\tau''}^*\w$ and $h''$, 
the map $\vph_{t'} : (F'_{t'}, g_{{\tau''}^*\w}) \lra (F_t, g_\w)$
is an isometry.
Although $\w'' \ne {\tau''}^*\w$, 
$\w''$ and ${\tau''}^*\w$ are the same as relative K\"ahler forms over $W'$, 
more concretely $\w'' = {\tau''}^*\w + {f''}^*\ai \sum dt_j \wed d\ol{t'_j}$.
Then we have $g_{\w''} = g_{{\tau''}^*\w}$,
by a part of the definition of Hodge metrics \cite[5.2]{MT2}.
\end{proof}

\begin{dfn}
Let $g'$ be the Hodge metric on $\FF|_{Y' \sm \Dl'}$ with respect to
$\w''$ and $h''$.
\end{dfn}


\subsection{Hodge metric with respect to 
 		the degenerate K\"ahler metric}

We would like to develope Takegoshi's theory of ``relative harmonic forms''
with respect to the degenerate K\"ahler form $\w''$ on $X''$.
The goal is the following

\begin{prop} \label{Takegoshi}
(cf.\ \cite[5.2]{Tk}.)\  
There exist $H^0(Y', \CO_{Y'})$-module homomorphisms
\begin{equation*} 
\begin{aligned}
	*\CH &: H^0(Y', \FF) 
		\lra H^0(X'', \Om_{X''}^{n+m-q} \ot E''), \\
	L^q &: H^0(X'', \Om_{X''}^{n+m-q} \ot E'') 
		\lra H^0(Y', \FF) 
\end{aligned}
\end{equation*} 
such that
(1) $(c_{n+m-q}/q!) L^q \circ *\CH = id$, and
(2)
for every $u \in H^0(Y', \FF)$, there exists a relative holomorphic
form $[\sg_u] \in 
H^0(X'' \sm {f''}^{-1}(\Dl'), \Om_{X''/Y'}^{n-q} \ot E'')$ such that
$$
	(* \CH (u))|_{X'' \sm {f''}^{-1}(\Dl')} = \sg_u \wed {f''}^*dt'.
$$ 
\end{prop}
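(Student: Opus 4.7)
The proposition is Takegoshi's harmonic representation theorem \cite[5.2]{Tk} adapted to the degenerate K\"ahler form $\w''$, which is genuinely K\"ahler only off $V'' \cup \Exc\mu$ by Lemma \ref{positive}. Over the smooth locus $Y' \sm \Dl'$, $f''$ is a smooth K\"ahler fiber space with $\w''$ as a relative K\"ahler form, so Takegoshi's theorem applies directly: for $u \in H^0(Y', F')$ one obtains a relative holomorphic representative $\sg_u := S_{\w''}(u) \in H^0(X'' \sm {f''}^{-1}(\Dl'), \Om_{X''/Y'}^{n-q} \ot E'')$ as in Definition \ref{hodge}, and sets $(*\CH(u))|_{X'' \sm {f''}^{-1}(\Dl')} := \sg_u \wed {f''}^* dt'$, which automatically produces the factorization required in (2). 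The real tasks are to extend $*\CH(u)$ to a global section of $\Om_{X''}^{n+m-q} \ot E''$ on all of $X''$, and to construct $L^q$ globally on $Y'$.

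To extend across ${f''}^{-1}(\Dl')$, I would approximate $\w''$ by the honest K\"ahler forms $\w''_\ep := \w'' + \ep \w_{X''}$ for an auxiliary K\"ahler form $\w_{X''}$ on $X''$ (available since $X''$ is a successive blow-up of $X \times Y'$ along smooth centers, hence locally K\"ahler after shrinking $Y'$). For each $\ep > 0$, $(f'', \w''_\ep)$ satisfies Set up \ref{basic}, so \cite[5.2]{Tk} provides maps $*\CH_\ep$ and $L^q_\ep$ globally on $X''$ and $Y'$ respectively, satisfying $(c_{n+m-q}/q!)\, L^q_\ep \circ *\CH_\ep = \mathrm{id}$. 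The Nakano semi-positivity of $(E'', h'')$ yields $L^2$-bounds on $*\CH_\ep(u)$ uniform in $\ep$, via the vanishing and $\rdb$-estimates of \cite[\S\S 3--5]{Tk}; elliptic regularity on the smooth locus together with Montel compactness then produce a limit $*\CH(u) \in H^0(X'', \Om_{X''}^{n+m-q} \ot E'')$ as $\ep \ra 0$. On $X'' \sm {f''}^{-1}(\Dl')$ the limit coincides with the fiberwise harmonic construction, giving the factorization of~(2).

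For $L^q$, I would either pass to the limit of $L^q_\ep$ using the same uniform estimates, or construct $L^q$ directly by writing $v = \sg_v \wed {f''}^* dt'$ over $X'' \sm {f''}^{-1}(\Dl')$ and letting $L^q(v)_{y'} \in F'_{y'} = H^q(X''_{y'}, K_{X''_{y'}} \ot E''_{y'})$ be the Dolbeault class of $\w''^q \wed \sg_v / q!$, extending across $\Dl'$ via torsion-freeness of $F'$ (Lemma \ref{torsion free}) combined with an $L^2$ bound. The identity $(c_{n+m-q}/q!)\, L^q \circ *\CH = \mathrm{id}$ holds pointwise over $Y' \sm \Dl'$ by the standard K\"ahler identity for primitive harmonic forms, and propagates to $Y'$ by torsion-freeness. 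The principal obstacle is to guarantee that $\lim_{\ep \ra 0} *\CH_\ep(u)$ is holomorphic on all of $X''$, not merely an $L^2$-section: this requires uniform $\rdb$-estimates adapted from \cite[\S\S 3--5]{Tk}, exploiting that $\w''$ is a genuine K\"ahler form on $X'' \sm (V'' \cup \Exc\mu)$, that $V''$ has codimension $\geq 2$, and that $\Exc\mu$ lies over $\Dl'$, so a Hartogs-type extension across codimension $\geq 2$ subsets combined with $L^2$ removable-singularity arguments closes the gap.
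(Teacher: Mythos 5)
Your overall strategy --- approximate the degenerate form $\w''$ by honest K\"ahler forms, apply Takegoshi's theorem \cite[5.2]{Tk} to each, and pass to a limit --- is in the right spirit, but the specific approximation you choose creates an analytic problem that you acknowledge (``the principal obstacle'') and do not close, and this is exactly the point where the paper does something different and decisive. You perturb globally, $\w''_\ep = \w'' + \ep\,\w_{X''}$, so that $*\CH_\ep(u)$ genuinely changes everywhere with $\ep$ and you must extract an honest limit. The asserted uniform $L^2$-bounds are not established, and they are delicate: the volume form of $\w''_\ep$ collapses as $\ep \ra 0$ precisely along the locus where $\w''$ degenerates, so the pointwise norm of a top-degree-type form blows up there while the volume shrinks, and neither Nakano semi-positivity nor the estimates of \cite{Tk} hand you an $\ep$-uniform bound for free. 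Montel compactness, elliptic regularity on the smooth locus, and Hartogs/removable-singularity arguments are then all needed to repair a difficulty that a better choice of approximation avoids entirely. You would also still need to check that the limit agrees with the fiberwise harmonic construction over $Y' \sm \Dl'$, since $\w''_\ep$ differs from $\w''$ on every fiber.

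The paper's proof instead perturbs only over a shrinking neighbourhood of $\Dl'$: it takes $\w''_k = (\del^*\tw + c_k^{-1}\xi_k)|_{X''}$ with $\Supp \xi_k \subset (p_{Y'}\circ\del)^{-1}(W'_k)$, where $W'_k$ is a fundamental system of neighbourhoods of $\Dl'$; this is possible because the blow-up centers of $\del$ lie over $\Sing \Xo$, hence over $\Dl'$. Consequently $\w''_k = \w''$ on the whole of $X'' \sm {f''}^{-1}(W'_k)$, and by the restriction compatibility \cite[5.2.iv]{Tk} the holomorphic forms $*_{k_1}\circ\CH_{k_1}(u)$ and $*_{k_2}\circ\CH_{k_2}(u)$ literally coincide on the open set $X''_{W'}$ for any Stein $W' \subset Y' \sm (W'_{k_1}\cup W'_{k_2})$; being holomorphic sections of $\Om_{X''}^{n+m-q}\ot E''$ they then coincide on all of $X''$ by the identity theorem. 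No limit, no uniform estimate, no removable-singularity argument is needed: $*\CH(u)$ is simply the common value, $L^q := (\ga^q)^{-1}\circ L^q_k$ for one fixed $k$, and (1) and (2) are inherited verbatim from \cite[5.2.i--ii]{Tk}. I would encourage you to redo your argument with this localized perturbation; as written, your proof has a genuine gap at the passage to the limit.
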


\begin{proof}
{\it Step 1:\ a sequence of K\"ahler forms}.
We take $\{W'_k;\ k = 1, 2, \ldots\}$ a fundamental system of 
neighbourhoods of $\Dl'$ in $Y'$,
such as $W'_k = \{ t' \in Y';\ |t_m'| < 1/(k+1) \}$.
Let $k$ be a positive integer.
Since $\del : \wtil{X \times Y'} \lra X \times Y'$ in \S \ref{ssred} 
is a composition of blowing-ups along smooth centers laying over $\Sing \Xo$,
there exists a $d$-closed real $(1,1)$-form $\xi_k$ on $\wtil{X \times Y'}$
with $\Supp \xi_k \subset (p_{Y'} \circ \del)^{-1}(W'_k)$ such that 
$c_k \del^*\wtil \w + \xi_k > 0$ on $\wtil{X \times Y'}$
for a large constant $c_k$
(possibly after shrinking $Y$ and $Y'$). 
Possibly after replacing $c_k$ by a larger constant, we may assume
$\| \xi_k \|_\infty / c_k \to 0$ as $k \to \infty$.
Here $\| \xi_k \|_\infty$ is the sup-norm with respect to any fixed 
Hermitian metric on $\wtil{X \times Y'}$ 
(possibly after shrinking $Y$ and $Y'$). 
Thus we obtain a sequence of K\"ahler forms
$$
	\{ \wtil \w_k := \del^*\wtil \w + c_k^{-1}\xi_k \}_k
$$ 
on $\wtil{X \times Y'}$ such that $\wtil \w_k = \del^*\wtil \w$
on $\wtil{X \times Y'} \sm (p_{Y'} \circ \del)^{-1}W'_k$,
and $\wtil \w_k \to \del^*\wtil \w$ uniformly on $\wtil{X \times Y'}$ 
as $k \to \infty$.
For every positive integer $k$, we let 
$$
	\w''_k := \tw_k|_{X''}
$$
be a K\"ahler form on $X''$.

{\it Step 2:\ Relative hard Lefschetz type theorem}.
We first recall Takegoshi's theory 
with respect to the K\"ahler forms $\w''_k$ on $X''$. 
Let $W' \subset Y'$ be a Stein subdomain with a strictly plurisubhamonic
exhaustion function $\psi$.
We take a global frame $dt' = dt'_1 \wed \ldots \wed dt'_m$ of $K_{Y'}$.
Recalling $\FF 
= K_{Y'}^{\ot (-1)} \ot R^qf''_*(K_{X''} \ot E'')$,
this trivialization of $K_{Y'}$ gives an isomorphism
$\FF \cong R^qf''_*(K_{X''} \ot E'')$ on $Y'$.
Since $W'$ is Stein, we have also a natural isomorphism
$H^0(W', R^qf''_*(K_{X''} \ot E'')) \cong H^q(X''_{W'}, K_{X''} \ot E'')$,
where $X'' = {f''}^{-1}(W')$.
We denote by $\ga^q$ the composed isomorphism
$$
 \ga^q : H^0(W', \FF) 
	\isom H^q(X''_{W'}, K_{X''} \ot E'').
$$

Let $k$ be a positive integer.
With respect to the K\"ahler form $\w''_k$ on $X''$ in Step 1, 
we denote by $*_k$ the Hodge $*$-operator, and by 
$$
	L_k^q : H^0(X''_{W'}, \Om_{X''}^{n+m-q} \ot E'') 
			\lra H^q(X''_{W'}, K_{X''} \ot E'')
$$ 
the Lefschetz homomorphism induced from ${\w''_k}^q \wed \bullet$. 
Also with respect to $\w''_k$ and $h''$, we set
$\CH^{n+m,q}(X''_{W'}, \w''_k, E'', {f''}^*\psi) 
= \{u \in A^{n+m, q}(X''_{W'}, E'') ;\ \rdb u = \vth_{h''} u = 0, \
	e(\rdb ({f''}^*\psi))^* u = 0 \}$
(see \cite[4.3 or 5.2.i]{Tk}).
By \cite[5.2.i]{Tk}, $\CH^{n+m,q}(X''_{W'}, \w''_k, E'', {f''}^*\psi)$ 
represents $H^q(X''_{W'}, K_{X''} \ot E'')$ as an 
$H^0(Y', \CO_{Y'})$-module, and there exists a natural isomorphism
$$
\iota_k : \CH^{n+m,q}(X''_{W'}, \w''_k, E'', {f''}^*\psi) 
		\isom H^q(X''_{W'}, K_{X''} \ot E'')
$$
given by taking the Dolbeault cohomology class.
We have an isomorphism
$$
	\CH_k = \iota_k^{-1} \circ \ga^q :  
  H^0(W', \FF) 
  \isom \CH^{n+m,q}(X''_{W'}, \w''_k, E'', {f''}^*\psi).
$$
Also by \cite[5.2.i]{Tk},
the Hodge $*$-operator gives an injective homomorphism 
$$
   *_k : \CH^{n+m,q}(X''_{W'}, \w''_k, E'', {f''}^*\psi)
	\lra H^0(X''_{W'}, \Om_{X''}^{n+m-q} \ot E''),
$$
and induces a splitting 
$*_k \circ \iota_k^{-1} : H^q(X''_{W'}, K_{X''} \ot E'')
	\lra H^0(X''_{W'}, \Om_{X''}^{n+m-q} \ot E'')$
for the Lefschetz homomorphism $L_k^q$ such that
$(c_{n+m-q}/q!) L_k^q \circ *_k \circ \iota_k^{-1} = id$.
(The homomorphism $\delta^q$ in \cite[5.2.i]{Tk} with respect to 
$\w''_k$ and $h''$ is $*_k \circ \iota_k^{-1}$ times a universal constant.) \
In particular 
$$
	(c_{n+m-q}/q!) ((\ga^q)^{-1} \circ L_k^q) \circ (*_k \circ \CH_k) 
	= id.
$$
All homomorphisms $\ga^q, *_k, L_k^q, \iota_k, \CH_k$ are as 
$H^0(W', \CO_{Y'})$-modules.

Let $u \in H^0(W', \FF)$. 
Then we have $*_k \circ \CH_k(u) \in H^0(X''_{W'}, \Om_{X''}^{n+m-q} \ot E'')$,
and then by \cite[5.2.ii]{Tk}
$$
	*_k \circ \CH_k(u)|_{X''_{W'} \sm {f''}^{-1}(\Dl')} 
	= \sg_k \wed {f''}^*dt'
$$ 
for some 
$[\sg_k] \in H^0(X''_{W'} \sm {f''}^{-1}(\Dl'), \Om_{X''/Y'}^{n-q} \ot E'')$.
It is not difficult to see 
$[\sg_k] \in H^0(X''_{W'} \sm {f''}^{-1}(\Dl'), \Om_{X''/Y'}^{n-q} \ot E'')$
does not depend on the particular choice of a frame $dt'$ of $K_{Y'}$.

{\it Step 3:\ Takegoshi's theory with respect to $\w''$}.
We then consider the theory for $\w''$.
In case a Stein subdomain $W' \subset Y'$ as above
is contained in $Y' \sm \Dl'$, the theory is the same because 
$\w''$ is K\"ahler on $X'' \sm {f''}^{-1}(\Dl')$ (see Lemma \ref{positive}).
Hence we explain, how to avoid the degeneracy of $\w''$ 
along a part of ${f''}^{-1}(\Dl')$.

Let $k_1$ and $k_2$ be any pair of positive integers.
We take any Stein subdomain $W' \subset Y' \sm (W'_{k_1} \cup W'_{k_2})$,
which admits a smooth strictly plurisubharmonic exhaustion function $\psi$.
Due to \cite[5.2.iv]{Tk}, there are two commutative diagrams for
$i = 1, 2$:\
\begin{equation*} 
\begin{CD}
   H^q(X'', K_{X''} \ot E'')  @>{*_{k_i} \circ \iota_{k_i}^{-1}}>>  
				H^0(X'', \Om_{X''}^{n+m-q} \ot E'')  \\
    @VVV            @VVV   \\
   H^q(X''_{W'}, K_{X''} \ot E'')  @>>{*_{k_i} \circ \iota_{k_i}^{-1}}>  
				H^0(X''_{W'}, \Om_{X''}^{n+m-q} \ot E''). 
\end{CD}
\end{equation*} 
Here the vertical arrows are restriction maps.
The bottom horizontal maps depend only on $\w''_{k_i}|_{X''_{W'}}$.
Recall that $\w''_k = \w''$ on $X'' \sm {f''}^{-1}(W'_k)$.
Because of $\w''= \w''_{k_1} = \w''_{k_2}$ on $X''_{W'}$,
the bottom horizontal maps are independent of $k_1$ and $k_2$.

Let us take $u \in H^0(Y', \FF)$. 
Then by the observation above, two holomorphic forms 
$*_{k_1} \circ \CH_{k_1}(u), *_{k_2} \circ \CH_{k_2}(u) \in 
H^0(X'', \Om_{X''}^{n+m-q} \ot E'')$
coincide on an open subset $X''_{W'}$, and hence
$*_{k_1} \circ \CH_{k_1}(u) = *_{k_2} \circ \CH_{k_2}(u)$ on $X''$.
(Note that it may happen that 
$\CH_{k_1}(u) \ne \CH_{k_2}(u)$ around ${f''}^{-1}(\Dl')$,
because $\CH_k(u) = (c_{n+m-q}/q!) \w''_k \wed (*_k \circ \CH_{k}(u))$ and
$\w''_{k_1} \ne \w''_{k_2}$ around there.) \ 
We denote by 
$$
	* \CH (u) \in H^0(X'', \Om_{X''}^{n+m-q} \ot E'')
$$
instead of arbitrary $*_k \circ \CH_k(u)$. 
Since $\w''$ may not be positive definite along a part of 
${f''}^{-1}(\Dl')$, the operators $*$ and $\CH$ with respect to 
$\w''$ may not be defined across ${f''}^{-1}(\Dl')$.
However 
$$
	*\CH : H^0(Y', \FF) 
		\lra H^0(X'', \Om_{X''}^{n+m-q} \ot E'')
$$
is defined.
Recalling $\CH_k(u) = (c_{n+m-q}/q!) \w''_k \wed (*_k \circ \CH_{k}(u))$
in $H^q(X'', K_{X''} \ot E'')$, since $\CH_{k_1}(u)$ and $\CH_{k_2}(u)$ 
are in the same Dolbeault cohomology class 
$\ga^q(u) \in H^q(X'', K_{X''} \ot E'')$, we have
$L_{k_1}^q (*_{k_1} \circ \CH_{k_1}(u)) 
= L_{k_2}^q (*_{k_2} \circ \CH_{k_2}(u))$.
We put 
$$
   L^q = (\ga^q)^{-1} \circ L_k^q : 
   H^0(X'', \Om_{X''}^{n+m-q} \ot E'') \lra H^0(Y', \FF)
$$ 
for one arbitrary fixed large $k$.
A different choice of $k$ will give a different $L^q$, 
however the relation
$(c_{n+m-q}/q!) ((\ga^q)^{-1} \circ L_k^q) \circ (*_k \circ \CH_k) = id$
in Step 2 implies our assertion (1).
Recall $(*_k \circ \CH_k(u))|_{X'' \sm {f''}^{-1}(\Dl')} 
= \sg_k \wed {f''}^*dt'$ for some 
$[\sg_k] \in H^0(X'' \sm {f''}^{-1}(\Dl'), \Om_{X''/Y'}^{n-q} \ot E'')$.
Then we see, $[\sg_k]$ is also independent of $k$, and hence
$(* \CH (u))|_{X'' \sm {f''}^{-1}(\Dl')}$ can be written as
$$
	(* \CH (u))|_{X'' \sm {f''}^{-1}(\Dl')} = \sg_u \wed {f''}^*dt'
$$ 
for some $[\sg_u] \in 
H^0(X'' \sm {f''}^{-1}(\Dl'), \Om_{X''/Y'}^{n-q} \ot E'')$.
This is (2).
\end{proof}

\begin{rem} 
(1)
We recall the definition of the Hodge metric $g'$ of 
$\FF|_{Y' \sm \Dl'}$ 
with respect to $\w''$ and $h''$.
We remind that $\w''$ is K\"ahler on $X'' \sm {f''}^{-1}(\Dl')$.
We only mention it for a global section 
$u \in H^0(Y', \FF)$. 
It is given by
$$
 g'(u,u)(t') 
	= \int_{X''_{t'}} (c_{n-q}/q!) 
	  ({\w''}^q \wed \sg_u \wed h'' \ol{\sg_u})|_{X''_{t'}}
$$
at $t' \in Y' \sm \Dl'$.

(2)
This is only a side remark, which we will not use later.
The Hodge metric $g'_k$ of $\FF|_{Y' \sm \Dl'}$ 
with respect to $\w''_k$ and $h''$ is given, 
for $u \in H^0(Y', \FF)$, by 
$$
 g'_k(u,u)(t') 
	= \int_{X''_{t'}} (c_{n-q}/q!) 
	  ({\w''_k}^q \wed \sg_{u_k} \wed h'' \ol{\sg_{u_k}})|_{X''_{t'}}
	= \int_{X''_{t'}} (c_{n-q}/q!) 
	  ({\w''_k}^q \wed \sg_u \wed h'' \ol{\sg_u})|_{X''_{t'}}
$$
at $t' \in Y' \sm \Dl'$.
Since $\w''_k \to \w''$ uniformly as $k \to \infty$,
we have $g'_k(u,u)(t') \to g'(u,u)(t')$ as $k \to \infty$,
for any fixed $t' \in Y' \sm \Dl'$.
\qed
\end{rem}

\subsection{Uniform estimate of Fujita type}  \label{Fujita method}

We will give a key estimate of the singularities of
the Hodge metric $g'$ on $\FF|_{Y' \sm \Dl'}$ with respect to 
$\w''$ and $h''$.
This is the main place where we use the fact that, by weakly semi-stable
reduction, we achieve ${f''}^*\Dl'$ is reduced plus $\mu$-exceptional.

In this subsection we pose the following genericity condition
around a point of $\Dl'$.

\begin{assum} \label{a1}
The map $f' : X'' \lra Y'$, $(E'', h'')$ and $F' = \FF$ satisfy 
the conditions (1)--(2) in Set up \ref{local}.
\end{assum}

We then take a global frame $e'_1, \ldots, e'_r \in H^0(Y', F')$
of $F' \cong \CO_{Y'}^{\oplus r}$.
For a constant vector $s = (s_1, \ldots, s_r) \in \BC^r$, we let
$u_s = \sum_{i=1}^r s_ie'_i \in H^0(Y', F')$.
We denote by 
$S^{2r-1} = \{ s \in \BC^r ; \ |s| = (\sum |s_i|^2)^{1/2} = 1 \}$
the unit sphere.

We note the following two things.
Since $e'_1, \ldots, e'_r$ generate $F'$ over $Y'$,
$u_s$ is nowhere vanishing on $Y'$ as soon as $s \ne 0$,
namely $u_s$ is non-zero in $F'/(\mathfrak m_{Y', y'} F')$
at any $y' \in Y'$.
The map $\BC^r \lra H^0(X'', \Om_{X''}^{n+m-q} \ot E'')$
given by $s \mapsto u_s \mapsto *\CH(u_s) = \sum_{i=1}^r s_i (*\CH(e'_i))$ 
is continuous, with respect to the standard topology of $\BC^r$ and 
the topology of $H^0(X'', \Om_{X''}^{n+m-q} \ot E'')$ of 
uniform convergence on compact sets.

\begin{lem}  \label{Ft11}
(cf.\ \cite[1.11]{Ft}.) \ 
Under Assumption \ref{a1} and notations above, 
let $y' \in \Dl'$ and let $s_0 \in S^{2r-1}$. 
Then there exist a neighbourhood $S(s_0)$ of $s_0$ in $S^{2r-1}$,
a neighbourhood $W'_{y'}$ of $y'$ in $Y'$ and a positive number $N$ such that
$g'(u_s, u_s)(t') \ge N$ for any $s \in S(s_0)$ and any 
$t' \in W'_{y'} \sm \Dl'$.
\end{lem}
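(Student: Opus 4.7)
The plan is to reduce the problem to a uniform lower bound for the Hodge integrand on a coordinate patch around a carefully chosen smooth point of a non-$\mu$-exceptional component of ${f''}^*\Dl'$, exploiting the coefficient-one property that weakly semi-stable reduction provides. Since $F' \cong \CO_{Y'}^{\oplus r}$ is globally trivialized (Assumption \ref{a1}), $u_{s_0}$ is nowhere zero on $Y'$ for any $s_0 \in S^{2r-1}$, and in particular is non-zero in $F'/(\fm_{Y', y'}F')$. The identity $(c_{n+m-q}/q!)L^q \circ *\CH = \mathrm{id}$ of Proposition \ref{Takegoshi} then forces $v_0 := *\CH(u_{s_0})$ to be non-zero in $f''_*(\Om_{X''}^{n+m-q} \ot E'')/(\fm_{Y', y'} f''_*(\Om_{X''}^{n+m-q} \ot E''))$. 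Lemma \ref{non-v} therefore produces a non-$\mu$-exceptional component $B''_j$ of ${f''}^*\Dl'$ along which $v_0$ does not vanish identically.

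I next pick $x_0 \in B''_j \cap {f''}^{-1}(y')$ with three properties: $x_0$ lies on no other component of $\Supp {f''}^*\Dl'$; $x_0 \notin V'' \cup \Exc \mu$, so that $\w''$ is K\"ahler on a neighbourhood of $x_0$ by Lemma \ref{positive}; and $v_0$ does not vanish at $x_0$ as a germ of a section of $\Om_{X''}^{n+m-q} \ot E''$. Such an $x_0$ exists because each excluded locus is a proper closed analytic subset of $B''_j \cap {f''}^{-1}(y')$. Because $B''_j$ has coefficient one in ${f''}^*\Dl'$ (Lemma \ref{KM7.23}) and $x_0$ lies on no other component, $f''$ is a submersion at $x_0$, so one obtains coordinates $(x_1, \ldots, x_n, y_1, \ldots, y_m)$ on a neighbourhood $U \ni x_0$ with $f''(x, y) = y$ and $B''_j = \{y_m = 0\}$. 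Then $f''^*dt' = dy_1 \wed \cdots \wed dy_m$ is nowhere vanishing on $U$, and Proposition \ref{Takegoshi} (2) gives
\[
  *\CH(u_s)|_U = \sum_I g_{I, s}(x, y)\, dx_I \wed dy_1 \wed \cdots \wed dy_m, \qquad \sg_{u_s}|_U = \sum_I g_{I, s}(x, y)\, dx_I,
\]
with holomorphic $E''$-valued coefficients $g_{I, s}$. By construction some $|g_{I_0, s_0}(x_0, y')|_{h''} > 0$.

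Since $s \mapsto *\CH(u_s) = \sum_i s_i *\CH(e'_i)$ is $\BC$-linear, the coefficients $g_{I, s}$ depend continuously on $s$. Hence there exist a neighbourhood $S(s_0) \subset S^{2r-1}$, a smaller polydisc $U_0 \ni x_0$ with $\ol{U_0} \subset U$, and a polydisc $W'_{y'} \ni y'$ such that $|g_{I_0, s}(x, y)|_{h''}^2 \ge c > 0$ uniformly on $S(s_0) \times U_0 \times W'_{y'}$. On $U$ the form $\w''$ is K\"ahler, so $\w''|_{X''_{t'} \cap U_0}$ restricts to a K\"ahler metric on the slice $\{y = t'\} \cap U_0$ depending continuously on $t'$. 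The pointwise identity $(c_{n-q}/q!)\w''^q \wed \sg \wed h'' \ol\sg = |\sg|_{\w'', h''}^2\, dV_{\w''}$ for $(n-q, 0)$-forms $\sg$ shows that the Hodge integrand dominates $|g_{I_0, s}(x, t')|_{h''}^2$ times a positive smooth volume form bounded uniformly from below on $U_0 \cap X''_{t'}$ as $t' \in W'_{y'}$. Since the whole integrand is pointwise nonnegative on $X''_{t'}$,
\[
  g'(u_s, u_s)(t') \ge \int_{U_0 \cap X''_{t'}} \frac{c_{n-q}}{q!}\, \w''^q \wed \sg_{u_s} \wed h'' \ol{\sg_{u_s}} \ge N > 0
\]
uniformly in $s \in S(s_0)$ and $t' \in W'_{y'} \sm \Dl'$, which is the claim.

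The principal obstacle is the translation of the cohomological non-vanishing of Lemma \ref{non-v} into pointwise positivity of $\sg_{u_s}$ restricted to nearby smooth fibers which persists as $t' \to y'$. This is precisely where the weakly semi-stable reduction is essential: if $B''_j$ had coefficient $b \ge 2$ in ${f''}^*\Dl'$, the pullback $f''^*dt'$ would vanish to order $b-1$ along $B''_j$, forcing $\sg_{u_s}$ to be divisible by $y_m^{b-1}$ and killing the contribution of $B''_j$ to the limit of the Hodge integrand. The reduction to coefficient one, together with the submersion property at $x_0$, is what makes the naive restriction-to-fiber argument succeed.
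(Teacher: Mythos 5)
Your proposal is correct and follows essentially the same route as the paper's proof: split injectivity of $*\CH$ from Proposition \ref{Takegoshi} plus Lemma \ref{non-v} to locate a non-$\mu$-exceptional component $B''_j$ where $*\CH(u_{s_0})$ survives, a general point $x_0$ of $B''_j\cap {f''}^{-1}(y')$ where $f''$ is a submersion (coefficient one), holomorphic extension of the coefficients of $\sg_{u_s}$ across $B''_j$, and a continuity-in-$s$ argument giving a uniform pointwise lower bound that is then integrated over a small polydisc using the non-negativity of the Hodge integrand and the local positivity of $\w''$ and $h''$. The only differences are cosmetic (you bound $|g_{I_0,s}|_{h''}$ directly where the paper trivializes $E''$ and uses $h''\ge a\,\mathrm{Id}$, $\w''\ge a\,\w_{eu}$ to make the volume estimate explicit).
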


\begin{proof} 
(1)
We first claim the following variant of Lemma \ref{non-v}.
Let $u \in H^0(Y', F')$, and assume $u$ does not vanish at $y'$. 
Then there exists a non-$\mu$-exceptional component $B''_j$ in 
${f''}^*\Dl' = \sum B''_j + B''_{exc}$, 
such that $* \CH (u) \in H^0(X'', \Om_{X''}^{n+m-q} \ot E'')$
does not vanish identically along $B''_j \cap {f''}^{-1}(y')$.

In fact, by Proposition \ref{Takegoshi}, 
the image $*\CH (H^0(Y', F'))$ is a direct summand of 
$H^0(Y',$ $f''_*(\Om_{X''}^{n+m-q} \ot E''))$ as an $H^0(Y', \CO_{Y'})$-module.
In particular, $* \CH (u) \in H^0(X'', \Om_{X''}^{n+m-q} \ot E'')$
does not vanish at $y' \in Y'$ as an element of an $H^0(Y', \CO_{Y'})$-module.
Then we apply Lemma \ref{non-v}.

(2)
For our nowhere vanishing $u_{s_0}$, we take a non-$\mu$-exceptional 
component 
$$
	B'' = B''_j
$$ 
in ${f''}^*\Dl'$ such that $*\CH (u_{s_0})$ 
does not vanish identically along $B'' \cap {f''}^{-1}(y')$.
We take a general point $x_0 \in B'' \cap {f''}^{-1}(y')$, and a local
coordinate $(U; z = (z_1, \ldots, z_{n+m}))$ centered at $x_0 \in X''$
such that $f''$ is given by
$t' = f''(z) = (z_{n+1}, \ldots, z_{n+m})$ on $U$.
In particular $({f''}^*\Dl')|_U = B''|_U = \{z_{n+m} = 0\}$.
Over $U$, we may assume that the bundle $E''$ is also trivialized, i.e.,
$E''|_U \cong U \times \BC^{r(E)}$, where $r(E)$ is the rank of $E$.
Using these local trivializations on $U$, we have a constant
$a > 0$ such that
(i) $\w'' \ge a \w_{eu}$ on $U$, 
where $\w_{eu} = \ai/2\sum_{i=1}^{n+m} dz_i \wed d\ol{z_i}$
(recall $\w''$ is positive definite around $x_0$ by 
Lemma \ref{positive}!!), and
(ii) $h'' \ge a \text{Id}$ on $U$ as Hermitian matrixes.
Here we regard $h''|_U(z)$ as a positive definite Hermitian matrix 
at each $z \in U$ in terms of $E''|_U \cong U \times \BC^{r(E)}$, 
and here $\text{Id}$ is the $r(E) \times r(E)$ identity matrix.

(3)
Let $s \in S^{2r-1}$.
By Proposition \ref{Takegoshi}, we can write as
$(* \CH (u_s))|_{X'' \sm {f''}^{-1}(\Dl')} = \sg_s \wed {f''}^*dt'$ for some
$\sg_s \in A^{n-q,0}(X'' \sm {f''}^{-1}(\Dl'), E'')$.
We write
$\sg_s = \sum_{I \in I_{n-q}} \sg_{sI} dz_I + R_s$ on $U \sm B''$.
Here $I_{n-q}$ is the set of all multi-indexes 
$1 \le i_1 < \ldots < i_{n-q} \le n$ of length $n-q$ 
(not including $n+1, \ldots, n+m$),
$\sg_{sI} = {}^t(\sg_{sI,1}, \ldots, \sg_{sI,r(E)})$ is a row vector valued 
holomorphic function with $\sg_{sI,i} \in H^0(U \sm B'', \CO_{X''})$, and here 
$R_s = \sum_{k=1}^m R_{sk} \wed dz_{n+k} \in A^{n-q,0}(U \sm B'', E'')$. 
Then 
$$
 \sg_s \wed {f''}^*dt' 
	 = \bigg(\sum_{I \in I_{n-q}} \sg_{sI} dz_I \bigg) 
		\wed dz_{n+1} \wed \ldots \wed dz_{n+m}
$$ 
on $U \sm B''$.
Since $\sg_s \wed {f''}^*dt' = (* \CH (u_s))|_{X'' \sm {f''}^{-1}(\Dl')}$ 
and $* \CH (u_s) \in H^0(X'', \Om_{X''}^{n+m-q} \ot E'')$,
all $\sg_{sI}$ can be extended holomorphically on $U$.
We still denote by the same latter
$\sg_{sI} = {}^t(\sg_{sI,1}, \ldots, \sg_{sI,r(E)})$ its extension.

At the point $s_0 \in S^{2r-1}$, since $*\CH (u_{s_0})$ does not vanish 
identically along $B'' \cap {f''}^{-1}(y')$, and since 
$x_0 \in B'' \cap {f''}^{-1}(y')$ 
is general, we have at least one $\sg_{s_0J_0,i_0} \in H^0(U, \CO_{X''})$
such that $\sg_{s_0J_0, i_0}(x_0) \ne 0$.
We take such 
$$
	J_0 \in I_{n-q} \text{ and } i_0 \in \{1, \ldots, r(E)\}.
$$

(4)
By the continuity of $s \mapsto u_s \mapsto *\CH(u_s)$, 
we can take an $\ep$-polydisc 
$U(\ep) = \{z = (z_1, \ldots, z_{n+m}) \in U ; \ 
|z_i| < \ep \text{ for any } 1 \le i \le n+m \}$ centered at $x_0$ for some
$\ep > 0$, and a neighbourhood $S(s_0)$ of $s_0$ in $S^{2r-1}$ such that
$$
	A := \inf\{ |\sg_{sJ_0,i_0}(z)| ; \
		 s \in S(s_0), \ z \in U(\ep) \} > 0.
$$
We set $W'_{y'} := f''(U(\ep))$, which is an open neighbourhood of
$y' \in Y'$, since $f''$ is flat (in particular it is open).
Then for any $s \in S(s_0)$ and any $t' \in W'_{y'} \sm \Dl'$, 
we have
\begin{equation*} 
\begin{aligned}
\int_{X''_{t'}} (c_{n-q}/q!) 
	({\w''}^q \wed \sg_{s} \wed h''\ol{\sg_{s}})|_{X''_{t'}} 
&
\ge
a \int_{X''_{t'} \cap U}(c_{n-q}/q!) 
	({\w''}^q \wed \sg_{s} \wed \ol{\sg_{s}})|_{X''_{t'} \cap U} \\
& = 
a^{q+1} \int_{z \in X''_{t'} \cap U}
	\sum_{I \in I_{n-q}} \sum_{i=1}^{r(E)} |\sg_{sI,i}(z)|^2 dV_n \\
& \ge 
a^{q+1} \int_{z \in X''_{t'} \cap U(\ep)} A^2 \ dV_n \\
& = 
a^{q+1} A^2 (\pi \ep^2)^n.
\end{aligned}
\end{equation*} 
Here $dV_n = (\ai/2)^n \bigwedge_{i=1}^n dz_i \wed d\ol{z_i}$
is the standard euclidean volume form on $\BC^n$.
\end{proof}

\begin{lem} \label{Ft12}
(cf.\ \cite[1.12]{Ft}.) \ 
Under Assumption \ref{a1} and notations after that, 
let $y' \in \Dl'$. 
Then there exist a neighbourhood $W'_{y'}$ of $y'$ in $Y'$
and a positive number $N$, such that 
$g'(u_s, u_s)(t') \ge N$ for any $s \in S^{2r-1}$
and any $t' \in W'_{y'} \sm \Dl'$.
\end{lem}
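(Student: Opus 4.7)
The statement is exactly the uniform version of Lemma \ref{Ft11}, allowing $s$ to range over the whole unit sphere $S^{2r-1}$ rather than over a small neighborhood of a fixed $s_0$. The natural tool is compactness of the sphere.

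My plan is as follows. First, for each $s_0 \in S^{2r-1}$, invoke Lemma \ref{Ft11} at the given point $y' \in \Dl'$ to obtain an open neighborhood $S(s_0) \subset S^{2r-1}$ of $s_0$, an open neighborhood $W'_{y'}(s_0) \subset Y'$ of $y'$, and a positive number $N(s_0) > 0$ such that
\[
 g'(u_s, u_s)(t') \ge N(s_0) \quad \text{for all } s \in S(s_0), \ t' \in W'_{y'}(s_0) \sm \Dl'.
\]
The collection $\{S(s_0)\}_{s_0 \in S^{2r-1}}$ is an open cover of the compact set $S^{2r-1}$, so by compactness there is a finite subcover $S(s_0^{(1)}), \ldots, S(s_0^{(k)})$.

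Next, set
\[
 W'_{y'} := \bigcap_{i=1}^{k} W'_{y'}(s_0^{(i)}), \qquad
 N := \min_{1 \le i \le k} N(s_0^{(i)}).
\]
Since the intersection is finite, $W'_{y'}$ is an open neighborhood of $y'$, and $N > 0$. For any $s \in S^{2r-1}$, there is some $i$ with $s \in S(s_0^{(i)})$, and then for any $t' \in W'_{y'} \sm \Dl' \subset W'_{y'}(s_0^{(i)}) \sm \Dl'$ the pointwise estimate from Lemma \ref{Ft11} gives
\[
 g'(u_s, u_s)(t') \ge N(s_0^{(i)}) \ge N,
\]
which is the desired conclusion.

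There is essentially no obstacle here; the entire content has already been packaged into Lemma \ref{Ft11}, and this lemma is only a matter of combining finitely many local estimates via compactness of $S^{2r-1}$. The one point worth noting is that the neighborhood $W'_{y'}(s_0)$ produced by Lemma \ref{Ft11} genuinely depends on $s_0$ (through the local coordinate chart $(U; z)$, the component $B''$, and the indices $J_0, i_0$ chosen there), which is precisely why the finite cover step is needed rather than a direct argument.
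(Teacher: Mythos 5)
Your proposal is correct and is exactly the paper's argument: the authors simply say "Since $S^{2r-1}$ is compact, this is clear from Lemma \ref{Ft11}," and you have spelled out the finite-subcover, minimum-of-constants details. Nothing further is needed.
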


\begin{proof}
Since $S^{2r-1}$ is compact, this is clear from Lemma \ref{Ft11}.
\end{proof}



\section{Plurisubharmonic Extension} \label{5}

We still discuss in Set up \ref{local} and \S \ref{ssred}.
We are ready to talk about, say ``the plurisubharmonic extension''
of the quotient metric $\go$ of $\CO(1)|_{\pi^{-1}(Y \sm \Dl)}$
in Theorem \ref{bdd}.
Since such an extension 
is a local question on $\BP(F)$,
we shall discuss around a fixed point $P \in \BP(F)$.
We take a quotient line bundle $F \lra L$ so that
$P$ corresponds to $F_{\pi(P)} \lra L_{\pi(P)}$.
We also take a trivialization of $F$ given by 
$e_1, \ldots, e_r \in H^0(Y, F)$, so that
the kernel $M$ of $F \lra L$ is generated by $e_1, \ldots, e_{r-1}$.
A choice of a frame $e_1, \ldots, e_r$ also gives a trivialization
$\BP(F) \cong Y \times \BP^{r-1}$.
From now on, we identify $\BP(F)$ and $Y \times \BP^{r-1}$.

\subsection{Quotient metric}

We first describe the quotient metric $\go$ around $P$.
Let $[a] = (a_1 : \ldots : a_r)$ be the homogeneous coordinate of $\BP^{r-1}$.
Then $P = \pi(P) \times (0 : \ldots : 0 : 1)$ in $Y \times \BP^{r-1}$.
Let 
$$
	U = Y \times \{[a] \in \BP^{r-1} ;\ a_r \neq 0 \}
$$
be a standard open neighbourhood of $P$.
This neighbourhood of $P$ (or of $F_{\pi(P)} \lra L_{\pi(P)}$) 
is also described as follows.
Let $a = (a_1, \ldots, a_{r-1}) \in \BC^{r-1}$ 
(be an inhomogeneous coordinate of $\BP^{r-1}$).
We set $e_{ia} = e_i+a_ie_r \in H^0(Y, F)$ for every $1 \le i \le r-1$, 
and $e_{ra} = e_r$,
and let $M_a$ be the subbundle of $F$ generated by $e_{1a}, \ldots, e_{r-1a}$,
and let $L_a = F/M_a$ be the quotient line bundle on $Y$.
Every point $t \times a \in U$ corresponds to a subspace 
$M_{at} \subset F_t$ generated by $e_1(t)+a_1e_r(t), \ldots,
e_{r-1}(t)+a_{r-1}e_r(t)$ and hence the quotient space $L_{at} = F_t/M_{at}$.
For every fixed $a \in \BC^{r-1}$, we have a nowhere vanishing section 
$$
	\what e_{ra} \in H^0(Y, L_a)
$$ 
defined by $\what e_{ra} : t \in Y \mapsto \what e_{ra}(t) \in L_{at}$.
Here $\what e_{ra}(t)$ is the image of $e_{r}(t) \in F_t$
under the quotient $F_t \lra L_{at}$.
We have a canonical nowhere vanishing section
$$
	\what e_r \in H^0(U, \CO(1))
$$ 
defined by 
$\what e_r : t \times a \in U \mapsto \what e_{ra}(t) \in L_{at}$.

Let $a \in \BC^{r-1}$.
With respect to the global frame $\{e_{ia}\}_{i=1}^r$ of $F$, 
the Hodge metric $g$ on $F|_{Y \sm \Dl}$ is written as
$g_{i\ol j a} := g(e_{ia}, e_{ja}) \in A^0(Y \sm \Dl, \BC)$ for 
$1 \le i, j \le r$.
At each point $t \in Y \sm \Dl$, $(g_{i\ol j a}(t))_{1 \le i, j \le r}$
is a positive definite Hermitian matrix, in particular
$(g_{i\ol j a}(t))_{1 \le i, j \le r-1}$ is also positive definite.
We let $(g^{\ol i j}_a(t))_{1 \le i, j \le r-1}$ be the inverse matrix.
The pointwise orthogonal projection of $e_r$ to 
$(M_a|_{Y \sm \Dl})^\perp$ 
with respect to $g$ is given by
$$
  P_a(e_r) 
  = e_r - \sum_{i=1}^{r-1}\sum_{j=1}^{r-1}e_{ia}g^{\ol i j}_ag_{j \ol{r} a}
  \ \in A^0(Y \sm \Dl, F).
$$
Then the quotient metric $g_{L_a}$ on the line bundle ${L_a}|_{Y \sm \Dl}$ 
is described as
$$
	 g_{L_a}(\what e_{ra}, \what e_{ra}) = g(P_a(e_r), P_a(e_r)).
$$
Then, at each 
$t \times a \in U \sm \pi^{-1}(\Dl) = (Y \sm \Dl) \times \BC^{r-1}$, 
we have
$$
	\go(\what e_r, \what e_r)(t \times a) 
	= g_{L_a}(\what e_{ra}, \what e_{ra})(t). 
$$
We already know that 
$-\log(\go(\what e_r, \what e_r)|_{(Y \sm \Dl) \times \BC^{r-1}})$ 
is plurisubharmonic (\cite[1.2]{B}\,\cite[1.1]{MT2}).
What we want to prove is

\begin{lem} \label{bdd2}
Let $\ep$ be a real number such that $0 < \ep < (2(r-1))^{-2}$,
and let $D_\ep = \{ a = (a_1, \ldots, a_{r-1}) \in \BC^{r-1};\
\sum_{i=1}^{r-1}|a_i|^2 < \ep \}$.
Then $-\log(\go(\what e_r, \what e_r)|_{(Y \sm \Dl) \times D_\ep})$ 
extends as a plurisubharmonic function on $Y \times D_\ep$.
\end{lem}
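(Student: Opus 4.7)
The plan is to invoke the standard extension theorem for plurisubharmonic functions across analytic sets: since $\go$ has semi-positive curvature on $\pi^{-1}(Y\sm\Dl)$, the function $\phi := -\log \go(\what e_r, \what e_r)$ is psh on $(Y\sm\Dl)\times D_\ep$, and to extend it across $\Dl\times D_\ep$ it suffices to show that $\phi$ is locally bounded above there. The statement therefore reduces to finding, for each compact $K\subset Y$, an open neighbourhood $W \supset K\cap\Dl$ in $Y$ and a constant $C > 0$ with
$$
g_{L_a}(\what e_{ra}, \what e_{ra})(t) \;\ge\; C \qquad \text{for all } (t, a) \in (W \sm \Dl) \times D_\ep.
$$

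First I would rewrite the quotient norm variationally: any $v \in F_t$ representing $\what e_{ra}(t)$ is of the form $v = \sum s_i e_i(t)$ with $s\in\BC^r$ satisfying the affine constraint $s_r - \sum_{i<r} a_i s_i = 1$. Applying Cauchy--Schwarz to this constraint gives $1 \le |s|\sqrt{1 + |a|^2}$, hence $|s| \ge (1 + \ep)^{-1/2}$ for $a \in D_\ep$. The task then becomes to bound the Hermitian form $g(\sum s_i e_i, \sum s_i e_i)(t)$ from below by a positive multiple of $|s|^2$, uniformly for $s\in\BC^r$ and $t$ in some neighbourhood of $\Dl$.

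Next I would transport this to the semi-stable reduction $\tau : Y' \lra Y$ of \S\ref{ssred}, where the relevant uniform lower bound is Lemma \ref{Ft12}, stated however for the Hodge metric $g'$ on $F'$ in terms of a global frame $e'_1, \ldots, e'_r$ of $F'$ rather than the pulled-back sections $\tau^*e_1, \ldots, \tau^*e_r$ of $\tau^*F$. By Lemma \ref{inj} the injection $\vph : F' \hookrightarrow \tau^*F$ is an isomorphism on $Y'\sm\Dl'$, and by Lemma \ref{isometry} an isometry there for $g'$ and $\tau^*g$. Writing $\vph(e'_j) = \sum_i A^i_j \tau^*e_i$ with $A = (A^i_j)$ holomorphic on $Y'$ and invertible on $Y'\sm\Dl'$, and setting $w = A^{-1}s$, the norm $\tau^*g(\sum s_i e_i, \sum s_i e_i)$ becomes $g'(\sum w_k \vph(e'_k), \sum w_k \vph(e'_k))$. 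Lemma \ref{Ft12}, together with homogeneity in $w$ and a finite subcover of $\tau^{-1}(K) \cap \Dl'$, then yields an open neighbourhood $W' \subset Y'$ of $\tau^{-1}(K) \cap \Dl'$ and $N > 0$ with $g'(\sum w_k \vph(e'_k), \sum w_k \vph(e'_k))(t') \ge N|w|^2$ on $W' \sm \Dl'$.

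The main obstacle is the final step: controlling $|w|$ from below in terms of $|s|$. The determinant of $A$ typically vanishes along $\Dl'$ (it vanishes exactly on the support of the cokernel of $\vph$), so $A^{-1}$ is in general unbounded near $\Dl'$, and a direct estimate in the $\vph(e'_k)$-basis transfers back to the $\tau^*e_i$-basis only at the cost of multiplying by $\|A^{-1}\|$, which could destroy the bound. The resolution is the observation that only an upper bound on $\|A\|$, not on $\|A^{-1}\|$, is needed: since $A$ is holomorphic on $Y'$, $\|A(t')\|_{\mathrm{op}} \le K_0$ on a relatively compact subset $\overline{W'_c} \subset W'$ whose interior still contains $\tau^{-1}(K) \cap \Dl'$, whence $|w| = |A^{-1}(t')s| \ge |s|/K_0 \ge K_0^{-1}(1+\ep)^{-1/2}$. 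This gives the uniform lower bound $g_{L_a}(\what e_{ra}, \what e_{ra})(t) \ge N/\bigl(K_0^2(1+\ep)\bigr)$ on $\tau(W'_c) \sm \Dl$, and $W := \tau(W'_c)$ (open since $\tau$ is finite and flat) supplies the required neighbourhood.
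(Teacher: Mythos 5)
Your core estimate is correct, and it takes a genuinely different and in fact cleaner route than the paper's. The paper computes the orthogonal projection $P_a(e_r)$ explicitly in the frame $\{e_{ia}\}$, transports it to the cover, and then must show $|s|^2\ge (2C_\Phi)^{-2}$ by a two-case analysis (either $|s_r|$ is not small, or the constraint $\sg_{ra}=1-\sum\sg_{ia}a_i$ forces some $|v_j|$ to be large), which requires the auxiliary eigenvalue Lemma \ref{eigen} for $\Psi=\Phi_0\,{}^t\ol{\Phi_0}$, a frame adapted to $M\subset F$ and $M'\subset F'$ so that $\Phi$ is block triangular, and the specific hypothesis $\ep<(2(r-1))^{-2}$. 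Your variational characterization of the quotient norm as the infimum of $g(v,v)$ over lifts $v=\sum s_ie_i$ subject to $s_r-\sum_{i<r}a_is_i=1$, together with Cauchy--Schwarz giving $|s|\ge(1+\ep)^{-1/2}$ and the elementary inequality $|\Phi^{-1}s|\ge|s|/\|\Phi\|_{\mathrm{op}}$, bypasses all of this: it needs no adapted frame, no control of $\Phi^{-1}$, no eigenvalue lemma, and no restriction on $\ep$. Your identification of the real danger (that $\Phi^{-1}$ blows up along $\Dl'$) and of why it is harmless (only an upper bound on $\|\Phi\|$ enters) is exactly the right point, and is the same phenomenon the paper exploits, just packaged more transparently.

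There is, however, one genuine gap in your reduction step. You claim a uniform lower bound on a full neighbourhood $W$ of $K\cap\Dl$, obtained by a compactness argument from Lemma \ref{Ft12}. But Lemma \ref{Ft12} is only available under Assumption \ref{a1}, i.e.\ where $F'$ is (locally) free of rank $r$ and the conditions of Set up \ref{local} hold for $f''$; moreover the transition matrix $\Phi$ is only defined where $F'$ is trivialized. These hypotheses fail in general along a closed analytic subset $Z'\subset\Dl'$ of $\codim_{Y'}Z'\ge 2$, so your finite subcover cannot reach the points of $\tau^{-1}(K)\cap Z'$, and the uniform bound as you state it is not directly obtainable there. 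The repair is standard and is what the paper does: establish the lower bound only near points of $\Dl\sm Z$ (with $Z=\tau(Z')$), conclude by Riemann-type extension that $-\log\go(\what e_r,\what e_r)$ is plurisubharmonic on $(Y\sm Z)\times D_\ep$, and then extend across the codimension $\ge 2$ set $Z\times D_\ep$ by Hartogs-type extension, which needs no boundedness input. With that two-step extension inserted, your argument goes through.
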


In case $r = 1$, this (as well as Lemma \ref{bdd3} and \ref{bdd4} below) 
should be read that \linebreak
$-\log(\go(\what e_r, \what e_r)|_{Y \sm \Dl}) 
= -\log(g(e_1, e_1)|_{Y \sm \Dl})$
extends as a plurisubharmonic function on $Y$.
Since $P \in \BP(F)$ is arbitrary,
this lemma implies Theorem \ref{bdd}.

\subsection{Boundedness and reduction on the ramified cover}

In Lemma \ref{inj}, we have a natural inclusion $\vph : F' \lra \tau^*F$, 
which is isomorphic over $Y' \sm \Dl'$.
We will reduce our study of $F$ to that of $F'$ via this $\vph$.
Let $L' \subset \tau^*L$ be the image of the composition
$F' \lra \tau^*F \lra \tau^*L$,
and let $M'$ be the kernel of the quotient $F' \lra L'$.
Then we have the following commutative diagram:
\begin{equation*} 
\begin{CD}
    0  @>>>  M'    @>>>      F'   @>>>    L'   @>>> 0 \\
    @.     @VVV    @VV{\vph}V    @VVV     @.  \\ 
    0  @>>> \tau^*M  @>>> \tau^*F @>>> \tau^*L @>>> 0. 
\end{CD}
\end{equation*} 
Here, horizontals are exact, verticals are injective. 
Since $F', L'$ and $M'$ are all torsion free $\CO_{Y'}$-module sheaves, 
we can find a closed analytic subset $Z' \subset \Dl'$
of $\codim_{Y'} Z' \ge 2$ such that
$F', L'$ and $M'$ are all locally free on $Y' \sm Z'$.
We may also assume that $f''$ is flat over $Y' \sm Z'$, and 
$\Supp {f''}^*\Dl' \lra \Dl'$ is relative normal crossing over $\Dl' \sm Z'$.
We set $Z = \tau(Z') \subset \Dl$ a closed analytic subset 
of $\codim_Y(Z) \ge 2$.
We then take an arbitrary point 
$$
	y \in \Dl \sm Z \text{ and let } 
	y' = \tau^{-1}(y) \in \Dl' \sm Z'. 
$$
Then Lemma \ref{bdd2} is reduced to the following

\begin{lem} \label{bdd3}
There exists a neighbourhood $W_y$ of $y$ in $Y$
such that $\go(\what e_{r}, \what e_{r})$ is bounded
from below by a positive constant on $(W_y \sm \Dl) \times D_\ep$,
for $D_\ep$ in Lemma \ref{bdd2}.
\end{lem}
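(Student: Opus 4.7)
The plan is to transport the constrained minimum defining $\go(\what e_r, \what e_r)$ from $Y$ to $Y'$ through the isometry of Lemma~\ref{isometry} and then invoke Lemma~\ref{Ft12}. Setting $v_a := (-a_1, \ldots, -a_{r-1}, 1) \in \BC^r$, each element of $F_t$ lifting $\what e_{ra}(t) \in L_{at}$ has the form $u_s := \sum_{i=1}^r s_i e_i(t)$ for a unique $s \in \BC^r$ with $s v_a^T = 1$, so
\[
\go(\what e_r, \what e_r)(t \times a) \,=\, \min\{\, g(u_s, u_s)(t) : s v_a^T = 1 \,\}.
\]
Since $y' \in \Dl' \sm Z'$, shrinking if necessary, there is a polydisc neighborhood $W'_{y'}$ of $y'$ on which $F'$ admits a holomorphic frame $e'_1, \ldots, e'_r$ and on which Assumption~\ref{a1} holds; let $A(t')$ denote the holomorphic matrix with $\vph(e'_i) = \sum_j A_{ij}(t')\, \tau^* e_j$, invertible on $W'_{y'} \sm \Dl'$ with inverse $B(t') := A(t')^{-1}$.

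Next I would push the metric across. For $t' \in W'_{y'} \sm \Dl'$, the vector $\vph^{-1}(\tau^* u_s) \in F'_{t'}$ has coordinates $s B(t')$ with respect to the frame $e'_j$, i.e.\ $\vph^{-1}(\tau^* u_s) = u'_{s B(t')}$ where $u'_\sg := \sum_j \sg_j e'_j$. Combining the isometry of Lemma~\ref{isometry} with Lemma~\ref{Ft12} applied on $W'_{y'}$ (and homogeneity in $\sg$), I obtain a constant $N > 0$ such that
\[
g(u_s, u_s)(\tau(t')) \,=\, g'(u'_{s B(t')}, u'_{s B(t')})(t') \,\ge\, N\, |s B(t')|^2
\]
for every $s \in \BC^r$ and every $t' \in W'_{y'} \sm \Dl'$.

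Finally I would solve the minimization and transfer back to $Y$. The substitution $s' := s B(t')$, i.e.\ $s = s' A(t')$, rewrites the constraint $s v_a^T = 1$ as the single complex equation $s' \cdot (A(t') v_a^T) = 1$, and the classical least-squares formula yields
\[
\min_{s v_a^T = 1}\, |s B(t')|^2 \,=\, \frac{1}{|A(t')\, v_a^T|^2}.
\]
Shrinking $W'_{y'}$ so that $|A(t')| \le C_A$ there, and using $|v_a|^2 = 1 + |a|^2 < 2$ on $D_\ep$, I conclude that $\go(\what e_r, \what e_r)(\tau(t') \times a) \ge N/(4 C_A^2)$ uniformly on $(W'_{y'} \sm \Dl') \times D_\ep$. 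Because $\tau$ is finite with $\tau^{-1}(y) = \{y'\}$, a sufficiently small polydisc $W_y \ni y$ has $\tau^{-1}(W_y) \subset W'_{y'}$, which gives the lemma. The only substantive input is Lemma~\ref{Ft12}, whose content (uniform non-vanishing after weakly semi-stable reduction, via Lemma~\ref{non-v}) was already established; everything else in the proof of Lemma~\ref{bdd3} is linear-algebra bookkeeping.
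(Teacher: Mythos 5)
Your proof is correct, and it takes a genuinely different route from the paper's. The paper proves a quantitative version (Lemma \ref{bdd4}): it computes the orthogonal projection $P_a(e_r)$ explicitly via the Gram matrix $(g_{i \ol j a})$ and its inverse, transports the resulting minimizer through $\vph_{t'}^{-1}$ to get a coefficient vector $s$, and then must establish the lower bound $|s|^2 \ge (2C_\Phi)^{-2}$; that last step requires the eigenvalue bound on $\Psi^{-1} = (\Phi_0\,{}^t\ol{\Phi_0})^{-1}$ (Lemma \ref{eigen}) and a case analysis on $|s_r|$, which is exactly where the hypothesis $\ep < (2(r-1))^{-2}$ is used. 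You instead exploit that the quotient metric is an infimum over the affine space of lifts, transport the entire constrained minimization through the isometry of Lemma \ref{isometry}, and then --- after upgrading Lemma \ref{Ft12} to $g'(u'_\sg, u'_\sg)(t') \ge N|\sg|^2$ by homogeneity --- solve the transported problem exactly as a Euclidean least-squares problem, obtaining $1/|A(t')v_a^T|^2$. Because an infimum transforms contravariantly, you only ever need an \emph{upper} bound on the matrix of $\vph$ (automatic from holomorphy on a relatively compact neighbourhood), never a lower bound on $\vph^{-1}$ near $\Dl'$; Lemma \ref{eigen} and the case analysis disappear entirely, and your argument works for any bounded $D_\ep$, showing that the specific radius $(2(r-1))^{-2}$ is an artifact of the paper's method rather than of the statement. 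Both routes rest on the same essential inputs, namely Lemmas \ref{isometry} and \ref{Ft12}; the paper's yields the explicit constant $N(2C_\Phi)^{-2}$ tied to $\Phi$ and $\Phi_0^{-1}$, while yours is shorter and conceptually cleaner. Two small points of hygiene: the neighbourhood you end with should be (contained in) the $W'_{y'}$ produced by Lemma \ref{Ft12}, so shrink once more after fixing the frame; and the step $|A(t')v_a^T| \le C_A |v_a|$ needs $C_A$ to bound the operator norm of $A(t')$ rather than its entries (harmless, since this only changes $C_A$ by a factor of $r$).
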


In fact, since $y \in \Dl \sm Z$ is arbitrary, by Riemann type extension, 
$-\log(\go(\what e_{r}, \what e_{r}))$ 
becomes plurisubhamonic on $(Y \sm Z) \times D_\ep$, and then it is
plurisubhamonic on $Y \times D_\ep$ by Hartogs type extension.


To show Lemma \ref{bdd3}, we need to analyze the map 
$\vph : F' \lra \tau^*F$ and its inverse.
We shall formulate and prove a quantitative version of Lemma \ref{bdd3}
as Lemma \ref{bdd4}.

Since our assertion in Lemma \ref{bdd3} is local around the point
$y$ (and $y'$) and over there for $\pi : \BP(F) \lra Y$,
by replacing $Y$ (resp.\ $Y'$) by a small polydisc centered at $y$ 
(resp.\ $y'$), we can also assume that $F' \cong \CO_{Y'}^{\oplus r}$.
In particular the assumption to use Lemma \ref{Ft11} and Lemma \ref{Ft12} 
is satisfied (remind also the choice of $Z'$).
We take a global frame $e'_1, \ldots, e'_r \in H^0(Y', F')$ of $F'$ 
such that $e'_1, \ldots, e'_{r-1}$ generate $M'$ and 
the image $\what e'_r \in H^0(Y', L')$ of $e'_r$
under $F' \lra L'$ generates $L'$.
We still use (the restriction of) the same global frame 
$e_1, \ldots, e_r \in H^0(Y, F)$ of $F$, 
although the point $\pi(P)$ may not belong to the new $Y$ any more.

In terms of those frames $\{\tau^*e_j\}$ and $\{e'_j\}$,
we represent the bundle map $\vph : F' \lra \tau^*F$ on $Y'$.
For each $j$, we write $\vph(e'_j) = \sum_{i=1}^r (\tau^*e_i) \vph_{ij}$
for some $\vph_{ij} \in H^0(Y', \CO_{Y'})$.
Then $\vph$ is given by $\Phi = (\vph_{ij})_{1 \le i,j \le r}$ 
an $r \times r$-matrix valued holomorphic function on $Y'$.
Since $\vph(e'_j)$ for $1 \le j \le r-1$ belongs to $H^0(Y', \tau^*M)$,
we have $\vph_{r1} = \ldots = \vph_{r r-1} \equiv 0$.
We write 
$$
\Phi =
	\begin{pmatrix}
	\Phi_0 & \vph_{*r} \\
	0 \ \ \cdots \ \ 0 & \vph_{rr} 
	\end{pmatrix}, 
$$
accordingly so that 
$(\vph(e'_1), \ldots, \vph(e'_r)) = (\tau^*e_1, \ldots, \tau^*e_r) \Phi$.
Here $\vph_{*r} = {}^t(\vph_{1r}, \ldots, \vph_{r-1 r})$, and
the last part $\vph_{rr}$ represents the line bundle homomorphism
$L' \lra \tau^*L$ on $Y'$.

By replacing $Y$ and $Y'$ by smaller polydiscs, we may assume that 
there exists a constant $C_{\Phi 1} > 0$ such that 
$$
	|\vph_{ij}(t')| < C_{\Phi 1}
$$ 
for any pair $1 \le i, j \le r$ and any $t' \in Y'$.
Since $\vph$ is isomorphic over $Y' \sm \Dl'$, 
we can talk about the inverse there.
Let $\Phi^{-1} = (\vph^{ij})_{1 \le i, j \le r}$
be the inverse on $Y' \sm \Dl'$.
Then $\Phi_0^{-1} = (\vph^{ij})_{1 \le i, j \le r-1}$,
$\vph^{r1} = \ldots = \vph^{r r-1} \equiv 0$,
$\vph^{rr} = \vph_{rr}^{-1}$, and
$\vph^{ir} = - (\sum_{j=1}^{r-1} \vph^{ij} \vph_{jr}) \vph_{rr}^{-1}$:\
$$
\Phi^{-1} =
	\begin{pmatrix}
	\Phi_0^{-1} & - \Phi_0^{-1} \vph_{*r} \vph_{rr}^{-1} \\
	0 \ \ \cdots \ \ 0 & \vph_{rr}^{-1} 
	\end{pmatrix}. 
$$
Needless to say,
$(\tau^*e_1, \ldots, \tau^*e_r) = (\vph(e'_1), \ldots, \vph(e'_r)) \Phi^{-1}$.

\begin{lem} \label{eigen}
Assume $r>1$.
Let $\Psi := \Phi_0 {}^t\ol{\Phi_0} \in A^0(Y', M(r-1, \BC))$ be a 
matrix valued smooth function on $Y'$.
Then there exists a constant $C_{\Phi 2} > 0$ such that
$$
	\lambda_1(\Psi^{-1}(t')) \ge 1/C_{\Phi 2}
$$ 
for any $t' \in Y' \sm \Dl'$, where $\lambda_1(\Psi^{-1}(t'))$ 
is the smallest eigenvalue of the Hermitian matrix $\Psi^{-1}(t')$.
\end{lem}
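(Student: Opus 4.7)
The plan is to reduce the eigenvalue bound to the uniform boundedness of the entries $\varphi_{ij}$ that was established just before the statement. The key observation is that $\Psi = \Phi_0\, {}^t\overline{\Phi_0}$ is Hermitian positive semidefinite on $Y'$, and is actually positive definite on $Y' \setminus \Delta'$. Indeed, $\varphi : F' \to \tau^*F$ is an isomorphism over $Y' \setminus \Delta'$ by Lemma \ref{inj}, and the block upper-triangular shape of $\Phi$ (with $\varphi_{r1} = \cdots = \varphi_{r\,r-1} \equiv 0$) forces $\Phi_0$ to be invertible precisely on $Y' \setminus \Delta'$. So $\Psi^{-1}(t')$ is well-defined for $t' \in Y' \setminus \Delta'$ and its smallest eigenvalue equals $1/\lambda_{\max}(\Psi(t'))$.

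The main step is then to bound $\lambda_{\max}(\Psi(t'))$ from above uniformly on $Y'$. For any positive semidefinite Hermitian matrix $A \in M(r-1,\mathbb{C})$ one has $\lambda_{\max}(A) \le \operatorname{tr}(A)$, so
\[
\lambda_{\max}(\Psi(t')) \;\le\; \operatorname{tr}\bigl(\Phi_0(t')\,{}^t\overline{\Phi_0(t')}\bigr) \;=\; \sum_{1 \le i,j \le r-1} |\varphi_{ij}(t')|^2.
\]
Each entry $\varphi_{ij}$ is a holomorphic function on the polydisc $Y'$ (this is how the matrix was defined), and after shrinking $Y'$ we already assumed $|\varphi_{ij}(t')| < C_{\Phi 1}$ for all $i,j$ and all $t' \in Y'$. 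Therefore the right-hand side is at most $(r-1)^2 C_{\Phi 1}^2$, uniformly in $t' \in Y'$.

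Setting $C_{\Phi 2} := (r-1)^2 C_{\Phi 1}^2$ then yields
\[
\lambda_1(\Psi^{-1}(t')) \;=\; \frac{1}{\lambda_{\max}(\Psi(t'))} \;\ge\; \frac{1}{C_{\Phi 2}}
\]
for every $t' \in Y' \setminus \Delta'$, which is the required inequality. There is no real obstacle here: this lemma is purely a linear-algebra consequence of the earlier sup-norm bound $|\varphi_{ij}| < C_{\Phi 1}$, and the hypothesis $r > 1$ is only there to ensure that the block $\Phi_0$ is a nonempty matrix (when $r = 1$ there is no $M'$ at all and the statement is vacuous, as reflected in the parenthetical convention stated after Lemma \ref{bdd2}).
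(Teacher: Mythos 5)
Your proof is correct and follows essentially the same route as the paper: both reduce the claim to a uniform upper bound on $\lambda_{\max}(\Psi(t'))$ via the entry bound $|\varphi_{ij}|<C_{\Phi 1}$ and the identity $\lambda_1(\Psi^{-1})=1/\lambda_{\max}(\Psi)$, arriving at the same constant $C_{\Phi 2}=(r-1)^2C_{\Phi 1}^2$. The only (immaterial) difference is that you bound $\lambda_{\max}$ by the trace of the positive semidefinite matrix $\Psi$, whereas the paper first bounds the entries of $\Psi$ by $(r-1)C_{\Phi 1}^2$ and then invokes the elementary fact that any eigenvalue of an $n\times n$ matrix is at most $n$ times its largest entry in modulus.
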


\begin{proof}
(1)
At each $t' \in Y'$, $\Psi(t')$ is a Hermitian matrix which is semi-positive.
Moreover it is positive definite for any $t' \in Y' \sm \Dl'$,
since $\Phi_0$ is non-singular on it.
All entries of $\Psi$ are also bounded by a constant on $Y'$,
namely if $\Psi = (\psi_{ij})_{1 \le i, j \le r}$ with 
$\psi_{ij} \in A^0(Y', \BC)$, then $|\psi_{ij}(t')| < (r-1)C_{\Phi 1}^2$
for any pair $1 \le i, j \le r$ and any $t' \in Y'$.
In particular, as we will see below (2), 
there exists a constant $C_{\Phi 2} = (r-1)^2C_{\Phi 1}^2 > 0$ such that
$\lambda_{r-1}(\Psi(t')) \le C_{\Phi 2}$ for any $t' \in Y'$,
where $\lambda_{r-1}(\Psi(t'))$ is the biggest eigenvalue of the matrix
$\Psi(t')$.
On $Y' \sm \Dl'$, we have the inverse $\Psi^{-1}$, whose pointwise
matrix value $\Psi^{-1}(t')$ is also positive definite at each 
$t' \in Y' \sm \Dl'$. 
Then $\lambda_1(\Psi^{-1}(t')) = 1/\lambda_{r-1}(\Psi(t')) 
\ge 1/C_{\Phi 2}$
for any $t' \in Y' \sm \Dl'$.

(2)
We consider in general, a non-zero matrix $A = (a_{ij}) \in M(n, \BC)$.
Let $C = \max \{|a_{ij}| \ ; \ 1 \le i, j \le n\}$.
Then we have $|\lambda| \le nC$ for any eigenvalue $\lambda$ of 
$A$ as follows.
Let $v = {}^t(v_1, \ldots, v_n)$ be a non-zero vector
such that $Av = \lambda v$, and take $p$ such that
$|v_p| = \max \{|v_j| \ ; \ 1 \le j \le n \} > 0$.
Then $\lambda v_p = \sum_{j=1}^n a_{pj}v_j$, and
$|\lambda||v_p| \le \sum_{j=1}^n |a_{pj}||v_j| \le nC |v_p|$.
Hence $|\lambda| \le nC$. 
\end{proof}

We set $C_\Phi = \max \{C_{\Phi 1}, C_{\Phi 2}, 1\}$.


\subsection{Final uniform estimate}

The following is a quantitative version of Lemma \ref{bdd3}:\

\begin{lem} \label{bdd4}
Let $y \in \Dl \sm Z$ and $y' \in \Dl' \sm Z'$ as above in Lemma \ref{bdd3}. 
Let $W'_{y'}$ be a neighbourhood of $y'$ and $N$ be a positive number
as in Lemma \ref{Ft12}, and set $W_y = \tau(W'_{y'})$ a neighbourhood of $y$.
Then $g_{L_a}(\what e_{ra}, \what e_{ra})(t) 
\ge N(2C_\Phi)^{-2}$
for any $t \in W_y \sm \Dl$ and any $a \in D_\ep$.
\end{lem}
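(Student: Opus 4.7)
The plan is to use the isometry $\vph_{t'} : F'_{t'} \to F_t$ from Lemma \ref{isometry} to transport the quotient-norm computation from $F_t$ to $F'_{t'}$, and then apply the uniform lower bound of Lemma \ref{Ft12}. The quotient norm gives $g_{L_a}(\what{e}_{ra}, \what{e}_{ra})(t) = \inf_{v \in M_{at}} \|e_r(t) - v\|_g^2$, and since $\vph_{t'}$ is an isometry for $t' \in Y' \sm \Dl'$, writing $\vph_{t'}^{-1}(e_r(t) - v) = \sum_{i=1}^r s_i e'_i(t')$ with $s = s(v) \in \BC^r$ yields $\|e_r(t) - v\|_g^2 = g'(u_s, u_s)(t')$ for the global section $u_s = \sum_i s_i e'_i \in H^0(Y', F')$. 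Applying Lemma \ref{Ft12} to $u_{s/|s|}$ and rescaling gives $g'(u_s, u_s)(t') \ge N |s|^2$, so it suffices to prove the uniform lower bound $\inf_v |s(v)|^2 \ge (2C_\Phi)^{-2}$.

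To make $s(v)$ explicit, I parametrize $v \in M_{at}$ by $c = (c_1, \ldots, c_{r-1}) \in \BC^{r-1}$ through $v = \sum_{i=1}^{r-1} c_i(e_i(t) + a_i e_r(t))$, so that $e_r(t) - v$ has coordinates $\eta = (-c, \xi) \in \BC^r$ in the basis $\{\tau^*e_j(t')\}$, where $\xi := 1 - \sum_i c_i a_i$. Applying $\Phi^{-1}(t')$ to $\eta$ and using the block upper-triangular form of $\Phi$ gives $s_r = \vph_{rr}^{-1} \xi$ and $s_* = -\Phi_0^{-1} c + \xi \vph_*^r$, or equivalently $\xi = \vph_{rr} s_r$ and $c = -\Phi_0 s_* - s_r \vph_{*r}$. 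The entry-wise bound $|\vph_{ij}| \le C_{\Phi 1} \le C_\Phi$ forces $|\xi|^2 \le C_\Phi^2 |s_r|^2$ and $|\vph_{*r}|^2 \le (r-1) C_\Phi^2$, while Lemma \ref{eigen} states that the largest eigenvalue of $\Psi = \Phi_0 \Phi_0^*$ is $\le C_{\Phi 2}$, hence $\|\Phi_0\|_{\mathrm{op}}^2 \le C_\Phi$ and $|\Phi_0 s_*|^2 \le C_\Phi |s_*|^2$. Combining these with $|u+w|^2 \le 2|u|^2 + 2|w|^2$ yields $|c|^2 \le 2(r-1) C_\Phi^2 |s|^2$.

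The final step is a bootstrap from the identity $1 = \xi + \sum_i c_i a_i$: squaring and Cauchy--Schwarz give $1 \le 2|\xi|^2 + 2|c|^2 |a|^2 \le 2|\xi|^2 + 2\ep |c|^2$. Substituting the previous bounds produces $1 \le 2 C_\Phi^2 |s_r|^2 + 4\ep(r-1) C_\Phi^2 |s|^2 \le (2 + 4\ep(r-1)) C_\Phi^2 |s|^2$, and the hypothesis $\ep < (2(r-1))^{-2}$ makes $4\ep(r-1) < 1$, so $1 \le 3 C_\Phi^2 |s|^2$, whence $|s|^2 \ge 1/(3C_\Phi^2) \ge (2C_\Phi)^{-2}$. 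In the degenerate case $r = 1$ there is no $c$, $\xi = 1$, and $|s|^2 = |\vph_{rr}|^{-2} \ge C_\Phi^{-2}$. The main obstacle I foresee is this bootstrap: individually $|\xi|$ and $|c|$ can each collapse to zero as $t' \to \Dl'$ since entries of $\Phi^{-1}$ blow up, but the affine constraint $\xi + c \cdot a = 1$ forces their combined contribution to $|s|^2$ to stay bounded away from zero, and the smallness of $\ep$ relative to $r-1$ is precisely what closes the argument.
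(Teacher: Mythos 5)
Your proof is correct, and it shares the paper's overall skeleton: transport the computation to $Y'$ via the isometry of Lemma \ref{isometry}, invoke the uniform bound of Lemma \ref{Ft12} after rescaling to the unit sphere, and reduce everything to the estimate $|s|^2 \ge (2C_\Phi)^{-2}$, which is extracted from the block-triangular form of $\Phi^{-1}$, the entrywise bound $C_{\Phi 1}$, and the eigenvalue bound of Lemma \ref{eigen}. Where you genuinely diverge is in how that last estimate is organized. The paper works with the one specific vector $s$ attached to the orthogonal projection $P_a(e_r) = \sg_{ra}e_r - \sum \sg_{ia}e_i$, whose coefficients $\sg_{ia} = \sum_j g_a^{\ol i j}g_{j\ol r a}$ involve the Hodge metric itself, and then runs a dichotomy: either $|s_r| \ge (2C_\Phi)^{-1}$ and one is done, or $|\sg_{ra}| < 1/2$, which by pigeonhole produces some $j$ with $|\sg_{ja}| > 1/(2(r-1)\sqrt{\ep})$ and hence a large component $v_j$, fed into $\langle \Psi^{-1}v, v\rangle \ge |v|^2/C_\Phi$. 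You instead prove the stronger statement that $|s(v)|^2 \ge (2C_\Phi)^{-2}$ for \emph{every} $v$ in the affine subspace $e_r + M_{at}$, by inverting the relation to write $\xi = \vph_{rr}s_r$ and $c = -\Phi_0 s_* - s_r\vph_{*r}$, bounding $|\xi|^2 \le C_\Phi^2|s_r|^2$ and $|c|^2 \le 2(r-1)C_\Phi^2|s|^2$, and closing with the affine constraint $1 = \xi + c\cdot a$ together with $|a|^2 < \ep < (2(r-1))^{-2}$; the quotient norm being an infimum then finishes the argument. What your route buys is that the projection coefficients never need to be computed and no case split is needed; the genericity is absorbed into the variational characterization of the quotient metric. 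What the paper's route buys is that only a single vector $s$ ever has to be controlled. Both uses of Lemma \ref{eigen} are equivalent (smallest eigenvalue of $\Psi^{-1}$ versus operator norm of $\Phi_0$), your constants ($|s|^2 \ge 1/(3C_\Phi^2) \ge (2C_\Phi)^{-2}$) are consistent with the claimed bound, and your separate treatment of $r=1$ matches the paper's.
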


\begin{proof}
We take arbitrary $t \in W_y \sm \Dl$ and $a \in D_\ep$,
and take one $t' \in W'_{y'}$ such that $\tau(t') = t$.
In case $r=1$, we have
$\go(\what e_r, \what e_r)(t) 
= g(e_1, e_1)(t) = g'(e_1', e_1')(t')|\vph_{11}^{-1}(t')|^2$.
Then by Lemma \ref{Ft12}, 
$g'(e_1', e_1')(t')|\vph_{11}^{-1}(t')|^2 \ge N C_{\Phi 1}^{-2}$. 
This proves Lemma \ref{bdd3} in case $r=1$.
For the rest, we consider in case $r > 1$.

(1)
We reduce an estimate on $g_{L_a}$ to that on $g'$ as follows.
We set $\sg_{ia} = \sum_{j=1}^{r-1} g^{\ol i j}_a g_{j \ol{r} a}$
for $1 \le i \le r-1$ and $\sg_{ra} = 1 - \sum_{i=1}^{r-1} \sg_{ia}a_i$,
which are in $A^0(Y \sm \Dl, \BC)$.
We can write as
$P_a(e_r) = \sg_{ra}e_r - \sum_{i=1}^{r-1} \sg_{ia}e_i$ on $Y \sm \Dl$.
Then
$\tau^* P_a(e_r) 
= \sg_{ra} \vph_{rr}^{-1} \vph(e_r') 
  + \sum_{i=1}^{r-1} (\sg_{ra}\vph^{ir} - \sum_{j=1}^{r-1} \sg_{ja}\vph^{ij}) 
		\vph(e_i')$, and
$$
\vph^{-1}\tau^* P_a(e_r) 
= \sg_{ra} \vph_{rr}^{-1} e_r' 
  + \sum_{i=1}^{r-1} (\sg_{ra}\vph^{ir} - \sum_{j=1}^{r-1} \sg_{ja}\vph^{ij}) 
		e_i'
$$
on $Y' \sm \Dl'$.
Recall $g_{L_a}(\what e_{ra}, \what e_{ra})(t) = g(P_a(e_r), P_a(e_r))(t)$, 
and $g(P_a(e_r), P_a(e_r))(t)
	= g'(\vph_{t'}^{-1}\tau^* P_a(e_r), 
				\vph_{t'}^{-1}\tau^* P_a(e_r))(t')$
by Lemma \ref{isometry}. 
We set $s_r = \sg_{ra}(t') \vph_{rr}^{-1}(t')$ and
$s_i = \sg_{ra}(t')\vph^{ir}(t') - \sum_{j=1}^{r-1} \sg_{ja}(t')\vph^{ij}(t')$
for $1 \le i \le r-1$. 
We obtain a non-zero vector $s = (s_1, \ldots, s_r) \in \BC^r$.
Then
$\vph_{t'}^{-1}\tau^* P_a(e_r) = u_s(t') = \sum_{i=1}^{r} s_i e'_i(t')$
at the $t'$.
Hence it is enough to show $g'(u_s, u_s)(t') \ge N(2C_\Phi)^{-2}$.

(2)
We claim that $|s|^2 := \sum_{i=1}^r |s_i|^2 \ge (2C_\Phi)^{-2}$.
This claim, combined with Lemma \ref{Ft12}, implies that
$g'(u_s, u_s)(t') = |s|^2 g'(u_{s/|s|}, u_{s/|s|})(t') 
\ge |s|^2 N \ge N(2C_\Phi)^{-2}$.

(3)
We prove the claim in (2).
By using the formula on $\Phi^{-1}$, we have
$$
	s_i = - \sum_{j=1}^{r-1} 
		\left\{ \sg_{ra}(t')\vph_{rr}^{-1}(t')\vph_{jr}(t') 
			+ \sg_{ja}(t') \right\} \vph^{ij}(t')
$$
for $1 \le i \le r-1$. 
We set $v_j = \sg_{ra}(t')\vph_{rr}^{-1}(t')\vph_{jr}(t') + \sg_{ja}(t')$
for $1 \le j \le r-1$. 
Then ${}^t (s_1, \ldots, s_{r-1}) 
= - \Phi_0^{-1}(t') \cdot {}^t(v_1, \ldots, v_{r-1})$, and
$\sum_{i=1}^{r-1}|s_i|^2 = \langle \Phi_0^{-1}(t') v, \Phi_0^{-1}(t') v \rangle
= \langle \Psi(t')^{-1}v, v \rangle$.
Here $v = {}^t(v_1, \ldots, v_{r-1})$, and the bracket $\langle \ \ \rangle$ 
is the standard Hermitian inner product on $\BC^{r-1}$,
and recall $\Psi = \Phi_0 {}^t\ol{\Phi_0}$.
Then
$\langle \Psi(t')^{-1}v, v \rangle \ge \sum_{i=1}^{r-1}|v_i|^2/C_\Phi$
by Lemma \ref{eigen}.

In case $|s_r| \ge (2C_\Phi)^{-1}$, our claim in (2) is clear.
Hence we assume $|s_r| < (2C_\Phi)^{-1}$, namely
$|\sg_{ra}(t')||\vph_{rr}^{-1}(t')| < (2C_\Phi)^{-1}$.
Then $|1 - \sum_{i=1}^{r-1} \sg_{ia}(t')a_i| = 
|\sg_{ra}(t')| < |\vph_{rr}(t')| (2C_\Phi)^{-1}$ $< 1/2$. 
We have at least one $1 \le j \le r-1$ such that 
$|\sg_{ja}(t')||a_j| > 1/(2(r-1))$.
In particular $|\sg_{ja}(t')|> 1/(2(r-1) |a_j|) > 1/(2(r-1) \sqrt{\ep})$.
Then for such $j$,
$|v_j| = |\sg_{ja}(t') + \sg_{ra}(t')\vph_{rr}^{-1}(t')\vph_{jr}(t')|
\ge |\sg_{ja}(t')| - |s_r||\vph_{jr}(t')|
> 1/(2(r-1) \sqrt{\ep}) - (2C_\Phi)^{-1} C_\Phi$.
Using $\ep < (2(r-1))^{-2}$, we have $v_j > 1/\sqrt{2}$.
Then we have $\sum_{i=1}^{r-1}|s_i|^2 
> \sum_{i=1}^{r-1}|v_i|^2/C_\Phi > (2C_\Phi)^{-1}$, 
and hence our claim in (2).
\end{proof}

Thus we have proved all Lemma \ref{bdd3}, Lemma \ref{bdd2},
and hence Theorem \ref{bdd}.


\section{Proof of Theorem {\ref{algebraic}} and Variants} \label{6}

\subsection{Proof of Theorem \ref{algebraic}}

The projectivity assumption on $Y$ is only used to define
the weakly positivity of sheaves.
As we will see in the proof below, it is enough to assume that
$f : X \lra Y$ is a K\"ahler fiber space over 
a smooth projective variety $Y$.

After obtaining Theorem \ref{MT}, the proof is standard and classical.
A minor difficulty in analytic approach will be that the sheaf
$\Rq$ may not be locally free in general.

Let $F$ be, in general, a torsion free coherent sheaf on a smooth
projective variety $Y$, and let $Y_1$ be the maximum Zariski open
subset of $Y$ on which $F$ is locally free.
Let $Y_0$ be a Zariski open subset of $Y$, which is contained in $Y_1$.
The sheaf $F$ is said to be {\it weakly positive} over $Y_0$
in the sense of Viehweg \cite[2.13]{Vi2},
if for any given ample line bundle $A$ on $Y$ and any given positive 
integer $a$, there exists a positive integer $b$ such that
$\what S^{ab}(F) \ot A^{\ot b}$ is generated by global sections 
$H^0(Y, \what S^{ab}(F) \ot A^{\ot b})$ over $Y_0$.
Here $\what S^m(F)$ is the double dual of the $m$-th symmetric 
tensor product $\text{Sym}^m(F)$.
We note \cite[2.14]{Vi2} that this condition does not depend on 
the choice of $A$.
We refer also \cite[V.3.20]{N2}.

Now we turn to our situation in Theorem \ref{algebraic}.
Let us denote by $F = \Rq$ which is a torsion free sheaf on $Y$.
Then by \cite[2.14]{Vi2}, it is enough to show that 
there exists an ample line bundle $A$ on $Y$ 
with the following property:\
for any positive integer $a$, there exists a positive integer $b$ 
such that $\what S^{ab}(F) \ot A^{\ot b}$
is generated by $H^0(Y, \what S^{ab}(F) \ot A^{\ot b})$ 
over $Y \sm \Dl$.

Associated to $F$ on $Y$, we have a scheme 
$\BP(F) = \text{Proj}(\bigoplus_{m \ge 0} \text{Sym}^m(F))$
over $Y$, say $\pi : \BP(F) \lra Y$, and 
the tautological line bundle $\CO(1)$ on $\BP(F)$.
Let $\BP'(F) \lra \BP(F)$ be the normalization of the component
of $\BP(F)$ containing $\pi^{-1}(Y \sm S_q)$, and let
$Z' \lra \BP'(F)$ be a birational morphism from a smooth projective
variety that is an isomorphism over $Y \sm S_q$ (\cite[V.\S 3.c]{N2}).
In particular $\BP(F) \sm \pi^{-1}(S_q)$ is a Zariski open subset
of a smooth projective variety $Z'$, in particular 
it admits a complete K\"ahler metric \cite[0.2]{D82}.
We denote by $Z = \BP(F) \sm \pi^{-1}(S_q)$, and take 
a complete K\"ahler form $\w_Z$ on $Z$.
The volume form will be denoted by $dV$.

We take a very ample line bundle $A$ on $Y$
such that $A \ot K_Y^{-1} \ot (\what \det F)^{-1}$ is ample,
where $\what \det F$ is the double dual of $\bigwedge^r F$
and $r$ is the rank of $F$.
Let $h_{K_Y}$ (resp.\ $h_{\what \det F}$) be a smooth 
Hermitian metric on $K_Y$ (resp.\ $\what \det F$),
and let $h_A$ be a smooth Hermitian metric on $A$
with positive curvature, and such that
$h_A h_{K_Y}^{-1} h_{\what \det F}^{-1}$ has positive curvature too.
Let $a$ be a positive integer.
Then, noting that $\what S^{ab}(F) \ot A^{\ot b}$ is reflexive,
it is enough to show that the restriction map
$$
  H^0(\BP(F) \sm \pi^{-1}(S_q), \CO(ab) \ot \pi^*A^{\ot b})
  \lra 
  H^0(\BP(F_y), (\CO(ab) \ot \pi^*A^{\ot b})|_{\BP(F_y)})
$$
is surjective for any $y \in Y \sm \Dl$ and any integer $b > m+1$,
where $m = \dim Y$.
We now fix $y \in Y \sm \Dl$ and $b > m+1$.

We take general members $s_1$, $\ldots$, $s_m \in H^0(Y, A)$ 
such that the zero divisors $(s_1)_0$, $\ldots$, $(s_m)_0$ are smooth,
and intersect transversally, and such that $y$ is isolated in 
$\bigcap_{i=1}^m (s_i)_0$.
Let $W_y \subset Y \sm \Dl$ be an open neighbourhood of $y$,
which is biholomorphic to a ball in $\BC^m$ of radius 2,
$W_y \cap \bigcap_{i=1}^m (s_i)_0 = \{y\}$, and
$F|_{W_y}$ is trivialized.
Let $\rho \in A^0(Y, \BR)$ be a cut-off function around $y$
such that $0 \le \rho \le 1$ on $W_y$, 
$\Supp \rho \subset W_y$, and $\rho \equiv 1$
on $W_y'$ the ball of radius 1 in $W_y$.
Let $\phi = \log (\sum_{i=1}^m h_A(s_i, s_i))^m \in L^1_{loc}(Y, \BR)$.
Then $h_A^m e^{-\phi}$ is a singular Hermitian metric on $A^{\ot m}$
with semi-positive curvature.

We set 
$$
	L := \CO(ab+r)|_Z \ot \pi^*
		(A^{\ot b} \ot K_Y^{-1} \ot (\what \det F)^{-1})|_Z.
$$
We note $(\CO(ab) \ot \pi^*A^{\ot b})|_Z = K_Z \ot L$.
By Theorem \ref{MT}, $\CO(1)|_Z$ has a singular Hermitian metric 
$g_{\CO(1)}$ with semi-positive curvature.
Then the line bundle $L$ over $Z$ has a singular Hermitian metric
$$
  g_L := g_{\CO(1)}^{ab+r} \pi^*(h_A^{b-m-1} \cdot h_A^m e^{-\phi} 
			\cdot h_A h_{K_Y}^{-1} h_{\what \det F}^{-1})
$$
of semi-positive curvature.
Let $h_L$ be a smooth Hermitian metric on $L$.
Then $g_L$ can be written as $g_L = h_L e^{-\psi}$
for a function $\psi \in L^1_{loc}(Z, \BR)$, 
which is a sum of a smooth function and a plurisubharmonic
function around every point of $Z$.
Let $\ai\rd\rdb \psi = \ai(\rd\rdb \psi)_c + \ai(\rd\rdb \psi)_s$
be the Lebesgue decomposition into
the absolute continuous part $\ai(\rd\rdb \psi)_c$ and 
the singular part $\ai(\rd\rdb \psi)_s$.
We set $c(L, \psi) = \rdb\rd\log h_L + (\rd\rdb \psi)_c$.
Then $\ai c(L, \psi)$ is a semi-positive $(1,1)$-current, because 
it is the absolute continuous part of the curvature current of $g_L$.
We also note that $\ai c(L, \psi) \ge (b-m-1)\ai\,\rdb\rd \log (\pi^*h_A)$.

We take a section 
$\sg \in H^0(\BP(F_y), (\CO(ab) \ot \pi^*A^{\ot b})|_{\BP(F_y)})$,
and take a local extension 
$\sg' \in H^0(\BP(F|_{W_y}), \CO(ab) \ot \pi^*A^{\ot b})$.
We consider $u := \rdb ((\pi^*\rho) \sg') 
= \rdb (\pi^*\rho) \cdot \sg'$,
which can be seen as an $L$-valued $(p,1)$-form on $Z$,
where $p = \dim Z = m+r-1$.
At each point $z \in Z$, we set 
$|u|_{c(L, \psi)}^2(z) 
= \inf \{ \ga \in \BR_{\ge 0} \cup \{+\infty\} ;\ 
|(u, \gb)|^2 \le \ga^2 (\ai c(L, \psi) \Lambda \gb, \gb)$
for any $\gb \in \Omega_{Z, z}^{p,1} \ot L_z \}$ (see \cite[p.\ 468]{D82}).
Here $( \ , \ )$ is the Hermitian inner product of
$\Omega_Z^{p,1} \ot L$ with respect to $\w_Z$ and $h_L$, and
$\Lambda$ is the adjoint of the Lefschetz operator $\w_Z \wed \bullet$. 
Assume for the moment that
$\int_Z |u|_{c(L, \psi)}^2 e^{-\psi} dV < \infty$.
Then by \cite[5.1]{D82}, for $u$ with $\rdb u = 0$ and 
$\int_Z |u|_{c(L, \psi)}^2 e^{-\psi} dV < \infty$,
there exists $v \in L^2_{p,0}(Z, L, loc)$ (an $L$-valued $(p,0)$-form 
on $Z$ with locally square integrable coefficients)
such that $\rdb v = u$ and 
$\int_Z |v|^2 e^{-\psi} dV \le \int_Z |u|_{c(L, \psi)}^2 e^{-\psi} dV$.
Since $u \equiv 0$ on $\pi^{-1}(W_y')$,
$v$ is holomorphic on $\pi^{-1}(W_y')$.
The integrability $\int_Z |v|^2 e^{-\psi} dV < \infty$, 
in particular $\int_{\pi^{-1}(W_y)} |v|^2 e^{-\pi^* \phi} dV < \infty$
ensures $v|_{\BP(F_y)} \equiv 0$.
(In a modern terminology, the multiplier ideal sheaf 
$\CI(\pi^{-1}(W_y), e^{-\psi})$ is the defining ideal sheaf 
$\CI_{\BP(F_y)}$ of the fiber.) \
Then $\wtil \sg := (\pi^*\rho)\sg' - v \in H^0(Z, K_Z \ot L)$
and $\wtil \sg|_{\BP(F_y)} = \sg'|_{\BP(F_y)} = \sg$.

Let us see the integrability
$\int_Z |u|_{c(L, \psi)}^2 e^{-\psi} dV < \infty$.
Because $\Supp u \subset \pi^{-1}(W_y \sm W_y')$,
and $\psi$ is smooth on $\pi^{-1}(W_y \sm W_y')$,
it is enough to check that 
$|u|_{c(L, \psi)}^2 < \infty$ on $\pi^{-1}(W_y \sm W_y')$.
Let us take $z_0 \in Z$ such that $y_0 = \pi(z_0) \in W_y \sm W_y'$.
Let $(U, (z^1, \ldots, z^p))$ be a local coordinate centered at $z_0$
such that $dz^1, \ldots, dz^p$ form an orthonormal basis of $\Om^1_{Z}$
at $z_0$ so that 
$\w_Z = \frac{\ai}2 \sum_{i=1}^p dz^i \wed d\ol z^i$ at $z_0$.
Let $(y^1, \ldots, y^m)$ be a local coordinate centered at $y_0$.
We will use indexes $i, j$ (resp.\ $k, \ell$) for
$1, \ldots, p$ of $z^i$ (resp.\ $1, \ldots, m$ of $y^k$).
We have $\pi^*(dy^k) = \sum_{i=1}^p c^k_i dz^i$ at $z_0$, where
$c^k_i = \frac{\rd y^k}{\rd z^i}(z_0)$, and 
$\pi^*(\rdb \rho) 
= \sum_i (\sum_k \rho_{\ol k} \ol{c^k_i}) d\ol z^i$ at $z_0$,
where $\rho_{\ol k} = \frac{\rd \rho}{\rd \ol y^k}(y_0)$.
The canonical bundle $K_Z$ is trivialized by $dz = dz^1 \wed \ldots \wed dz^p$.
We take a nowhere vanishing section $e \in H^0(U, L)$ such that
$h_L(e,e)(z_0) = 1$.
Then we can write as
$u = \pi^*(\rdb \rho) \wed s dz \ot e$ with some $s \in H^0(U, \CO_Z)$.
We write the curvature form of $h_A$ as $\ai \Th_A 
= \frac{\ai}2 \sum_{k,\ell} a_{k\ol \ell} dy^k \wed d\ol y^\ell$
at $y_0$.
Then $\ai \pi^*\Th_A 
= \frac{\ai}2 \sum_{i,j} 
  (\sum_{k,\ell} a_{k\ol \ell} c^k_i \ol{c^\ell_j}) dz^i \wed d\ol z^j$
at $z_0$.
Let $\gb \in \Om^{p,1}_{Z,z_0} \ot L_{z_0}$, which is written as
$\gb = (\sum_i b_i dz \wed d\ol z^i) \ot e$.
We set $b^k = \sum_i c^k_i b_i$ for $1 \le k \le m$.
Since $\ai c(L, \psi) \ge \ai \pi^*\Th_A$, we have
$(\ai c(L, \psi) \Lambda \gb, \gb) 
\ge (\ai \pi^*\Th_A \Lambda \gb, \gb)
= 2^{p+1} \sum_{k, \ell} a_{k \ol \ell} b^k \ol{b^\ell}$.
Let $\lambda_1 > 0$ be the smallest eigenvalue of the positive matrix
$(a_{k \ol \ell})_{k, \ell}$.
Then $\sum_{k, \ell} a_{k \ol \ell} b^k \ol{b^\ell} 
\ge \lambda_1 \sum_{k} |b^k|^2$.
On the other hand 
$|(u, \gb)|^2 
= |(\sum_i (\sum_k \rho_{\ol k} \ol{c^k_i}) d\ol z^i \wed s dz \ot e, 
	(\sum_i b_i dz \wed d\ol z^i) \ot e)|^2
= (2^{p+1})^2 |s|^2 |\sum_k \rho_{\ol k} \ol{b^k}|^2$,
%
and we have
$|(u, \gb)|^2 \le 
(2^{p+1})^2 |s|^2 \sum_k |\rho_{\ol k}|^2 \sum_k |\ol{b^k}|^2$. 
Then
$|(u, \gb)|^2 \le
2^{p+1} \lambda_1^{-1} |s|^2 \sum_k |b^k|^2
(\ai c(L, \psi) \Lambda \gb, \gb)$.
We finally have
$$
|u|_{c(L, \psi)}^2(z_0) 
\le 2^{p+1} \lambda_1^{-1} |s|^2 \sum_k |b^k|^2 < \infty.
$$
Then the proof is complete.
\qed

\subsection{Variants} \label{Variants}

We shall give some variants of the results in the introduction.
In Theorem \ref{MT}\,(1), we need to restrict ourselves on 
a relatively compact subset $Y_0 \subset Y$
(see the proof of Lemma \ref{decomp} for the reason).
We remove it in some cases.

\begin{variant}
Let $f : X \lra Y$ be a proper surjective morphism with connected fibers
between smooth algebraic varieties, and let $(E, h)$ be 
a Nakano semi-positive holomorphic vector bundle on $X$.
Then the line bundle $\CO(1)$ 
for $\pi : \BP(\Rq|_{Y \sm S_q}) \lra Y \sm S_q$ 
has a singular Hermitian metric with semi-positive curvature, 
and which is smooth on $\pi^{-1}(Y \sm \Dl')$
for a closed algebraic subset $\Dl' \subsetneq Y$.
\end{variant}

\begin{proof}
By Chow lemma \cite[II.Ex.4.10]{Ha}, there exists a modification 
$\mu : X ' \lra X$ from a smooth algebraic variety $X'$ such that
$f' := f \circ \mu : X' \lra Y$ becomes projective.
Moreover by Hironaka, we may assume 
$\Supp {f'}^{-1}(\Dl')$ is simple normal crossing.
Here $\Dl' \subset Y$ is the discriminant locus of $f'$,
which $\Dl'$ may be larger than $\Dl$ for $f$.
Since a projective morphism is K\"ahler (\cite[6.2.i]{Tk}), 
we can take a relative K\"ahler form $\w_{f'}$ for $f'$.
We then have a Hodge metric on $\Rq|_{Y \sm \Dl'}$ with respect to
$\w_{f'}$ and $\mu^*h$.
The rest of the proof is the same as Theorem \ref{MT}, 
after Proposition \ref{MT'}.
\end{proof}

\begin{variant} \label{q=0}
Let $f : X \lra Y$ and $(E, h)$ be as in Set up \ref{basic},
and let $q = 0$.
Then, the line bundle $\CO(1)$ for 
$\pi : \BP(f_*(K_{X/Y} \ot E)|_{Y \sm S_0}) \lra Y \sm S_0$ 
has a singular Hermitian metric $g_{\CO(1)}$ 
with semi-positive curvature,
and whose restriction on $\pi^{-1}(Y \sm \Dl)$ is the quotient metric 
$\go$ of $\pi^*g$,
where $g$ is the Hodge metric with respect to $h$.
\end{variant}

\begin{proof}
In case $q = 0$, we have the Hodge metric $g$ on 
$f_*(K_{X/Y} \ot E)|_{Y \sm \Dl}$ 
with respect to $h$, which does not depend on a relative K\"ahler form.
This Hodge metric does not change, even if we take a modification 
$\mu : X' \lra X$ 
(more precisely, for any relatively compact open subset $Y_0 \subset Y$
and a modification $\mu : X_0' \lra X_0 = f^{-1}(Y_0)$)
which is biholomorphic over $X \sm f^{-1}(\Dl)$.
Once a global metric is obtained, the extension problem is 
a local issue.
Hence it is reduced to see that on every small coordinate neighbourhood
$Y_0 \subset Y$, $\go|_{\pi^{-1}(Y_0 \sm \Dl)}$ extends 
as a singular Hermitian metric 
on $\CO(1)|_{\pi^{-1}(Y_0)}$ with semi-positive curvature.
As we saw in the proof of Theorem \ref{MT}, this is reduced
to Theorem \ref{bdd} (or Theorem \ref{MT} itself).
\end{proof}

We have the following standard consequence of our theorems.
Corollary \ref{quotient} can be also formulated under other assumptions
as in two variants above. 
We left it for the readers.

\begin{cor} \label{quotient}
Let $f : X \lra Y$, $(E, h)$ and $0 \le q \le n$ be as 
in Set up \ref{basic}.
Let $L$ be a holomorphic line bundle on $Y$ with a surjection
$\Rq|_{Y \sm Z} \lra L|_{Y \sm Z}$ on the complement
of a closed analytic subset $Z \subset Y$ of $\codim_Y Z \ge 2$.

(1) Unpolarized case.
For every relatively compact open subset $Y_0 \subset Y$,
$L|_{Y_0}$ has a singular Hermitian metric with semi-positive curvature. 

(2) Polarized case.
Assume the simple normal crossing condition in Theorem \ref{MT}\,(2), 
and let $\w_f$ be a relative K\"ahler form  for $f$.
Then $L$ has a singular Hermitian metric with semi-positive curvature,
whose restriction on $Y \sm \Dl$ is the quotient metric of 
the Hodge metric $g$ on $\Rq|_{Y \sm \Dl}$ with respect to $\w_f$ and $h$.
\end{cor}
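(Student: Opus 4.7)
The surjection $\pi_L : \Rq|_{Y\sm Z} \lra L|_{Y\sm Z}$ restricts to the Zariski open set $Y^* := Y \sm (Z \cup S_q)$, on which $F := \Rq$ is locally free (by Set up \ref{basic}\,(2)). By the universal property of the tautological quotient on $\BP(F|_{Y^*})$, this surjection defines a holomorphic section $s_L : Y^* \lra \BP(F|_{Y^*})$ of the projection $\pi : \BP(F|_{Y^*}) \lra Y^*$, such that $s_L^* \CO(1) = L|_{Y^*}$. The plan is simply to pull back the singular Hermitian metric produced by Theorem \ref{MT} on $\CO(1)$ via $s_L$, and then to extend across the codimension-two locus $Z \cup S_q$.

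For assertion (1), fix a relatively compact open subset $Y_0 \subset Y$. Theorem \ref{MT}\,(1) provides a singular Hermitian metric $g_{\CO(1)}$ on $\CO(1)|_{\pi^{-1}(Y_0 \sm S_q)}$ with semi-positive curvature, smooth on $\pi^{-1}(Y_0 \sm \Dl)$. I pull it back by $s_L|_{Y_0 \cap Y^*}$ to obtain a singular Hermitian metric $s_L^* g_{\CO(1)}$ on $L|_{Y_0 \sm (Z \cup S_q)}$; its local weights are plurisubharmonic functions composed with the holomorphic map $s_L$, hence plurisubharmonic, and they are not identically $-\infty$ since $g_{\CO(1)}$ is smooth and non-degenerate on the dense open subset $\pi^{-1}(Y_0 \sm \Dl)$. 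Because $\codim_Y (Z \cup S_q) \ge 2$, the classical Hartogs/Riemann type extension theorem for plurisubharmonic functions extends these weights uniquely across $Z \cup S_q$, yielding the desired singular Hermitian metric on $L|_{Y_0}$ with semi-positive curvature.

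For assertion (2), the same construction applies globally on $Y$ using Theorem \ref{MT}\,(2) in place of (1). In addition, since $g_{\CO(1)}$ restricts on $\pi^{-1}(Y \sm \Dl)$ to the quotient metric $\go$ of $\pi^*g$ induced by the tautological surjection $\pi^*F \lra \CO(1)$, compatibility of the two quotient constructions shows that $s_L^* g_{\CO(1)}$ coincides on $Y \sm (\Dl \cup Z)$ with the quotient metric of the Hodge metric $g$ via $\pi_L$, and this quotient metric extends across $Z$ (of $\codim_Y Z \ge 2$) by the same Hartogs type argument.

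I do not expect a genuine obstacle: all the analytic substance is already carried by Theorem \ref{MT}, and the present statement is the tautological descent of the positivity from the projective bundle to a fixed rank-one quotient. The only points to be careful about are the non-degeneracy needed to invoke the codimension-two plurisubharmonic extension (which is supplied by the smoothness of $g_{\CO(1)}$ over $Y \sm \Dl$) and the verification that the pullback of the quotient metric along $s_L$ is indeed the quotient metric defined by $\pi_L$, which is a pointwise linear-algebra check on the fibers of $F$.
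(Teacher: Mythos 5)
Your proposal is correct and follows essentially the same route as the paper's proof: represent the quotient $\pi_L$ by a section $s_L$ of $\BP(F)$ with $s_L^*\CO(1)\cong L$, pull back the singular Hermitian metric on $\CO(1)$ furnished by Theorem \ref{MT}, identify $s_L^*\go$ with the quotient metric of the Hodge metric by $\pi_L$, and extend across the codimension $\ge 2$ set $Z\cup S_q$ by Hartogs type extension of plurisubharmonic weights. The only cosmetic difference is that the paper first treats the case $S_q=Z=\emptyset$ and then removes that assumption, whereas you work directly on $Y\sm(Z\cup S_q)$; the content is identical.
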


\begin{proof}
(1)
Denote by $F = \Rq$.
We put a Hermitian metric $g$ on $F|_{Y_0 \sm \Dl}$
as in Proposition \ref{MT'}.
Assume for the moment $S_q = Z = \emptyset$.
Then the line bundle $L$ corresponds to a section $s : Y \lra \BP(F)$ of 
$\pi : \BP(F) \lra Y$ such that $L \cong s^*\CO(1)$.
Moreover the Hodge metric $g$ on $F|_{Y_0 \sm \Dl}$ induces 
a quotient metric $g_L^\circ$ 
(resp.\ $\go$) of $L|_{Y_0 \sm \Dl}$ by quotient $F \lra L$ 
(resp.\ $\CO(1)|_{\pi^{-1}(Y \sm \Dl)}$ by $\pi^*F \lra \CO(1)$),
and $g_L^\circ = s^*\go$ over $Y_0 \sm \Dl$ by the definition.
Let $g_{\CO(1)}$ be the extension of $\go$ as a singular Hermitian 
metric on $\CO(1)|_{\pi^{-1}(Y_0)}$ with semi-positive curvature.
Then $g_L = s^* g_{\CO(1)}$ over $Y_0$
is a (unique) extension of $g_L^\circ$ with semi-positive curvature.

In case $S_q \cup Z$ may not be empty, by virtue of Hartogs type 
extension as in the proof of Theorem \ref{MT},
we can extend further the singular Hermitian metric $g_L$ 
on $L|_{Y_0 \sm (S_q \cup Z)}$ with semi-positive curvature 
as a singular Hermitian metric on 
$L|_{Y_0}$ with semi-positive curvature.
(2) is similar.
\end{proof}





\baselineskip=14pt

\vskip 10pt

Christophe Mourougane

\vskip 5pt

Institut de Recherche Math\'ematique de Rennes

Campus de Beaulieu

35042 Rennes cedex, France

e-mail: christophe.mourougane@univ-rennes1.fr

\vskip 10pt

Shigeharu Takayama

\vskip 5pt

Graduate School of Mathematical Sciences

University of Tokyo

3-8-1 Komaba, Tokyo

153-8914, Japan

e-mail: taka@ms.u-tokyo.ac.jp

\end{document}